\begin{document}

\theoremstyle{definition}
\newtheorem{theorem}{Theorem}
\newtheorem{definition}[theorem]{Definition}
\newtheorem{problem}[theorem]{Problem}
\newtheorem{assumption}[theorem]{Assumption}
\newtheorem{corollary}[theorem]{Corollary}
\newtheorem{proposition}[theorem]{Proposition}
\newtheorem{example}[theorem]{Example}
\newtheorem{lemma}[theorem]{Lemma}
\newtheorem{observation}[theorem]{Observation}
\newtheorem{fact}[theorem]{Fact}
\newtheorem{question}[theorem]{Open Question}
\newtheorem{conjecture}[theorem]{Conjecture}
\theoremstyle{theorem}
\newtheorem{addendum}[theorem]{Addendum}
\newtheorem{remark}[theorem]{Remark}
\newcommand{\uint}{{[0, 1]}}
\newcommand{\Cantor}{{\{0,1\}^\mathbb{N}}}
\newcommand{\name}[1]{\textsc{#1}}
\newcommand{\id}{\textrm{id}}
\newcommand{\dom}{\operatorname{dom}}
\newcommand{\Baire}{{\mathbb{N}^\mathbb{N}}}
\newcommand{\hide}[1]{}
\newcommand{\mto}{\rightrightarrows}
\newcommand{\Sierp}{Sierpi\'nski }
\newcommand{\C}{\textrm{C}}
\newcommand{\VC}{\textrm{VC}}
\newcommand{\UC}{\textrm{UC}}
\newcommand{\lpo}{\textrm{LPO}}
\newcommand{\llpo}{\textrm{LLPO}}
\newcommand{\leqW}{\leq_{\textrm{W}}}
\newcommand{\leqsW}{\leq_{\textrm{sW}}}
\newcommand{\leW}{<_{\textrm{W}}}
\newcommand{\equivW}{\equiv_{\textrm{W}}}
\newcommand{\equivT}{\equiv_{\textrm{T}}}
\newcommand{\geqW}{\geq_{\textrm{W}}}
\newcommand{\pipeW}{|_{\textrm{W}}}
\newcommand{\nleqW}{\nleq_\textrm{W}}
\newcommand{\argmin}{\operatorname{argmin}}
\newcommand{\Sort}{\operatorname{Sort}}
\newcommand{\range}{\operatorname{ran}}
\newcommand{\embeds}{\hookrightarrow}
\newcommand{\NN}{{\mathbb{N}}}
\newcommand{\PU}{{\mathbf{K}}}

\newcommand{\XX}{\mathbf{X}}
\newcommand{\YY}{\mathbf{Y}}
\newcommand{\Smin}{{\mathbf{S}_{\mathrm{min}}}}
\newcommand{\Tmin}{{\mathbf{T}_{\mathrm{min}}}}
\newcommand{\leqWcont}{\leq_{\mathrm{W}}^{\mathrm{t}}}
\newcommand{\leqsWcont}{\leq_{\mathrm{sW}}^{\mathrm{t}}}
\newcommand{\leWcont}{<_{\mathrm{W}}^{\mathrm{t}}}
\newcommand{\equivWcont}{\equiv_{\mathrm{W}}^{\mathrm{t}}}

\newcommand\tboldsymbol[1]{%
\protect\raisebox{0pt}[0pt][0pt]{%
$\underset{\widetilde{}}{\boldsymbol{#1}}$}\mbox{\hskip 1pt}}
\newcommand{\bolds}{\tboldsymbol{\Sigma}}
\newcommand{\boldp}{\tboldsymbol{\Pi}}
\newcommand{\boldd}{\tboldsymbol{\Delta}}
\newcommand{\boldg}{\tboldsymbol{\Gamma}}

\newcommand{\todo}[2]{{\bf TODO } by {\bf #1}: \emph{#2} {\bf end}}
\newcommand{\comment}[2]{{\bf #1}: \emph{#2} {\bf end}}

\title{Overt choice}

\author{
Matthew de Brecht
\institute{Graduate School of Human and Environmental Studies\\ Kyoto University, Japan\\}
\email{matthew@i.h.kyoto-u.ac.jp}
\and
Arno Pauly
\institute{Swansea University\\Swansea, UK\\ \& \\ University of Birmingham\\ Birmingham, UK\\}
\email{Arno.M.Pauly@gmail.com}
\and
Matthias Schr\"oder
\institute{TU
 Darmstadt, Darmstadt, Germany}\email{Matthias.Schroeder@cca-net.de}
}

\def\titlerunning{Overt choice}
\def\authorrunning{M.~de Brecht, A. Pauly \& M.~Schr\"oder}
\maketitle

\begin{abstract}
We introduce and study the notion of \emph{overt choice} for countably-based spaces and for CoPolish spaces. Overt choice is the task of producing a point in a closed set specified by what open sets intersect it. We show that the question of whether overt choice is continuous for a given space is related to topological completeness notions such as the Choquet-property; and to whether variants of Michael's selection theorem hold for that space. For spaces where overt choice is discontinuous it is interesting to explore the resulting Weihrauch degrees, which in turn are related to whether or not the space is Fr\'echet-Urysohn.
\end{abstract}

{\renewcommand*{\thefootnote}{*} \footnotetext{\noindent\begin{minipage}{0.1\textwidth}\includegraphics[width=\textwidth]{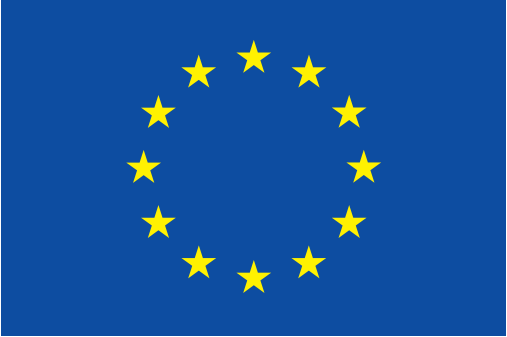}\end{minipage} \begin{minipage}{0.8\textwidth} This project has received funding from the European Union�s Horizon 2020 research and innovation programme under the Marie Sklodowska-Curie grant agreement No 731143, \emph{Computing with Infinite Data}.\end{minipage}

The first author was supported by JSPS Core-to-Core Program, A. Advanced Research Networks and by JSPS KAKENHI Grant Number 18K11166.}}

\section{Introduction}
Let us assume that we have the ability to recognize, given an open predicate, there exists a solution satisfying that predicate. Under which conditions does this suffice to actually obtain a solution? This idea is formalized in the notion of \emph{overt choice}. In this paper, we investigate overt choice under the assumption that the set of solutions is a topologically closed set, although we will often omit the word ``closed'' for simplicity.

On the one hand, studying overt choice is a contribution to (computable) topology. Overt choice for a given space being computable/continuous is a completeness notion, which in the metric case coincides with being Polish. Understanding in more generality for what spaces overt choice is continuous will aid us in extending results and constructions from Polish spaces to more general classes of spaces. On the other hand, the degrees of non-computability of choice principles have turned out to be an extremely useful scaffolding structure in the Weihrauch lattice. Studying the Weihrauch degrees of overt choice in spaces where this is not computable reveals more about hitherto unexplored regions of the Weihrauch lattice.

Overtness is the often overlooked dual notion to compactness. A subset of a space is overt, if the set of open subsets intersecting it is itself an open subset of the corresponding hyperspace. Equivalently, if existential quantification over the set preserves open predicates. Since in classical topology, arbitrary unions of open sets are open, overtness becomes trivial. In constructive or synthetic topology, however, it is a core concept. Even classically, though, we can make sense of the space $\mathcal{V}(\mathbf{X})$ of (topologically closed) overt subsets of a given space $\mathbf{X}$. The space $\mathcal{V}(\mathbf{X})$ is isomorphic to the hyperspace of closed sets with the positive information topology (equivalently, the sequentialization of the lower Vietoris or lower Fell topology). Overt choice for $\mathbf{X}$, which we denote $\VC_\mathbf{X}$, is just the task of producing an element of a given non-empty set $A \in \mathcal{V}(\mathbf{X})$.

Since the lower-semicontinuous closed-valued functions into $\mathbf{X}$ are equivalent to the continuous functions into $\mathcal{V}(\mathbf{X})$, the continuity of $\VC_\mathbf{X}$ gives rise to variants of Michael's selection theorem \cite{michael}. We can thus view the question of what spaces make overt choice continuous as asking about for which spaces Michael's selection theorem holds.

\paragraph*{Our contributions}
We generalize the known result that overt choice is computable for computable Polish spaces to computable quasi-Polish spaces (Theorem \ref{theo:overtchoicequasipolish}). Since the latter notion is not yet fully established, we first investigate a few candidate definitions for effectivizing the notion of a quasi-Polish space, and show that the candidate definitions fall into two equivalence classes, which we then dub \emph{precomputably quasi-Polish}(Definition \ref{def:precomputableQP}) and \emph{computably quasi-Polish} (Definition \ref{def:computableqp}).

As a partial converse, we show that for countably-based $T_1$-spaces the continuity of overt choice is equivalent to being quasi-Polish (Corollary \ref{corr:overtchoiceqpcharac}). In Section \ref{sec:other} we then explore overt choice for several canonic examples of countably-based yet not quasi-Polish spaces, and study the Weihrauch degrees of overt choice for these.

Besides countably-based spaces, we also investigate CoPolish spaces (Section \ref{sec:copolish}). We see that overt choice is continuous for a CoPolish space iff that space is actually countably-based. Moreover, the topological Weihrauch degree of overt choice on a CoPolish space is always comparable with $\lpo$, and whether it is above or strictly below $\lpo$ tells us whether the space has the Fr\'echet-Urysohn property. These results are summarized in Corollary \ref{corr:copolishsummary}.

\section{Background on represented spaces and Weihrauch degrees}

\subsection{Represented spaces and synthetic topology}
The formal setting for our investigation will be the category of represented spaces \cite{pauly-synthetic}, which is commonly used in computable analysis. It constitutes a model for synthetic topology in the sense of Escard\'o \cite{escardo}. We will contend ourselves with giving a very brief account of the essential notions for our purposes, and refer to \cite{pauly-synthetic} for more details and context. In the area of computable analysis, most of the following were first obtained in \cite{schroder5}.

\begin{definition}
A \emph{represented space} is a pair $(X,\delta)$ of a set $X$ and a partial surjection $\delta : \subseteq \Baire \to X$. We commonly write $\mathbf{X}$, $\mathbf{Y}$, etc, for represented spaces $(X,\delta_X)$, $(Y,\delta_Y)$. A (multivalued) function between represented spaces is a (multivalued) function between the underlying sets. A partial function $F : \subseteq \Baire \to \Baire$ is a \emph{realizer} of $f : \subseteq \mathbf{X} \mto \mathbf{Y}$ (written $F \vdash f$) if $\delta_Y(F(p)) \in f(\delta_X(p))$ for all $p \in \dom(f \circ \delta_X)$. We call $f : \subseteq \mathbf{X} \mto \mathbf{Y}$ \emph{computable} respectively \emph{continuous} if it has a computable respectively continuous realizer.
\end{definition}

By the grace of the UTM-theorem, the category of represented spaces and continuous functions is cartesian-closed, i.e.~we have a function space construction -- that even makes all the expected operations computable. We denote the space of continuous functions from $\mathbf{X}$ to $\mathbf{Y}$ by $\mathcal{C}(\mathbf{X},\mathbf{Y})$. A special represented space of significant relevance is Sierpi\'nski space $\Sigma$, having the two elements $\top$ and $\bot$ and represented via $\delta_\Sigma : \Baire \to \{\top,\bot\}$ where $\delta_\Sigma^{-1}(\{\bot\}) = \{0^\omega\}$.

We obtain a space $\mathcal{O}(\mathbf{X})$ of subsets of a given space $\mathbf{X}$ by identifying $U \subseteq \mathbf{X}$ with its characteristic function $\chi_U : \mathbf{X} \to \Sigma$. The elements of $\mathcal{O}(\mathbf{X})$ are called \emph{open} sets, which is justified in particular by noting that $\mathcal{O}(\mathbf{X})$ is the final topology induced along $\delta_\mathbf{X}$ by the subspace topology on $\dom(\delta_\mathbf{X})$. The space $\mathcal{A}(\mathbf{X})$ of closed subsets is obtained by identifying a set $A \in \mathcal{A}(\mathbf{X})$ with its complement $(X \setminus A) \in \mathcal{O}(\mathbf{X})$.

The space $\mathcal{O}(\mathbb{N})$ has a particularly nice characterization: Its elements are all subsets of $\mathbb{N}$, and they are represented as enumerations. The topology on $\mathcal{O}(\mathbb{N})$ is thus the Scott topology. We then define the notion of an effective countable basis:

\begin{definition}
$\mathbf{X}$ is effectively countably-based if there is some computable function $B : \mathbb{N} \to \mathcal{O}(\mathbf{X})$ such that the computable function $U \mapsto \bigcup_{n \in U} B(n) : \mathcal{O}(\mathbb{N}) \to \mathcal{O}(\mathbf{X})$ has a computable multi-valued inverse.
\end{definition}



For any subset $A \subseteq \mathbf{X}$, the set  $U_A = \{U \in \mathcal{O}(\mathbf{X}) \mid U \cap A \neq \emptyset\}$ is an open subset of $\mathcal{O}(\mathbf{X})$, hence an element of  $\mathcal{O}(\mathcal{O}(\mathbf{X}))$. The corresponding characteristic function $\chi_{U_A} : \mathcal{O}(\mathbf{X}) \to \Sigma$ is continuous and preserves finite joins between the lattices $\mathcal{O}(\mathbf{X})$ and $\Sigma$ (i.e., $\chi_{U_A}(\emptyset)=\bot$ and $\chi_{U_A}(U\cup V) = \chi_{U_A}(U)\vee \chi_{U_A}(V)$). Conversely, if $\chi : \mathcal{O}(\mathbf{X}) \to \Sigma$ is continuous and preserves finite joins, then by defining $A = X\setminus \bigcup\{U\in \mathcal{O}(\mathbf{X}) \mid \chi(U)=\bot\}$, we see that $\chi$ is the characteristic function of the open subset $\{U \in \mathcal{O}(\mathbf{X}) \mid U \cap A \neq \emptyset\}$ of $\mathcal{O}(\mathbf{X})$.

We define $\mathcal{V}(\mathbf{X})$ to be the subspace of $\mathcal{O}(\mathcal{O}(\mathbf{X}))$ of join preserving functions in the above sense. Since any $A \subseteq \mathbf{X}$ determines an element $\{U \in \mathcal{O}(\mathbf{X}) \mid U \cap A \neq \emptyset\}$ in $\mathcal{V}(\mathbf{X})$ which encodes the information about which open sets intersect $A$, it is convenient to think of $\mathcal{V}(\mathbf{X})$ as the \emph{space of overt subsets} of $\mathbf{X}$. However, this does not characterize subsets of $\mathbf{X}$ uniquely: For sets $A, B \subseteq \mathbf{X}$ we have that $\{U \in \mathcal{O}(\mathbf{X}) \mid U \cap A \neq \emptyset\} = \{U \in \mathcal{O}(\mathbf{X}) \mid U \cap B \neq \emptyset\}$ if and only if $A$ and $B$ have equal closures in $\mathbf{X}$.

To avoid this ambiguity, we adopt the convention that the elements of $\mathcal{V}(\mathbf{X})$ are encoding topologically closed subsets of $\mathbf{X}$. Under this convention, the space $\mathcal{V}(\mathbf{X})$ is isomorphic to the space of closed subsets of $\mathbf{X}$ with the positive information topology (equivalently, the sequentialization of the lower Vietoris or lower Fell topology). We will, however, sometimes simply refer to the elements of  $\mathcal{V}(\mathbf{X})$ as ``overt'' sets, with the implicit understanding that they are topologically closed.

If $\mathbf{X}$ is effectively countably-based via some $(B_n)_{n \in \mathbb{N}}$, we can conceive of $A \in \mathcal{V}(\mathbf{X})$ as being represented via $\{n \in \mathbb{N} \mid B_n \cap A \neq \emptyset\} \in \mathcal{O}(\mathbb{N})$.

\subsection{Weihrauch degrees}
Weihrauch reducibility is a preorder between multivalued functions on represented spaces. It is a many-one reducibility captures the idea of when $f$ is solvable using computable means and a single application of another principle $g$. Inspired by earlier work by Weihrauch \cite{weihrauchb,weihrauchc} it was promoted as a setting for computable metamathematics in \cite{gherardi,brattka2,brattka3}. A recent survey and introduction is found in \cite{pauly-handbook}, to which we refer for further reading.

\begin{definition}
Let $f : \subseteq \mathbf{X} \mto \mathbf{Y}$ and $g : \subseteq \mathbf{U} \mto \mathbf{V}$ be multi-valued functions between represented spaces. We say that $f$ is Weihrauch reducible to $g$ ($f \leqW g$) if there are computable functions $H, K : \subseteq \Baire \to \Baire$ such that whenever $G \vdash g$, then $K(\langle \id, GH\rangle) \vdash f$. If we can even chose $H,K$ such that $KGH \vdash f$ whenever $G \vdash g$, we have a strong Weihrauch reduction ($f \leqsW g$).
\end{definition}

We write $\leqWcont$ respectively $\leqsWcont$ for the relativized versions (equivalently, the versions where \emph{computable} is replaced by \emph{continuous}). With $f \equivW g$ we abbreviate $f \leqW g \wedge g \leqW f$, with $f \leW g$ we abbreviate $f \leqW g \wedge g \nleqW f$, and $f \pipeW g$ stands in for $f \nleqW g \wedge g \nleqW f$.

The equivalence classes for $\leqW$ are the Weihrauch degrees, which form a distributive lattice. The cartesian product of multivalued functions induces an operation $\times$ on the Weihrauch degrees. We also use the closure operator $\widehat{\phantom{f}}$ which is induced by the lifting of $f : \subseteq \mathbf{X} \mto \mathbf{Y}$ to $\widehat{f} : \subseteq \mathbf{X}^\mathbb{N} \mto \mathbf{Y}^\mathbb{N}$. The operation $f \star g$ captures the idea of first making one call to $g$, and then one call to $f$. As such, $f \star g$ is the maximal Weihrauch degree arising as a composition $f' \circ g'$ where $f' \leqW f$ and $g' \leqW g$. A formal construction is found in \cite{paulybrattka4}.

Many computational tasks that have been classified in the Weihrauch lattice turned out to be equivalent to a \emph{closed choice} principle parameterized by some represented space:

\begin{definition}
\label{def:closedchoice}
For represented space $\mathbf{X}$, let its closed choice $\C_\mathbf{X} : \subseteq \mathcal{A}(\mathbf{X}) \mto \mathbf{X}$ be defined by $A \in \dom(\C_\mathbf{X})$ iff $A \neq \emptyset$ and $x \in \C_\mathbf{X}(A)$ iff $x \in A$.
\end{definition}

The topological Weihrauch degree is $\C_\mathbf{X}$ reflects topological properties of $\mathbf{X}$. For example, for any uncountable compact metric space $\mathbf{X}$ we find that $\C_\mathbf{X} \equivWcont \C_\Cantor$. This is discussed further in \cite{paulybrattka}. The principle $\C_\mathbb{N}$ has the more intuitive characterization of finding a natural number not occurring in an enumeration (that does not exhaust all natural numbers). Both $\C_\Cantor$ and $\C_\Baire$ are about finding infinite paths through ill-founded trees. For $\C_\Cantor$, the tree is binary, whereas for $\C_\Baire$ the tree can be countably-branching.

\section{Fundamentals on overt choice}
As mentioned above, \emph{overt choice} is the task of finding a point in a given overt set. A priori, this is an ill-specified task, as overt sets do not uniquely determine an actual set of points. Our convention that elements of $\mathcal{V}(\mathbf{X})$ are topologically closed does ensure the well-definedness of overt choice. Consequently, it might be more accurate to speak of \emph{closed overt choice}. To keep notation simple, we omit the reminder of our convention in the following.

\begin{definition}
\label{def:overtchoice}
For represented space $\mathbf{X}$, let its overt choice $\VC_\mathbf{X} : \subseteq \mathcal{V}(\mathbf{X}) \mto \mathbf{X}$ be defined by $A \in \dom(\VC_\mathbf{X})$ iff $A \neq \emptyset$ and $x \in \VC_\mathbf{X}(A)$ iff $x \in A$.
\end{definition}

Note the similarity between the definitions of closed choice (Definition \ref{def:closedchoice}) and overt choice (Definition \ref{def:overtchoice}). Given the importance of the former in the study of Weihrauch degrees, this is an argument in favour of exploring the latter notion, too.

We shall observe some basic properties of how overt choice for various spaces is related, similar to the investigation for closed choice in \cite{paulybrattka}.

\begin{proposition}
Let $s : \mathbf{X} \to \mathbf{Y}$ be an effectively open computable surjection. Then $\VC_\mathbf{Y} \leqW \VC_\mathbf{X}$.
\begin{proof}
As $s$ is effectively open, we can compute $s^{-1}(A) \in \mathcal{V}(\mathbf{X})$ from $A \in \mathcal{V}(\mathbf{Y})$. Then $\VC_\mathbf{X}$ can be used to obtain some $x \in s^{-1}(A)$. Computability of $s$ then lets us compute $s(x) \in A$.
\end{proof}
\end{proposition}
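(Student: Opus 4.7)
The plan is to realize the Weihrauch reduction by computing, from a representation of $A \in \mathcal{V}(\mathbf{Y})$, a representation of $s^{-1}(A) \in \mathcal{V}(\mathbf{X})$, then feed that into $\VC_\mathbf{X}$, and finally apply $s$ to the returned point. Note this is in fact a strong Weihrauch reduction. Concretely, given $A$, I want to output $s^{-1}(A)$ as an overt set, use one oracle call to $\VC_\mathbf{X}$ to obtain some $x \in s^{-1}(A)$, and then return $s(x) \in A$, which is computable since $s$ is computable.

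The core step is exhibiting the computable map $s^{-1} : \mathcal{V}(\mathbf{Y}) \to \mathcal{V}(\mathbf{X})$. Since $s$ is continuous, the set-theoretic preimage $s^{-1}(A)$ of a closed set is closed, so the output lies in the correct space. To verify overtness, I would use the characterization from the excerpt: an element of $\mathcal{V}(\mathbf{X})$ corresponds to a continuous, finite-join-preserving map $\mathcal{O}(\mathbf{X}) \to \Sigma$. For $U \in \mathcal{O}(\mathbf{X})$ one has $U \cap s^{-1}(A) \neq \emptyset$ iff $s(U) \cap A \neq \emptyset$; the right-hand side is computable from $U$ since effective openness of $s$ gives a computable $s : \mathcal{O}(\mathbf{X}) \to \mathcal{O}(\mathbf{Y})$, and $A \in \mathcal{V}(\mathbf{Y})$ gives a computable test $V \mapsto [V \cap A \neq \emptyset]$. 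Join preservation transfers because $s(U_1 \cup U_2) = s(U_1) \cup s(U_2)$ and because $A$ is an element of $\mathcal{V}(\mathbf{Y})$.

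Non-emptiness of $s^{-1}(A)$ is where surjectivity of $s$ is used: if $A \neq \emptyset$, pick any $y \in A$ and any $x \in s^{-1}(\{y\})$, and then $x \in s^{-1}(A)$, so $s^{-1}(A) \in \dom(\VC_\mathbf{X})$. Thus $\VC_\mathbf{X}$ returns some $x \in s^{-1}(A)$, and $s(x) \in A$ as required.

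The only real subtlety is confirming that the characteristic map of $s^{-1}(A)$ described above really is the representative of the \emph{closed} set $s^{-1}(A)$ that the convention on $\mathcal{V}(\mathbf{X})$ demands, rather than of some dense subset; but this is automatic because $s^{-1}(A)$ is already closed and the ambiguity addressed in the excerpt only concerns sets that are not closed. Everything else is bookkeeping about composing computable maps, so I expect no obstacle beyond carefully invoking effective openness in the right place.
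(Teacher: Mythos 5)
Your proposal is correct and follows essentially the same route as the paper: use effective openness to compute $s^{-1}(A) \in \mathcal{V}(\mathbf{X})$ (via $U \cap s^{-1}(A) \neq \emptyset \Leftrightarrow s(U) \cap A \neq \emptyset$), apply $\VC_\mathbf{X}$, and push the resulting point forward with the computable $s$. The extra details you supply (join preservation, non-emptiness from surjectivity, closedness of the preimage) are exactly the bookkeeping the paper leaves implicit.
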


\begin{corollary}
If $\mathbf{X}$ and $\mathbf{Y}$ are computably isomorphic, then $\VC_\mathbf{X} \equivW \VC_\mathbf{Y}$.
\end{corollary}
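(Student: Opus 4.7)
The plan is to deduce this corollary immediately from the preceding proposition by applying it twice. Suppose $s : \mathbf{X} \to \mathbf{Y}$ is a computable isomorphism, so both $s$ and $s^{-1}$ are computable (and in particular $s$ is a computable surjection). To invoke the preceding proposition I need only check that $s$ is effectively open, i.e.~that the map $U \mapsto s(U) : \mathcal{O}(\mathbf{X}) \to \mathcal{O}(\mathbf{Y})$ is computable. But for a bijection we have $s(U) = (s^{-1})^{-1}(U)$, and since $s^{-1} : \mathbf{Y} \to \mathbf{X}$ is computable, the preimage map $U \mapsto (s^{-1})^{-1}(U)$ is computable by the standard fact that continuous/computable maps induce computable preimage operations on open sets. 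Hence $s$ is effectively open, and the preceding proposition yields $\VC_\mathbf{Y} \leqW \VC_\mathbf{X}$.

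Symmetrically, the same reasoning applied to $s^{-1}$, which is itself a computable isomorphism from $\mathbf{Y}$ to $\mathbf{X}$, shows that $s^{-1}$ is an effectively open computable surjection, and so the preceding proposition gives $\VC_\mathbf{X} \leqW \VC_\mathbf{Y}$. Combining the two reductions yields $\VC_\mathbf{X} \equivW \VC_\mathbf{Y}$.

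There is essentially no obstacle: the only content beyond citing the previous proposition is the observation that a computable isomorphism is effectively open, which is a direct consequence of the computability of its inverse. If desired one could bypass the proposition entirely and note that the isomorphism induces a computable isomorphism between $\mathcal{V}(\mathbf{X})$ and $\mathcal{V}(\mathbf{Y})$ via the standard functorial action on hyperspaces, making the reduction realizers entirely transparent.
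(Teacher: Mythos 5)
Your proof is correct and matches the paper's intent: the corollary is stated there without proof precisely because it follows by applying the preceding proposition in both directions, and your observation that a computable isomorphism is effectively open (via $s(U)=(s^{-1})^{-1}(U)$ and computability of preimages) is exactly the step that makes this work.
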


\begin{proposition}
Let $\mathbf{X}$ be a computably closed subspace of $\mathbf{Y}$. Then $\VC_\mathbf{X} \leqW \VC_\mathbf{Y}$.
\begin{proof}
Under the given conditions, we can compute $\id : \mathcal{V}(\mathbf{X}) \to \mathcal{V}(\mathbf{Y})$, which yields the claim. To see this, we just need to note that $U \mapsto U \cap X : \mathcal{O}(\mathbf{Y}) \to \mathcal{O}(\mathbf{X})$ is computable, and that for $A \subseteq \mathbf{X}$ we have that $U \cap A \neq \emptyset$ iff $(U \cap X) \cap A \neq \emptyset$.
\end{proof}
\end{proposition}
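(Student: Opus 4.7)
The plan is to reduce $\VC_\mathbf{X}$ to $\VC_\mathbf{Y}$ by feeding $\VC_\mathbf{Y}$ exactly the same overt set $A$, now viewed as a subset of the larger space. Concretely, I want to show that the identity map on underlying point sets lifts to a computable map $\id : \mathcal{V}(\mathbf{X}) \to \mathcal{V}(\mathbf{Y})$. If this succeeds, then from an input $A \in \mathcal{V}(\mathbf{X})$ we compute its image in $\mathcal{V}(\mathbf{Y})$, call $\VC_\mathbf{Y}$ once, and return whatever point it produces. Any such point lies in $A \subseteq X$, so it is a valid answer for $\VC_\mathbf{X}(A)$ and no post-processing is needed.

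For the identity map to make sense at the level of $\mathcal{V}$, one needs $A$ to be closed in $\mathbf{Y}$ whenever it is closed in $\mathbf{X}$; this uses precisely the hypothesis that $X$ is (topologically) closed in $\mathbf{Y}$, since the composition of closed inclusions is closed. For the computability of $\id : \mathcal{V}(\mathbf{X}) \to \mathcal{V}(\mathbf{Y})$, recall that an element of $\mathcal{V}(\mathbf{Y})$ is represented by the join-preserving map $V \mapsto [V \cap A \neq \emptyset] : \mathcal{O}(\mathbf{Y}) \to \Sigma$. Given $V \in \mathcal{O}(\mathbf{Y})$, compute $V \cap X \in \mathcal{O}(\mathbf{X})$ (this is computable from the subspace structure, which is the standard content of $\mathbf{X}$ being a subspace of $\mathbf{Y}$), and then use the input name of $A \in \mathcal{V}(\mathbf{X})$ to semidecide $(V \cap X) \cap A \neq \emptyset$. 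Because $A \subseteq X$, we have $V \cap A = V \cap X \cap A$, so these two conditions coincide.

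The only nontrivial point is the computability of $V \mapsto V \cap X : \mathcal{O}(\mathbf{Y}) \to \mathcal{O}(\mathbf{X})$; but this is inherent in being a subspace (it amounts to the fact that the inclusion $\mathbf{X} \hookrightarrow \mathbf{Y}$ is continuous, which pulls opens back to opens computably). Thus I do not anticipate a real obstacle: the entire argument is essentially the observation that intersecting-some-open is a property that transfers from the smaller ambient space to the larger ambient space, and the role of ``computably closed'' is just to guarantee the target of the reduction is a legitimate element of $\mathcal{V}(\mathbf{Y})$ under the convention that its elements are topologically closed.
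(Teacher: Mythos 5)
Your argument is correct and is essentially the paper's proof: you compute $\id : \mathcal{V}(\mathbf{X}) \to \mathcal{V}(\mathbf{Y})$ using the computable map $U \mapsto U \cap X : \mathcal{O}(\mathbf{Y}) \to \mathcal{O}(\mathbf{X})$ and the equivalence $U \cap A \neq \emptyset \Leftrightarrow (U \cap X) \cap A \neq \emptyset$ for $A \subseteq X$, then apply $\VC_\mathbf{Y}$. You additionally make explicit the role of closedness of $X$ in $\mathbf{Y}$ (ensuring $A$ is genuinely closed in $\mathbf{Y}$, so the returned point lies in $A$), which the paper leaves implicit.
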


The requirement of the subspace being computably closed is necessary for the previous proposition to hold. To see this, note that we can adjoin a computable bottom element to an arbitrary represented space, in such a way that the original space is computably open inside the resulting space, and that the only open set containing the bottom element is the entire space. Since every non-empty closed subset of the resulting space contains the bottom element, closed overt choice becomes trivially computable.

\begin{lemma}
The map $\mathbf{\times}\colon \mathcal{V}(\XX) \times
 \mathcal{V}(\YY) \to \mathcal{V}(\XX \times \YY)$
  defined by $(A,B) \mapsto A \times B$ is computable.
 \end{lemma}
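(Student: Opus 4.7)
My plan is to exploit cartesian closure and the description of $\mathcal{V}(\mathbf{X})$ as the join-preserving continuous maps from $\mathcal{O}(\mathbf{X})$ to $\Sigma$. Given $A\in\mathcal{V}(\mathbf{X})$ and $B\in\mathcal{V}(\mathbf{Y})$, I need to compute, uniformly in $A$ and $B$, a join-preserving continuous map $\chi \colon \mathcal{O}(\mathbf{X}\times\mathbf{Y}) \to \Sigma$ satisfying $\chi(W)=\top$ iff $W\cap(A\times B)\neq\emptyset$.

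The key computable step uses the natural isomorphism $\mathcal{O}(\mathbf{X}\times\mathbf{Y}) \cong \mathcal{C}(\mathbf{X}, \mathcal{O}(\mathbf{Y}))$ provided by cartesian closure of the category of represented spaces (viewing $\mathcal{O}(\mathbf{Z})$ as $\mathcal{C}(\mathbf{Z},\Sigma)$ and exponentiating). Under this isomorphism an open $W \subseteq \mathbf{X}\times\mathbf{Y}$ corresponds to the continuous map $f_W \colon \mathbf{X}\to\mathcal{O}(\mathbf{Y})$ given by $f_W(x) = \{y \mid (x,y)\in W\}$. Then I compose with $\chi_B \colon \mathcal{O}(\mathbf{Y})\to\Sigma$ (which is itself continuous and part of the datum of $B$) to obtain a continuous map $\chi_B \circ f_W \colon \mathbf{X}\to\Sigma$, i.e., an open subset $U_W\in\mathcal{O}(\mathbf{X})$ whose points are exactly those $x$ for which $f_W(x)\cap B\neq\emptyset$. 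Finally I define $\chi(W) := \chi_A(U_W)$. All of these operations (exponential transpose, composition, evaluation) are computable by standard cartesian-closure arguments, so the resulting map $(A,B,W)\mapsto\chi(W)$ is computable, which by currying gives a computable map $(A,B)\mapsto\chi\in\mathcal{O}(\mathcal{O}(\mathbf{X}\times\mathbf{Y}))$.

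What remains is to verify two things: correctness and join-preservation. For correctness, $\chi(W)=\top$ iff $U_W \cap A\neq\emptyset$ iff there is $x\in A$ with $f_W(x)\cap B\neq\emptyset$ iff there is $(x,y)\in A\times B$ with $(x,y)\in W$, which is exactly $W\cap(A\times B)\neq\emptyset$. For join-preservation, note that $f_\emptyset$ is the constant map to $\emptyset\in\mathcal{O}(\mathbf{Y})$, so $U_\emptyset=\emptyset$ and $\chi(\emptyset)=\chi_A(\emptyset)=\bot$; and $f_{W_1\cup W_2}(x) = f_{W_1}(x)\cup f_{W_2}(x)$, so using join-preservation of $\chi_B$ we get $U_{W_1\cup W_2} = U_{W_1}\cup U_{W_2}$, and join-preservation of $\chi_A$ then yields $\chi(W_1\cup W_2) = \chi(W_1) \vee \chi(W_2)$. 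Hence $\chi$ lies in $\mathcal{V}(\mathbf{X}\times\mathbf{Y})$ and represents the closed set $A\times B$ (which is already closed since $A$ and $B$ are).

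I do not expect any serious obstacle here; the only subtle point is ensuring that the transpose $W\mapsto f_W$ is genuinely computable in both directions, but this is immediate from the standard cartesian-closed structure on represented spaces recalled in Section~2.1, together with the fact that application and composition of continuous maps are computable.
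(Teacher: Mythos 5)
Your proposal is correct and follows essentially the same route as the paper: your $f_W(x)$ and $U_W$ are exactly the paper's $V_{x,W}$ and $U_{W,B}$, and both arguments reduce the problem to slicing $W$ at each $x$, testing intersection with $B$ to get an open subset of $\XX$, and then testing intersection with $A$. Your explicit verification that the resulting map preserves joins (and hence lands in $\mathcal{V}(\XX\times\YY)$, representing the closed set $A\times B$) is a small addition the paper leaves implicit, but otherwise the two proofs coincide.
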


 \begin{proof}
  For all $x \in \XX$ and all $W \in \mathcal{O}(\XX \times \YY)$
  the set $V_{x,W}:= \{ y \in \YY \,|\, (x,y) \in W\}$ is open
  and the map $(x,W) \mapsto V_{x,W}$ is computable.
By composition, it follows that \[(W,B) \mapsto U_{W,B} := \{ x \in \XX \,|\, B \cap V_{x,W} \neq \emptyset
 \} : \mathcal{O}(\XX \times \YY) \times \mathcal{V}(\YY) \to \mathcal{O}(\XX)\] is well-defined and computable.
  For all $A  \in \mathcal{V}(\XX)$,
  we have $A \cap U_{W,B} \neq \emptyset \Longleftrightarrow (A \times
 B) \cap W \neq \emptyset$.
  Therefore the function $\mathcal{V}(\XX) \times \mathcal{V}(\YY)\times
 \mathcal{O}(\XX \times \YY) \to \Sigma$
  mapping $(A,B,W)$ to $\top$ iff $W$ intersects $A \times B$ is
 computable.
 \end{proof}

 We conclude:

\begin{corollary}
\label{corr:vcproducts}
$\VC_\XX \times \VC_\YY \leqsW \VC_{\XX \times \YY}$.
\end{corollary}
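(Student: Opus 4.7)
The plan is to invoke the lemma just proved as the only real ingredient and then observe that the reduction is strong because no post-processing beyond unpacking is required. Given an input $(A,B) \in \mathcal{V}(\XX) \times \mathcal{V}(\YY)$ for $\VC_\XX \times \VC_\YY$, the forward map $H$ of the reduction uses the computable operation $(A,B) \mapsto A \times B : \mathcal{V}(\XX) \times \mathcal{V}(\YY) \to \mathcal{V}(\XX \times \YY)$ from the preceding lemma to produce a name of the non-empty closed overt set $A \times B \in \mathcal{V}(\XX \times \YY)$.

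Any realizer $G$ of $\VC_{\XX \times \YY}$ applied to this name returns a name of some point $(x,y) \in A \times B$. Since a name of a pair in $\XX \times \YY$ is by definition a name of the pair $(x,y) \in \XX \times \YY$, the backward map $K$ can be taken to be the identity (or at worst a trivial coordinate-pairing rearrangement determined by the representation convention for products). Then $x \in A$ and $y \in B$ show that $(x,y) \in (\VC_\XX \times \VC_\YY)(A,B)$, so $KGH$ realizes $\VC_\XX \times \VC_\YY$ for every realizer $G$ of $\VC_{\XX \times \YY}$. This gives the strong Weihrauch reduction $\VC_\XX \times \VC_\YY \leqsW \VC_{\XX \times \YY}$.

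There is no real obstacle here: the content has been packaged into the lemma, and the role of the corollary is to record the resulting Weihrauch inequality. The only thing to double-check is that the backward map $K$ need not consult the original input, which is what distinguishes $\leqsW$ from $\leqW$; here it clearly does not, since the output of $\VC_{\XX \times \YY}$ on $A \times B$ is already exactly of the form required by $\VC_\XX \times \VC_\YY$ on $(A,B)$.
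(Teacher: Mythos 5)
Your proof is correct and follows essentially the same route as the paper, which states the corollary as an immediate consequence of the computability of $(A,B) \mapsto A \times B : \mathcal{V}(\XX) \times \mathcal{V}(\YY) \to \mathcal{V}(\XX \times \YY)$: pre-process with that map, apply any realizer of $\VC_{\XX \times \YY}$, and the output is already a name of a pair $(x,y)$ with $x \in A$ and $y \in B$, so no post-processing using the original input is needed and the reduction is strong. Your observation that $A \times B$ is non-empty and closed, hence a valid input, is exactly the (implicit) point the paper relies on.
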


\section{Overt choice for quasi-Polish spaces}
Quasi-Polish spaces were introduced in \cite{debrecht6} as a suitable setting for descriptive set theory. They generalize both Polish spaces, which form the traditional hunting grounds of descriptive set theory, as well as $\omega$-continuous domains which had been the focus of previous work to extend descriptive set theory (e.g.~\cite{selivanov3,selivanov6}). Essentially, they are complete countably-based spaces.

\subsection{Defining computable quasi-Polish spaces}
An attractive feature of the class of quasi-Polish spaces is the multitude of very different yet equivalent definitions for it, as demonstrated in \cite{debrecht6}. This makes the task of identifying the \emph{correct} definition of a computable quasi-Polish space challenging, however: It does not suffice to effectivize one definition, but one needs to check to what extent the classically equivalent definitions remain equivalent in the computable setting, and in case they are not all equivalent, to choose which one is the most suitable definition. While we do not explore effectivizations of all characterizations of quasi-Polish spaces here, we exhibit two classes of definitions equivalent up to computable isomorphism, and propose those as \emph{precomputable quasi-Polish spaces} and \emph{computable quasi-Polish spaces}.

The task of effectivizing the definition of a quasi-Polish space has already been considered by V.~Selivanov \cite{selivanov8} and by  M.~Korovina and O.~Kudinov \cite{korovina2}. We discuss the relationship between the various proposals in the remark after Definition~\ref{def:computableqp} below.

\begin{definition}
Given a transitive binary relation $\prec$ on $\mathbb{N}$, we say that $I \subseteq \mathbb{N}$ is a rounded ideal for $\prec$, iff the following are satisfied:
\begin{enumerate}
\item $I\not=\emptyset$
\item $y \in I \wedge x \prec y \Rightarrow x \in I$
\item $x, y \in I \Rightarrow \exists z \in I. \ x \prec z \wedge y \prec z$
\end{enumerate}
Let $\mathrm{RI}(\prec) \subseteq \mathcal{O}(\mathbb{N})$ be the space of rounded ideals of $\prec$ equipped with the subspace topology\footnote{Note that if $\prec$ is actually a (reflexive) partial order, then $\mathrm{RI}(\prec)$ is the set of all ideals of $\prec$ in the usual sense. On the other hand, if $\prec$ is anti-reflexive, then a rounded ideal of $\prec$ will not have a maximal element. }.
 \end{definition}

For $(\mathbb{N},\prec)$, let the extendability predicate $E \subseteq \mathbb{N}$ be defined as $n \in E$ iff there exists a rounded ideal $I \ni n$. This is equivalent to the existence of an infinite increasing chain in $(\mathbb{N},\prec)$ containing $n$.

Recall that a representation $\delta$ of a represented space $\mathbf{X}$ is \emph{effectively fiber-overt}, if $x \mapsto \overline{\delta^{-1}(\{x\})} : \mathbf{X} \to \mathcal{V}(\Baire)$ is computable. This notion is studied in \cite{pauly-kihara-arxiv,paulybrattka4}. It is closely related to the representation being effectively open.

\begin{theorem}
\label{theo:precomputableQP}
The following are equivalent for a represented space $\mathbf{X}$:
\begin{enumerate}
\item There exists a c.e.~transitive relation $\mathalpha{\prec} \subseteq \mathbb{N} \times \mathbb{N}$ such that $\mathbf{X} \cong \mathrm{RI}(\prec)$.
\item $\mathbf{X}$ is computably isomorphic to a $\Pi^0_2$-subspace of $\mathcal{O}(\mathbb{N})$.
\item $\mathbf{X}$ admits an effectively fiber-overt computably admissible representation $\delta$ such that $\dom(\delta) \subseteq \Baire$ is $\Pi^0_2$.
\end{enumerate}
\begin{proof}
If $\prec$ is c.e., then the property of being a rounded ideal is $\Pi^0_2$. Hence, $1$ implies $2$. To see that $2.$ implies $3.$, just take the post-restriction of the standard representation of $\mathcal{O}(\mathbb{N})$ to the relevant subspace, and copy via the isomorphism to $\mathbf{X}$. This preserves effective fiber-overtness and computable admissibility. The non-trivial step is the implication from $3.$ to $1.$


Let $\mathcal{P}_{\mathrm{fin}}(\mathbb{N})$ denote the space of finite subsets of $\mathbb{N}$ (given as unordered tuples). We construct a c.e.~transitive relation $\prec$ on $\mathcal{P}_\mathrm{fin}(\mathbb{N}) \times \mathbb{N}$, but the translation to $\mathbb{N}$ is straight-forward.

A computable realizer of fiber-overtness will, given a name $p \in \dom(\delta)$ and some $w \in \mathbb{N}^*$ confirm if there is some name $q$ extending $w$ with $\delta(p) = \delta(q)$, if this is the case. By observing when this realizer provides its confirmations we obtain a computable function $f\colon \mathbb{N}^* \times \mathbb{N} \to \mathcal{P}_{\mathrm{fin}}(\mathbb{N}^*)$ such that
\begin{enumerate}
\item
$w\in f(u,n)$ implies that if $u$ can be extended to some $p\in \dom(\delta)$, then $w$ can be extended to some $q\in\dom(\delta)$ satisfying $\delta(p)=\delta(q)$
\item
If $p,q\in\dom(\delta)$ and $\delta(p)=\delta(q)$, then for any prefix $w$ of $q$ there is a prefix $u$ of $p$ and $n\in\mathbb{N}$ such that $w\in f(u,n)$
\item
$u \in f(u,n)$
\item
$f(u,n)$ is closed under prefixes
\item
$f(u,n) \subseteq f(u',n)$ whenever $u'$ extends $u$
\item
$f(u,m) \subseteq f(u,n)$ whenever $m \leq n$
\end{enumerate}

Since $\dom(\delta)$ is a $\Pi^0_2$-subset of $\Baire$, we can understand it to be given via a computable function $\lambda : \mathbb{N}^* \to \mathbb{N}$ which is order-preserving (prefix-order on $\mathbb{N}^*$, standard order on $\mathbb{N}$) such that $p \in \dom(\delta)$ iff $\{\lambda(p_{\leq n}) \mid n \in \mathbb{N}\}$ is unbounded.

Now we define $\prec$ on $\mathcal{P}_\mathrm{fin}(\mathbb{N}) \times \mathbb{N}$ as $(A, n) \prec (B, m)$ iff the following all hold:
\begin{enumerate}
\item
$B\not=\emptyset$
\item
$n < m$
\item
$n < \lambda(w)$ for each $w \in B$
\item
$A \subseteq f(u, m)$ for each $u\in B$
\item
Each $w\in \bigcup_{u\in A} f(u,n)$ has an extension $w '\in B$
\end{enumerate}
One easily checks that $\prec$ is a c.e.~transitive relation.

For effectively fiber-overt and computably admissible $\delta$, the map $x \mapsto \{w \in \mathbb{N}^* \mid w\Baire \cap \delta^{-1}(x)\not=\emptyset\} : \mathbf{X} \to \mathcal{O}(\mathbb{N}^*)$ is an embedding.  The map $U \mapsto \{A \in \mathcal{P}_{\mathrm{fin}}(\mathbb{N}^*) \mid \forall w \in A. \, w \in U\} : \mathcal{O}(\mathbb{N}^*) \to \mathcal{O}(\mathcal{P}_{\mathrm{fin}}(\mathbb{N}^*))$ is an embedding, too. To conclude our proof, we show that the range of the composition of these two embeddings coincides with the rounded ideals of $\prec$.

Let $U_x := \{w \in \mathbb{N}^* \mid w\Baire \cap \delta^{-1}(x)\not=\emptyset\}$ and $F_x = \{(A, n) \in \mathcal{P}_{\mathrm{fin}}(\mathbb{N}^*)\times\mathbb{N} \mid A \subseteq U_x\}$. First, we shall see that $F_x$ is indeed a rounded ideal for $\prec$. Clearly $F_x$ is non-empty. If $(B,m) \in F_x$ and $(A,n) \prec (B,m)$, then every $u\in A$ has an extension $u' \in B\subseteq U_x$, hence $(A,n) \in F_x$. Now consider any pair $(A,n), (B,m) \in F_x$. If $A$ and $B$ are both empty, we can choose any $w\in U_x$ with $n+m < \lambda(w)$, and get $(\{w\}, n+m+1)$ as a joint $\prec$-upper bound for $A$ and $B$ in $F_x$. If $A$ or $B$ is non-empty, then we construct a joint $\prec$-upper bound $(C,r)$ in $F_x$ by defining $C$ as a finite set of suitable extensions $w'$ to each $w\in \bigcup_{u\in A\cup B} f(u,n+m)$. To see how this can be done, note that if $w\in f(u,n+m)$ for some $u \in A\cup B$, then since there is a name $p$ of $x$ extending $u$ there must exist a name $q$ of $x$ extending $w$. So we can choose any prefix $w'$ of $q$ long enough that $n+m < \lambda(w')$ and $A\cup B \subseteq f(w', r_w)$ for large enough $r_w$. Taking the set $C$ of prefixes $w'$ chosen in this way and letting $r\in\mathbb{N}$ be larger than all the corresponding $r_w$, we get a joint $\prec$-upper bound for $(A,n)$ and $(B,m)$ satisfying $(C,r) \in F_x$. Therefore, $F_x \in \mathrm{RI}(\prec)$.

It remains to argue that any rounded ideal $F$ for $\prec$ is of the form $F_x$. Given a rounded ideal $F$, consider the set $N \subseteq \Baire$ consisting of those $p$ for which there exists a cofinal $\prec$-chain $(A_i, n_i)_{i \in \mathbb{N}}$ in $F$ where each $A_i$ contains some prefix of $p$. Since $F$ is a non-empty countable ideal, it is clear that at least one cofinal $\prec$-chain exists in $F$. Given such a cofinal chain $(A_i, n_i)_{i \in \mathbb{N}}$, we can assume w.l.o.g. that each $A_i$ is non-empty. Also note that $(n_i)_{i\in\mathbb{N}}$ is a strictly increasing chain, and each $w\in A_i$ has an extension $w' \in A_{i+1}$ satisfying $n_i < \lambda(w')$. It follows that $\emptyset \neq N \subseteq \dom(\delta)$.

Consider $p, q \in N$. Then arbitrarily long prefixes of $p$ and $q$ appear in finite sets occurring in $F$, and these have common upper bounds. The requirement that $(A,n) \prec (B,m)$ and $u \in B$ implies $A\subseteq f(u,m)$ then tells us that every prefix $w$ of $p$ can be extended to some $q_w$ with $\delta(q_w) = \delta(q)$, and every prefix $u$ of $q$ can be extended to some $p_u$ with $\delta(p_u) = \delta(p)$. This shows that for any $U \in \mathcal{O}(\mathbf{X})$ we have that $\delta(p) \in U \Leftrightarrow \delta(q) \in U$. As admissibility implies being $T_0$, we conclude that $\delta(p) = \delta(q)$. It follows that $\{\delta(p) \mid p \in N\}$ is some singleton $\{x\}$. Let $(A,n) \in F$ and $w \in A$. By definition of cofinality, there is some $(B,m) \succ (A,n)$ occurring in a cofinal chain. We know that any $v \in B$ is extendible to a name for $x$, and that $w \in f(v,m)$, hence $w$ is extendible to a name of $x$, too. This shows $F \subseteq F_x$.

To show that $F_x \subseteq F$, assume $(A,n)\in F_x$ and fix any $p\in N$. Since $A\subseteq U_x$ and $\delta(p)=x$, for each $w\in A$ there is a prefix $v$ of $p$ such that $w\in f(v,m)$ for some $m$. Since $p\in N$, using the monotonicity of $f$ and the fact that $A$ is finite, it follows that there is some $(B,m)\in F$ with $n<m$ such that $B$ contains a prefix $v$ of $p$ long enough to satisfy $A\subseteq f(v,m)$. Next let $(B',m')$ be any immediate $\prec$-successor of $(B,m)$ in $F$, and note that every $w\in A$ has an extension $w' \in B'$. Finally, let $(B'',m'')$ be any immediate $\prec$-successor of $(B',m')$ in $F$. Clearly $B''\not=\emptyset$ and $n < m''$ and $n < \lambda(w)$ for each $w\in B''$. Furthermore, $A\subseteq f(u,m'')$ for each $u\in B''$, because each $w\in A$ has an extension $w'\in B'$, $B' \subseteq f(u,m'')$, and $f(u,m'')$ is closed under prefixes. Finally, if $w\in f(u,n)$ for some $u\in A$, then $w\in f(u',n)$ for any extension $u'\in B$ of $u$, hence $w$ has an extension $w'$ in $B''$. Therefore, $(A,n) \prec (B'',m'') \in F$.

We conclude that $F = F_x$, and that $\mathbf{X}$ is computably isomorphic to $\mathrm{RI}(\prec)$.
\end{proof}
\end{theorem}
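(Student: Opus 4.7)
My plan is to prove the cyclic chain $1 \Rightarrow 2 \Rightarrow 3 \Rightarrow 1$, with the first two implications being essentially routine and the third carrying the substantive work.

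For $1 \Rightarrow 2$, I would compute the complexity of the predicate ``$I$ is a rounded ideal for $\prec$'' as a subset of $\mathcal{O}(\mathbb{N})$: non-emptiness is open, while downward closure ($y \in I \wedge x \prec y \Rightarrow x \in I$) and directedness ($\forall x,y \in I \, \exists z \in I \ldots$) are each $\Pi^0_2$ when $\prec$ is c.e., so $\mathrm{RI}(\prec)$ embeds as a $\Pi^0_2$-subspace of $\mathcal{O}(\mathbb{N})$. For $2 \Rightarrow 3$, I would pull back the standard enumeration-based representation of $\mathcal{O}(\mathbb{N})$ to the $\Pi^0_2$-subspace and transport along the isomorphism; that representation is computably admissible and effectively fiber-overt, both properties transfer to subspaces, and intersecting with the (already $\Pi^0_2$) base domain keeps the new domain $\Pi^0_2$ in $\Baire$.

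The implication $3 \Rightarrow 1$ is the substantive one. Given $\delta$, I would construct $\prec$ on $\mathcal{P}_{\mathrm{fin}}(\mathbb{N}^*) \times \mathbb{N}$ (which transfers to $\mathbb{N}$ via a computable bijection). The natural-number coordinate would play a dual role: witnessing progress through the $\Pi^0_2$-structure of $\dom(\delta)$ via a computable level function $\lambda : \mathbb{N}^* \to \mathbb{N}$ (with $p \in \dom(\delta)$ iff $\lambda$ is unbounded along $p$), and bounding the fiber-overt computation observed so far via a computable $f(u,n) \in \mathcal{P}_{\mathrm{fin}}(\mathbb{N}^*)$ recording prefixes witnessed within $n$ steps to be extendable to a name of the same point as $u$. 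I would then declare $(A,n) \prec (B,m)$ when $B$ consists of $f$-reachable extensions of $A$ at level $m > n$ with sufficient $\lambda$-growth enforced on every element of $B$. The intended bijection is $x \mapsto F_x := \{(A,n) \mid A \subseteq U_x\}$, with $U_x$ collecting prefixes of names of $x$.

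The main obstacle will be showing that every rounded ideal $F$ of $\prec$ has the form $F_x$ for a unique $x$, with computable reconstruction in both directions. Extracting $x$ from $F$ requires producing a cofinal $\prec$-chain whose $A$-coordinates carry prefixes of ever-growing $\lambda$-value along some branch of $\mathbb{N}^*$, yielding at least one $p \in \dom(\delta)$; then I would use computable admissibility together with the $f$-reachability built into $\prec$ to argue that any two branches extracted from any two cofinal chains in $F$ are $\delta$-equivalent, giving a well-defined $x$. The final verification $F \supseteq F_x$ will be the most finicky: starting from $(A,n) \in F_x$ and a fixed $p$ in the associated branch, I expect to need iterated passage to immediate $\prec$-successors inside $F$ in order to realize simultaneously the $\lambda$-bounds, the $f$-reachability between $A$ and the chosen successor, and the requirement that $A$-elements have extensions in that successor.
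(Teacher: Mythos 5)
Your proposal follows essentially the same route as the paper's own proof: the same cycle $1 \Rightarrow 2 \Rightarrow 3 \Rightarrow 1$, the same complexity and pullback arguments for the easy implications, and for $3 \Rightarrow 1$ the same construction of $\prec$ on $\mathcal{P}_{\mathrm{fin}}(\mathbb{N}^*) \times \mathbb{N}$ from the fiber-overtness witness $f$ and the $\Pi^0_2$ level function $\lambda$, with the same intended isomorphism $x \mapsto F_x$, the same extraction of $x$ from cofinal chains using admissibility ($T_0$), and the same delicate $F_x \subseteq F$ verification via iterated immediate successors. No substantive deviation or gap relative to the paper's argument.
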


\begin{definition}
\label{def:precomputableQP}
We call a space $\mathbf{X}$ satisfying the equivalent criteria of Theorem \ref{theo:precomputableQP} a \emph{precomputable quasi-Polish space}.
\end{definition}

\begin{theorem}
\label{theo:computableQP}
The following are equivalent for a represented space $\mathbf{X}$:
\begin{enumerate}
\item $\mathbf{X}$ is  precomputably quasi-Polish and effectively separable.
\item $\mathbf{X}$ is  precomputably quasi-Polish and computably overt.
\item $\mathbf{X}$ admits an effectively fiber-overt computably admissible total representation $\delta$.
\item There exists a c.e.~transitive relation $\mathalpha{\prec} \subseteq \mathbb{N} \times \mathbb{N}$ such that $\mathbf{X} \cong \mathrm{RI}(\prec)$, such that $\prec$ has a c.e.~extendability predicate $E$.
\end{enumerate}
\begin{proof}
\begin{description}
\item[$1. \Rightarrow 2.$] Effective separability trivially implies computable overtness.
\item[$2. \Rightarrow 3.$] From Theorem \ref{theo:precomputableQP} we obtain an effectively fiber-overt computably admissible representation $\delta'$ with $\Pi^0_2$-domain. As the preimage of computable overt $\mathbf{X}$ under the effectively fiber-overt $\delta'$, we see that we can also obtain $\overline{\dom(\delta')} \in \mathcal{V}(\Baire)$. We apply Lemma \ref{lemma:trace} trace to obtain computable, computably invertible and effectively open $f : \Baire \to \Baire$ such that $\delta = \delta' \circ f$ is the desired total representation.
\item[$3. \Rightarrow 1.$] Using Theorem \ref{theo:precomputableQP} and that spaces with total representations inherit effective separability from $\Baire$.
\item[$2. \Leftrightarrow 4.$] A space of the form $\mathrm{RI}(\prec)$ is computably overt iff the extendability predicate for $\prec$ is computably enumerable.
\end{description}
\end{proof}
\end{theorem}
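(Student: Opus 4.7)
The plan is to close the loop $1 \Rightarrow 2 \Rightarrow 3 \Rightarrow 1$ and treat $2 \Leftrightarrow 4$ separately. First I would dispatch the routine parts. For $1 \Rightarrow 2$, given a computable dense sequence $(x_n)_{n \in \mathbb{N}}$ witnessing effective separability, one simply enumerates those $n$ for which $x_n \in U$ whenever a basic open $U$ is presented, which establishes computable overtness of $\mathbf{X}$. For $2 \Leftrightarrow 4$, I would invoke Theorem \ref{theo:precomputableQP}(1) to identify $\mathbf{X}$ with some $\mathrm{RI}(\prec)$ for a c.e.~transitive $\prec$; the basic open sets inherited from $\mathcal{O}(\mathbb{N})$ then have the form $U_n = \{I \in \mathrm{RI}(\prec) \mid n \in I\}$, so that computable overtness of $\mathrm{RI}(\prec)$ amounts to being able to enumerate the indices $n$ with $U_n \neq \emptyset$, which by definition is exactly the extendability predicate $E$.

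The heart of the proof is $2 \Rightarrow 3$. Starting from the representation $\delta'$ supplied by Theorem \ref{theo:precomputableQP}(3), I would first note that effective fiber-overtness of $\delta'$ together with computable overtness of $\mathbf{X}$ yields $\overline{\dom(\delta')} \in \mathcal{V}(\Baire)$ computably: one composes the computable map $x \mapsto \overline{(\delta')^{-1}(\{x\})} : \mathbf{X} \to \mathcal{V}(\Baire)$ with $\mathbf{X}$ viewed as a computable point of $\mathcal{V}(\mathbf{X})$ and takes the resulting union in $\mathcal{V}(\Baire)$. Armed with this computable overt set, I would then apply Lemma \ref{lemma:trace} to produce a computable, computably invertible, and effectively open $f : \Baire \to \Baire$ whose range lies inside $\dom(\delta')$ in such a way that $\delta := \delta' \circ f$ is a well-defined total representation. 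Effective openness of $f$ is what carries effective fiber-overtness across the composition, while computable invertibility of $f$ preserves computable admissibility.

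For $3 \Rightarrow 1$, precomputable quasi-Polishness is immediate from Theorem \ref{theo:precomputableQP}(3), since a total representation has the trivially $\Pi^0_2$ domain $\Baire$. Effective separability is then inherited from $\Baire$: the image under the surjective computable $\delta$ of any computable dense sequence (say the eventually-zero sequences) is a computable dense sequence in $\mathbf{X}$.

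The main obstacle I expect is the application of Lemma \ref{lemma:trace} in the $2 \Rightarrow 3$ step. One has to ensure not only that a map $f$ of the stated form exists, but also that the properties guaranteeing $\delta' \circ f$ is still effectively fiber-overt and computably admissible really do follow from the geometric properties of $f$; these structural preservation checks are precisely where the representation-theoretic delicacy is concentrated. A secondary subtlety is the extraction of $\overline{\dom(\delta')}$ as a computable overt subset of $\Baire$: overtness in $\mathbf{X}$ and overtness in $\Baire$ are not interchangeable without a bridge, and the bridge is exactly what effective fiber-overtness supplies.
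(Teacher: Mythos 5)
Your proposal is correct and follows essentially the same route as the paper: the cycle $1 \Rightarrow 2 \Rightarrow 3 \Rightarrow 1$ with the key step $2 \Rightarrow 3$ obtained by computing $\overline{\dom(\delta')} \in \mathcal{V}(\Baire)$ from effective fiber-overtness plus computable overtness and then applying Lemma \ref{lemma:trace}, together with the $\mathrm{RI}(\prec)$/extendability argument for $2 \Leftrightarrow 4$. You merely spell out some of the preservation checks (openness of $f$ for fiber-overtness, computable invertibility for admissibility) that the paper leaves implicit.
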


\begin{definition}
\label{def:computableqp}
We call a space $\mathbf{X}$ satisfying the equivalent criteria of Theorem \ref{theo:computableQP} a \emph{computable quasi-Polish space}.
\end{definition}

\begin{remark}
Criterion 2 of Theorem \ref{theo:precomputableQP} is proposed as the definition of a ``computable quasi-Polish space'' by M.~Korovina and O.~Kudinov in \cite[Definition 7]{korovina2}. We prefer to include computable overtness in the definition of \emph{computable quasi-Polish} (see Theorem \ref{theo:computableQP} and Definition \ref{def:computableqp}) as this property is often useful for applications, and present in all natural examples. This also mirrors the terminology for metric spaces, where a computable Polish space is by definition computably overt, rather than being merely computably completely metrizable. Therefore, our notion of computable quasi-Polish essentially corresponds to an ``effectively enumerable computable quasi-Polish space'' in the terminology of \cite{korovina2}.

V.~Selivanov \cite{selivanov8} suggested the possibility of using an effective version of a \emph{convergent approximation space} \cite{becher}, which is closely related to Criterion 4 of Theorem~\ref{theo:computableQP}.
\end{remark}

\begin{lemma}
\label{lemma:trace}
From non-empty $A \in \boldp^0_2(\Baire)$ and $\overline{A} \in \mathcal{V}(\Baire)$ we can compute $f : \Baire \to \Baire$ with $f[\Baire] = A$, such that $f$ is open and has a computable inverse.
\begin{proof}
We take the $\Pi^0_2$-information about $A$ via some monotone function $\lambda : \mathbb{N}^* \to \mathbb{N}$ such that $p \in A$ iff $\{\lambda(p_\leq n) \mid n \in \mathbb{N}\}$ is unbounded. We take the overt information about $A$ as an enumeration of all $w \in \mathbb{N}^*$ that are extendible to an element of $A$. Call $w \in \mathbb{N}^*$ \emph{productive}, if $\lambda(w) > \lambda(w_{|w|-1})$ and $w$ is enumerated as extending to a member of $A$. Let the empty word $\varepsilon$ be productive by convention. Clearly, we can enumerate all productive words, and each productive word has some productive extensions.

We construct $f$ as the limit of a monotone function $F : \mathbb{N}^* \to \mathbb{N}^*$, which we define in turn by induction on the length of the input. The range of $F$ will be exactly the productive words. Set $F(\varepsilon) = \varepsilon$. If $F(w) = u$, then we search for productive extensions of $u$. We can enumerate those as $(u_n)_{n \in \mathbb{N}}$ (in particular, there are some). We then extend $F(wn) = u_n$. It is straight-forward to check that the construction of $F$ gives the desired properties to $f$.
\end{proof}
\end{lemma}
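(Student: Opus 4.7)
The plan is to construct $f$ by indexing the ``productive'' finite approximations to $A$. I would first extract two pieces of c.e.\ data from the hypotheses: a prefix-monotone $\lambda : \mathbb{N}^* \to \mathbb{N}$ satisfying $p \in A \iff \sup_n \lambda(p_{\leq n}) = \infty$ (from the $\boldp^0_2$-description), and an enumeration of the finite words extendible to some element of $A$ (from the overt information $\overline{A}$). Call a word $w$ \emph{productive} if it has been enumerated as extendible and $\lambda(w) > \lambda(w_{|w|-1})$, declaring $\varepsilon$ productive by convention. The set of productive words is c.e., and every productive word $u$ has strict productive extensions: any extension $q \in A$ of $u$ has $\lambda$ unbounded on its prefixes past $|u|$, and each such $\lambda$-jumping prefix is eventually enumerated as extendible to $q$ and therefore becomes productive.

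I would then define $F : \mathbb{N}^* \to \mathbb{N}^*$ inductively by $F(\varepsilon) = \varepsilon$ and, given $F(w) = u$, uniformly enumerating the strict productive extensions of $u$ without repetition as $(u_n)_{n \in \mathbb{N}}$ and setting $F(w \cdot n) = u_n$. Since $|u_n| > |u|$, the pointwise limit $f(p) := \bigcup_n F(p_{\leq n})$ defines a total computable map $f : \Baire \to \Baire$. Each step of $F$ along $p$ appends a productive word and hence contributes a fresh jump of $\lambda$, so $\lambda$ is unbounded along $f(p)$ and $f(p) \in A$.

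To establish surjectivity onto $A$ together with the computable section, given $q \in A$ I would inductively set $v_0 = \varepsilon$ and let $v_{k+1}$ be the shortest productive proper extension of $v_k$ that is a prefix of $q$; this exists because $\lambda$ is unbounded along $q$ and the relevant prefixes are enumerated. Letting $n_{k+1}$ be the unique index with $F(n_1 \cdots n_{k+1}) = v_{k+1}$, the point $g(q) := n_1 n_2 \cdots$ is computable from $q$ and satisfies $f(g(q)) = q$. For effective openness I would show $f[w \Baire] = [F(w)] \cap A$: the inclusion $\subseteq$ is immediate from $F(w) \sqsubseteq F(w \cdot u)$, while $\supseteq$ follows by running the section construction starting from $F(w)$ rather than $\varepsilon$; both $F(w)$ and the relatively open set $[F(w)] \cap A$ depend effectively on $w$.

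The main subtlety I expect is coordinating the construction so that $F$ reaches all productive words (needed for $f$ to hit every point of $A$) while the section $g$ remains single-valued and computable. Enumerating the productive extensions of each productive word without repetition, combined with the ``shortest productive prefix of $q$'' selection rule for $g$, handles both demands at once: the first ensures that the $v_{k+1}$ chain actually corresponds to a branch of $F$, and the second fixes a canonical choice when $q$ is hit by more than one $p$.
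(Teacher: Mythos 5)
Your proposal follows essentially the same route as the paper's proof: you extract the same data ($\lambda$ and the enumeration of extendible words), define productive words identically, build the same monotone map $F$ whose children enumerate productive extensions, and take $f$ as its limit. The verification details the paper leaves as ``straight-forward'' (strictness of the extensions so the limit lands in $\Baire$, the shortest-productive-prefix construction of the computable section, and the identity $f[w\Baire] = F(w)\Baire \cap A$ for effective openness) are correctly supplied in your write-up.
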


\begin{observation}
If $\mathbf{X}$ is (pre)computably quasi-Polish and $f : \mathbf{X} \to \mathbf{Y}$ is computable, effectively open and surjective, then $\mathbf{Y}$ is (pre)computably quasi-Polish.
\end{observation}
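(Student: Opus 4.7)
The approach is to use characterization~3 of Theorem~\ref{theo:precomputableQP}: from an effectively fiber-overt computably admissible representation $\delta_X : \subseteq \Baire \to \mathbf{X}$ with $\Pi^0_2$-domain, form $\delta_Y := f \circ \delta_X$. Since $f$ is total, $\dom(\delta_Y) = \dom(\delta_X)$ remains $\Pi^0_2$; since $f$ is a computable surjection, so is $\delta_Y$. The task then reduces to verifying computable admissibility and effective fiber-overtness of $\delta_Y$. For the computable case, I would further invoke Theorem~\ref{theo:computableQP}(2) after noting that computable overtness of $\mathbf{Y}$ is automatic: the image map $A \mapsto \overline{f[A]} : \mathcal{V}(\mathbf{X}) \to \mathcal{V}(\mathbf{Y})$ is computable for computable $f$, and sends $\mathbf{X} \in \mathcal{V}(\mathbf{X})$ to $\mathbf{Y} \in \mathcal{V}(\mathbf{Y})$ by surjectivity of $f$.

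Effective fiber-overtness of $\delta_Y$ is the easier of the two remaining steps. For admissible fiber-overt $\delta_X$, the proof of Theorem~\ref{theo:precomputableQP} already uses that $[w] \mapsto \delta_X([w]) \in \mathcal{O}(\mathbf{X})$ is uniformly computable in $w \in \mathbb{N}^*$ (equivalently, $x \mapsto \{w \mid x \in \delta_X([w])\} \in \mathcal{O}(\mathbb{N}^*)$ is computable). Composing with the effectively open $f$ yields that $[w] \mapsto f(\delta_X([w])) = \delta_Y([w]) \in \mathcal{O}(\mathbf{Y})$ is also uniformly computable, so given $y \in \mathbf{Y}$ and $w$ we may semi-decide $[w] \cap \delta_Y^{-1}(y) \neq \emptyset$ by testing $y \in \delta_Y([w])$. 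This yields effective openness of $\delta_Y$, which for admissible representations coincides with effective fiber-overtness via the same embedding argument.

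Computable admissibility of $\delta_Y$ is the step I expect to be the main obstacle. Effective openness of $f$ guarantees that $f$ is a topological quotient, so $\delta_Y$ at least induces the correct topology on $\mathbf{Y}$. The universal property should then follow from the folklore that computable effectively open surjections push admissibility forward: any candidate representation $\gamma$ of $\mathbf{Y}$ can be translated through $\delta_Y$ by using admissibility of $\delta_X$ to lift $\gamma$-approximations of $y \in \mathbf{Y}$ to $\delta_X$-approximations of some preimage under $f$, with uniformity supplied by effective openness of $f$. Making this fully rigorous with a partial representation whose domain is only $\Pi^0_2$ requires care, and if the direct argument resists, a fallback is to work instead via characterization~1 of Theorem~\ref{theo:precomputableQP} and construct a c.e.\ transitive rounded-ideal presentation of $\mathbf{Y}$ directly from one for $\mathbf{X}$, transporting the relation $\prec$ through $f$ using effective openness.
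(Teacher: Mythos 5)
The paper states this as an unproved observation, so there is no official argument to compare against; judging your outline on its own merits: the skeleton (compose the criterion-3 representation with $f$, check the $\Pi^0_2$ domain and effective fiber-overtness, add computable overtness of $\mathbf{Y}$ for the computable case) is the natural one, and those parts are fine. The genuine gap is exactly the step you defer to ``folklore'': computable admissibility of $f \circ \delta_X$, equivalently the ability to translate names of $y$ into $\delta_X$-names of points of $f^{-1}(y)$. Admissibility of $\delta_X$ converts neighbourhood-filter information about a \emph{specific point of} $\mathbf{X}$ into a name, but approximations to $y$ only ever yield the overt set $\overline{f^{-1}(y)} \in \mathcal{V}(\mathbf{X})$ (via $U \cap f^{-1}(y) \neq \emptyset \Leftrightarrow y \in f[U]$), never the filter of any single preimage point, so there is nothing for admissibility of $\delta_X$ to lift; and even invoking Theorem \ref{theo:overtchoicequasipolish} only produces a point of the closure $\overline{f^{-1}(y)}$, which maps into $\overline{\{y\}}$ but need not map to $y$ when $\mathbf{Y}$ is not $T_1$. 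In fact no argument from the listed hypotheses alone can close this step: take $\mathbf{X} = \Baire$, let $Y=\{a,b\}$ with $\delta_Y(p)=a$ if $p$ contains finitely many $1$s and $\delta_Y(p)=b$ otherwise, and $f=\delta_Y$. Then $f$ is computable, surjective and effectively open (every non-empty open has image $Y$, and non-emptiness is semi-decidable), yet $\mathcal{O}(\mathbf{Y})=\{\emptyset,Y\}$, so $f\circ\delta_X$ is not admissible and $\mathbf{Y}$ is not precomputably quasi-Polish. So the observation tacitly assumes (at least) that $\mathbf{Y}$ is $T_0$, and any proof must use both this and the completeness of $\mathbf{X}$ a second time; neither enters your sketch.

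A workable repair is close to your fallback, but it is a new construction rather than a transport of $\prec$ through $f$. Work with $\mathbf{X} \cong \mathrm{RI}(\prec)$ and note that for $T_0$ $\mathbf{Y}$ one has $f^{-1}(y) = \overline{f^{-1}(y)} \cap \bigcap\{ f^{-1}(W) \mid W \text{ basic open},\ y \in W\}$, a set dense in $\overline{f^{-1}(y)}$. Now strengthen the chain construction from the proof of Theorem \ref{theo:overtchoicequasipolish}: build $n_0 \prec n_1 \prec \dotsb$ so that each ${\uparrow}n_i$ meets $\overline{f^{-1}(y)}$, while dovetailing, for every basic open $W \ni y$, a step that also places a finite set forcing the limit ideal into $f^{-1}(W)$ below the next $n_{i+1}$ (a suitable $\prec$-upper bound exists inside any ideal witnessing ${\uparrow}n_i \cap f^{-1}(y) \neq \emptyset$, and all the conditions to search for are c.e.\ or semi-decidable from the overt information). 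The limit ideal then lies in $f^{-1}(y)$ itself, so $y \mapsto f^{-1}(y)$ is computable; from this the translation of the original representation of $\mathbf{Y}$ into $f\circ\delta_X$, its admissibility, and the effective agreement of topologies (via the retraction $U \mapsto f[f^{-1}(U)]$ of $\mathcal{O}(\mathbf{X})$ onto $\mathcal{O}(\mathbf{Y})$) all follow, completing the outline you gave. This selection argument is where the actual content of the observation lies, and it is missing from your proposal.
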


\begin{observation}
For a computable metric space $\mathbf{X}$ the following are equivalent:
\begin{enumerate}
\item $\mathbf{X}$ is Polish.
\item $\mathbf{X}$ is precomputably quasi-Polish.
\item $\mathbf{X}$ is computably quasi-Polish.
\end{enumerate}
\end{observation}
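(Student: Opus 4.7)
I plan to close the cycle $1. \Rightarrow 3. \Rightarrow 2. \Rightarrow 1.$ The implication $3. \Rightarrow 2.$ is immediate from Definition~\ref{def:computableqp}.

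For $1. \Rightarrow 3.$, I would verify Criterion~3 of Theorem~\ref{theo:computableQP} using the Cauchy representation. Writing $(d_i)_{i \in \NN}$ for the computable dense sequence of $\mathbf{X}$, define $\delta_C : \subseteq \Baire \to \mathbf{X}$ to send a fast Cauchy sequence $(n_k)_{k \in \NN}$ (one with $d(d_{n_k}, d_{n_{k+1}}) < 2^{-k}$) to its limit. Being Polish ensures the metric is complete, so $\delta_C$ is defined on every fast Cauchy sequence and $\dom(\delta_C)$ is a $\Pi^0_1$ (hence $\boldp^0_2$) subset of $\Baire$. Computable admissibility of $\delta_C$ is standard for metric spaces. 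For effective fiber-overtness, from a name of $x$ one can compute arbitrarily good upper bounds on $d(d_i, x)$, and a fast Cauchy prefix $w$ of length $k$ ending in $d_{n_{k-1}}$ extends to a fast Cauchy sequence converging to $x$ iff $d(d_{n_{k-1}}, x) < 2^{-(k-2)}$, a condition semi-decidable in $x$. Since $\dom(\delta_C)$ is itself computably overt in $\Baire$ (any fast Cauchy prefix extends), Lemma~\ref{lemma:trace} converts $\delta_C$ into an equivalent \emph{total} representation satisfying Criterion~3 of Theorem~\ref{theo:computableQP}.

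For $2. \Rightarrow 1.$, I would argue topologically. Forgetting the effectivity in Criterion~3 of Theorem~\ref{theo:precomputableQP}, a precomputably quasi-Polish space is in particular (topologically) quasi-Polish. Since $\mathbf{X}$ is metrizable by hypothesis, the classical result of de~Brecht~\cite{debrecht6} that a metrizable quasi-Polish space is Polish yields the conclusion.

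The main obstacle will be the construction in $1. \Rightarrow 3.$: showing effective fiber-overtness carefully and applying Lemma~\ref{lemma:trace} to totalize the Cauchy representation. The individual ingredients are standard, but assembling them to match Criterion~3 of Theorem~\ref{theo:computableQP} is the substantive step.
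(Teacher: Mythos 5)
The implications $3.\Rightarrow 2.$ and $2.\Rightarrow 1.$ are fine (the latter via relativization and de~Brecht's theorem that metrizable quasi-Polish spaces are Polish), but your step $1.\Rightarrow 3.$ contains a genuine gap, located exactly at the sentence ``Being Polish ensures the metric is complete.'' Polishness of $\mathbf{X}$ means that the topology is separable and \emph{completely metrizable}; it does not mean that the given computable metric $d$ is complete (the interval $(0,1)$ with the Euclidean metric is a Polish computable metric space whose metric is incomplete). For an incomplete $d$, the domain of the Cauchy representation is not the set of all fast Cauchy sequences: it is the set of fast Cauchy sequences \emph{whose limit lies in $\mathbf{X}$}, and since $\mathbf{X}$ is in general only a boldface $\boldp^0_2$ (G$_\delta$) subset of its completion, this domain is not the lightface $\Pi^0_2$ set that Criterion~3 of Theorem~\ref{theo:precomputableQP} (and hence Theorem~\ref{theo:computableQP}) requires. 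Note also that your cycle is then internally inconsistent: the ``Polish'' that $2.\Rightarrow 1.$ delivers is topological Polishness, whereas the ``Polish'' your $1.\Rightarrow 3.$ consumes is completeness of the given metric. To close the cycle as written you would need the bridge ``every computable metric space whose topology is Polish admits a computably equivalent presentation with complete metric (or at least an effectively fiber-overt, computably admissible representation with lightface $\Pi^0_2$ domain)'' -- and this is precisely the substantive content of the direction, which your argument does not address and which does not follow from the completion construction (the completion is computably Polish, but $\mathbf{X}$ sits inside it only as a boldface G$_\delta$).

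In the case where the metric \emph{is} complete, the remainder of your argument is essentially correct: the Cauchy representation is computably admissible and effectively fiber-overt, and its domain is lightface $\Pi^0_2$ (not $\Pi^0_1$ as you claim if one uses strict inequalities, but this is harmless). Two small simplifications: you do not need Lemma~\ref{lemma:trace} at all, since Criterion~3 of Theorem~\ref{theo:precomputableQP} only asks for a $\Pi^0_2$ domain, and computable quasi-Polishness then follows from Criterion~1 of Theorem~\ref{theo:computableQP} because the computable dense sequence gives effective separability. The paper offers no proof of this Observation, so there is no argument to compare against; but as submitted, your proof establishes the equivalence only under the reading ``the given metric is complete,'' and leaves the actual statement (with the standard reading of ``Polish'') unproven.
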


\subsection{Computability of overt choice}
We first show that overt choice is computable for every precomputable quasi-Polish space. We then prove a partial converse, that if overt choice is computable (even with respect to some oracle) for a countably based $T_1$-space $\mathbf{X}$, then $\mathbf{X}$ is quasi-Polish (but not necessarily precomputable quasi-Polish).

\begin{theorem}
\label{theo:overtchoicequasipolish}
Overt choice $\VC_\mathbf{X}$ is computable for every precomputable quasi-Polish space $\mathbf{X}$.
\end{theorem}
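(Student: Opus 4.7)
The plan is to use the characterization from Theorem \ref{theo:precomputableQP} to reduce to the case where $\mathbf{X} \cong \mathrm{RI}(\prec)$ for some c.e.\ transitive relation $\prec$ on $\mathbb{N}$, and then construct a point of $A$ by building an increasing $\prec$-chain whose downward closure lies in $A$.

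Concretely, fix nonempty $A \in \mathcal{V}(\mathrm{RI}(\prec))$. Since $\mathrm{RI}(\prec)$ is represented as a subspace of $\mathcal{O}(\mathbb{N})$, the basic open sets are $U_n := \{I \in \mathrm{RI}(\prec) \mid n \in I\}$, and the overt information about $A$ lets us semi-decide the predicate ``$U_n \cap A \neq \emptyset$'' uniformly in $n$. I will build, computably from this data, a sequence $a_0 \prec a_1 \prec a_2 \prec \cdots$ in $\mathbb{N}$ maintaining the invariant that $U_{a_k} \cap A \neq \emptyset$ has been confirmed. To start, dovetail over $n \in \mathbb{N}$ for a confirmation that $U_n \cap A \neq \emptyset$; such a confirmation must eventually appear because $A$ is nonempty and every rounded ideal is nonempty. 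Set $a_0$ to be the first witness. Given $a_k$, dovetail over pairs $(m, \text{witness that } a_k \prec m)$ (using c.e.-ness of $\prec$) and confirmations that $U_m \cap A \neq \emptyset$; the first $m$ for which both are confirmed becomes $a_{k+1}$. Existence of such $m$ follows from the invariant: any $I^* \in A$ with $a_k \in I^*$ contains, by directedness applied to $(a_k, a_k)$, some $m$ with $a_k \prec m$, and then $U_m$ witnesses this $I^*$.

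The output is the rounded ideal $I := \{n \in \mathbb{N} \mid \exists k.\ n \prec a_k\}$, which we can enumerate by dovetailing over pairs $(n,k)$ and checking $n \prec a_k$. Directedness of $I$ follows because for $x \prec a_j$ and $y \prec a_k$, taking $\ell > \max(j,k)$ gives $x \prec a_\ell$ and $y \prec a_\ell$ via transitivity along the chain; downward closure is immediate from transitivity; nonemptiness is clear since $a_0 \prec a_1$ puts $a_0 \in I$.

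The key verification is that $I \in A$. Since $A$ is topologically closed in $\mathrm{RI}(\prec)$, it suffices to show every basic open neighborhood of $I$ meets $A$, i.e.\ for every finite $F \subseteq I$ we have $\{J \in \mathrm{RI}(\prec) \mid F \subseteq J\} \cap A \neq \emptyset$. For such $F$, each $f \in F$ satisfies $f \prec a_{k_f}$ for some $k_f$; taking $k := \max_{f \in F} k_f$ and using transitivity along the chain gives $f \prec a_k$ for every $f \in F$. By the invariant, $U_{a_k} \cap A \neq \emptyset$, and since any rounded ideal containing $a_k$ is downward-closed it contains all such $f$; hence $U_{a_k} \subseteq \{J \mid F \subseteq J\}$ and the latter meets $A$ as required.

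The main obstacle I anticipate is really just the bookkeeping around the two uses of directedness --- once to guarantee existence of the next chain element $a_{k+1}$ (so that the dovetailed search terminates), and once in the final verification to extract a single $a_k$ dominating an arbitrary finite $F \subseteq I$; both rely on the fact that transitivity of $\prec$ promotes chain-membership to the ``witness-above'' relation used by the overt information. Everything else is a straightforward dovetailing argument using only c.e.-ness of $\prec$ and the semi-decidable data from $\mathcal{V}(\mathrm{RI}(\prec))$.
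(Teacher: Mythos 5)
Your proposal is correct and follows essentially the same route as the paper: reduce to $\mathrm{RI}(\prec)$, computably grow a $\prec$-ascending chain each of whose basic open sets ${\uparrow}a_k$ is confirmed to meet $A$, output the rounded ideal it generates, and conclude $I \in A$ because every basic open neighbourhood of $I$ meets the closed set $A$. Your extra bookkeeping (directedness via the pair $(a_k,a_k)$, handling finite intersections of subbasic opens) just makes explicit what the paper leaves implicit.
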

\begin{proof}
Let $\prec\subseteq \NN\times\NN$ be a c.e. transitive relation such that $\mathbf{X} \cong \mathrm{RI}(\prec)$. For $n\in\NN$ we write ${\uparrow}n $ for the basic open subset of $\mathrm{RI}(\prec)$ consisting of all rounded ideals that contain $n$. Clearly $\cal{V}(\mathrm{RI}(\prec)) \cong \cal{V}(\mathbf{X})$, so it suffices to consider overt choice for $\mathrm{RI}(\prec)$.

Given a presentation for some non-empty closed $A\in\cal{V}(\mathrm{RI}(\prec))$, we construct a $\prec$-ascending chain $(n_i)_{i\in\NN}$ such that $A$ intersects each basic open ${\uparrow}n_i$. To construct the chain, first choose any $n_0\in \NN$ such that $A$ has non-emtpy intersection with ${\uparrow}n_0$. Once $n_i$ has been decided, choose any $n_{i+1}\in \NN$ such that $A$ intersects ${\uparrow}n_{i+1}$ and $n_i \prec n_{i+1}$. Such a chain can be computed from a presentation of $A$ because $\prec$ is a c.e. relation and it can be semi-decided whether $A$ intersects a given basic open set.

Finally, we can enumerate the set $I = \{ n\in\NN \mid (\exists i\in\NN)\, n \prec n_i \}$, which is the rounded ideal generated by the sequence $(n_i)_{i\in\NN}$. For any $n\in\NN$ with $I \in {\uparrow}n$, there is $i\in\NN$ with $n \prec n_i \in I$, hence $I \in {\uparrow n_i} \subseteq {\uparrow n}$. Therefore, $I \in A$ because every basic open containing $I$ intersects $A$.
\end{proof}

We obtain the following corollary, which generalizes the corresponding theorem for computable Polish spaces from \cite{presser}:

\begin{corollary}
Let $\mathbf{X}$ be a precomputable quasi-Polish space. The computable map $(a_i)_{i \in \mathbb{N}} \mapsto \mathrm{cl} \{a_i \mid i \in \mathbb{N}\} : \mathbf{X}^\mathbb{N} \to \mathcal{V}(\mathbf{X}) \setminus \{\emptyset\}$ has a computable multi-valued inverse.
\begin{proof}
Given non-empty $A \in \mathcal{V}(\mathbf{X})$ we can enumerate all basic open sets $U_i$ having a non-empty intersection with $A$. We can then compute $\mathrm{cl} (U_i \cap A) \in \mathcal{V}(\mathbf{X})$, and use $\mathrm{VC}_\mathbf{X}$ to extract a point. The resulting sequence is dense in $A$.
\end{proof}
\end{corollary}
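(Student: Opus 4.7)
The plan is to leverage the computability of $\VC_\mathbf{X}$ established in Theorem~\ref{theo:overtchoicequasipolish} together with the effective countable base of $\mathbf{X}$ (which exists because $\mathbf{X}$, being precomputably quasi-Polish, is computably embedded in $\mathcal{O}(\NN)$, whose standard base is effective). The idea is to enumerate basic open restrictions of $A$, convert each into an overt set, and apply overt choice to obtain a point, producing a sequence whose closure is $A$.

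More concretely, I would proceed as follows. Fix an effective enumeration $(B_n)_{n\in\NN}$ of basic opens of $\mathbf{X}$. Given $A \in \mathcal{V}(\mathbf{X})\setminus\{\emptyset\}$, semi-decide which indices $n$ satisfy $B_n \cap A \neq \emptyset$; this yields an element of $\mathcal{O}(\NN)$. For each such $n$, I would compute the overt set $\mathrm{cl}(B_n \cap A) \in \mathcal{V}(\mathbf{X})$: for any open $V$ we have $V \cap (B_n \cap A) \neq \emptyset \Leftrightarrow (V \cap B_n) \cap A \neq \emptyset$, which is semi-decidable by the overt representation of $A$. By construction this overt set is non-empty, so applying $\VC_\mathbf{X}$ yields a point $a_n \in B_n \cap A$. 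Density is immediate: any open $U$ meeting $A$ contains some basic open $B_n$ meeting $A$, hence $a_n \in U$, so the closure of $\{a_n\}$ is precisely $A$.

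The main (and essentially only) subtlety will be in packaging the output as a total sequence in $\mathbf{X}^\NN$. The set of indices $n$ with $B_n \cap A \neq \emptyset$ is revealed only by an enumeration in $\mathcal{O}(\NN)$, so one has to interleave ``waiting'' with production; moreover, in pathological bases, this enumeration could a priori be finite, in which case the sequence must be padded by repetition of an already produced point. Both issues are standard: one maintains a growing list of confirmed intersecting indices and at stage $k$ outputs the $(k\mathrm{~mod~length})$-th point computed so far, using the first obtained $a_{n_0}$ as a fallback until the first overt-choice call returns. These bookkeeping steps are routine and do not disturb the core computation outlined above.
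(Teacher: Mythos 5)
Your proposal is correct and follows essentially the same route as the paper's proof: enumerate the basic opens meeting $A$, compute $\mathrm{cl}(B_n \cap A) \in \mathcal{V}(\mathbf{X})$, apply the computable $\VC_\mathbf{X}$ from Theorem~\ref{theo:overtchoicequasipolish}, and observe the resulting points are dense in $A$. Your extra remarks on padding the output into a total sequence are just the bookkeeping the paper leaves implicit.
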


\subsection{Continuity of overt choice as a completeness notion}
\label{subsec:completeness}
We next prove a partial converse to Theorem~\ref{theo:overtchoicequasipolish} using a game theoretic characterization of quasi-Polish spaces.

Given a non-empty space $\mathbf{X}$, the \emph{convergent strong Choquet game} \cite{dorais,debrecht6} is played as follows. Player I first plays a pair $(U_0, x_0)$ with $U_0 \in \mathcal{O}(\mathbf{X})$ and $x_0 \in U_0$. Player II must respond with an open set $V_0$ such that $x_0 \in V_0 \subseteq U_0$. Player I then responds with a pair $(U_1, x_1)$ with $U_1$ open and $x_1 \in U_1 \subseteq V_0$, then Player II must play an open $V_1$ with $x_1 \in V_1 \subseteq U_1$, and so on. Player II wins the game if and only if the sequence of opens $(V_i)_{i\in\NN}$ is a neighborhood basis for a unique point in $\mathbf{X}$. It was shown in \cite{debrecht6} that a non-empty countably based $T_0$-space $\mathbf{X}$ is quasi-Polish iff Player II has a winning strategy (see also \cite{ruiyuan}, which fills a gap in the original proof).

\begin{theorem}
\label{theo:vcimpliesqp}
If $\mathbf{X}$ is a countably based $T_1$-space and $\VC_\mathbf{X}$ is continuous, then $\mathbf{X}$ is quasi-Polish.
\begin{proof}
Let $R$ be a continuous realizer for $\VC_\mathbf{X}$. We show that $R$ can be used to define a winning strategy for Player II in the convergent strong Choquet game for $\mathbf{X}$. The basic idea of Player II's strategy is to present to $R$ the closure of the sequence of elements $(x_0, x_1, \ldots, x_i)$ played by Player I, and the $V_i$ played by Player II will correspond to the output of $R$.

Fix a countable basis $(B_k)_{k\in\NN}$ for $\mathbf{X}$. A valid input to $R$ consists of an enumeration of all the $B_k$ that intersect some non-empty closed $A \subseteq \mathbf{X}$. We can assume the output of $R$ will be a decreasing sequence of basic opens forming a neighborhood basis for some $x\in A$.

At each round $i$, Player II will keep track of a finite set $A_i$ (which is closed because $\mathbf{X}$ is $T_1$). Set $A_0 = \emptyset$. (In round $i$, the set $A_i$
will actually be the finite set of elements that have been played by Player I that are distinct from the element $x_i$ played that round.)

At round $i$, Player I plays $(U_i, x_i)$. Up until now, Player II's strategy will have guaranteed that the following all hold at each round $i$:
\begin{enumerate}
\item $x_i \not\in A_i$,
\item $U_i \cap A_i = \emptyset$,
\item the information fed to $R$ until now is consistent with a presentation for $A_i \cup \{x_i\}$,
\item the output of $R$ until now is consistent with a presentation of $x_i$,
\item the output of $R$ until now can only be extended to a presentation of an element in $\mathbf{X} \setminus A_i$.
\end{enumerate}

(This is trivial for $i=0$, before any information is fed to the realizer $R$).

Player II chooses $V_i$ as follows. There are two cases:
\begin{quote}
\begin{description}
\item[Case 1)] Either $i=0$ or $x_i = x_{i-1}$. Then define $A_{i+1} = A_i$.

\item[Case 2)] $x_i \not= x_{i-1}$. Then define $A_{i+1} = A_i \cup \{ x_{i-1} \}$.
\end{description}
\end{quote}

In either case, extend the presentation being fed to $R$ so that it is a presentation of $A_i \cup \{x_i\}$ (this is possible by item 3 above). Player II makes
sure the presentation is extended enough so that it includes every basic open $B_k$ that intersects $A_i \cup \{x_i\}$ for each $k \leq i$ (this is to guarantee that
as $i$ goes to infinity we are actually giving $R$ a valid presentation).

Since $R$ is being fed a presentation of $A_i \cup \{x_i\}$, items 1 and 5 above force the output of $R$ to be a presentation of $x_i$. So $R$ must eventually
output a basic open $W$ with $x_i \in W \subseteq (U_i \setminus A_{i+1})$. At this point we pause our execution of $R$ and do not feed it any more information. Define $V_i$ to be the intersection of $W$ with all of the basic opens $B_k$ that contain $x_i$ that have been fed as input to $R$ so far. Player II plays $V_i$, and the game continues to round $i+1$.

We check that the items 1) - 5) still hold in round $i+1$. Since we must have $x_{i+1} \in U_{i+1} \subseteq V_i$, and $V_i \cap A_{i+1} = \emptyset$, items 1 and 2 hold. Similarly, items 4 and 5 hold because $x_{i+1} \in U_{i+1} \subseteq W$ and $W \cap A_{i+1} = \emptyset$, where $W$ is the last basic open that was outputted by
$R$. Concerning item 3, the only subtlety is if Case 2) held in round $i$ and $x_{i+1} \neq x_i$. So $x_i \not\in A_{i+1}$ but so far $R$ has been presented
with information for $A_{i+1} \cup \{x_i\}$. However, item 3 still holds in round $i+1$ because $x_{i+1}$ will be in $V_i$ which is a subset of the intersection of
all the basic opens containing $x_i$ that have so far been fed to $R$. Therefore, any basic open $B_k$ that has been fed as input to $R$ either intersects
$A_{i+1}$ or else contains $V_i$, hence contains $x_{i+1}$. Therefore, the presentation given to $R$ is consistent with a presentation for $A_{i+1} \cup
\{x_{i+1}\}$. This shows that Player II's strategy is well-defined.

Finally, we must show that this strategy is winning. It suffices to show that the information fed to $R$ is a valid presentation of the overt closed set $A$
defined as the closure of the infinite sequence $(x_i)_{i\in\NN}$, because then the sequence $V_i$ will be an open neighborhood basis of the point in $A$ chosen by $R$.

Clearly, every basic open that was presented to $R$ intersects $A$. So to prove that the presentation given to $R$ is valid, it only remains to check that if
$B_k$ is a basic open that intersects $A$, then $B_k$ was included in the presentation to $R$ at some time. But if $B_k$ intersects $A$, then $B_k$ contains some
$x_i$, and so for round $i' > \max \{i,k\}$ we have that $B_k$ intersects $A_{i'} \cup \{x_{i'}\}$, and so $B_k$ was included in the presentation to $R$ in round
$i'$.
\end{proof}
\end{theorem}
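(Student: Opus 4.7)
The plan is to invoke the game-theoretic characterization of quasi-Polish spaces recalled just before the theorem: a non-empty countably based $T_0$-space is quasi-Polish iff Player II has a winning strategy in the convergent strong Choquet game. Thus, assuming a continuous realizer $R$ of $\VC_\mathbf{X}$, I would construct from $R$ a winning strategy for Player II. Fix a countable basis $(B_k)_{k\in\NN}$; a valid input to $R$ is an enumeration of the basic opens that meet a non-empty closed set $A$, and its output is a name of some $x \in A$ that we may assume is a decreasing sequence of basic opens forming a neighbourhood basis of $x$.

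The central idea is that after round $i$, when Player I has played $(U_i, x_i)$, Player II pretends to be presenting $R$ with the closed set $A_i \cup \{x_i\}$, where $A_i$ is a growing finite set of earlier points of Player I that are distinct from $x_i$. Here we use the $T_1$ hypothesis crucially, as it makes finite sets closed and hence legitimate elements of $\mathcal{V}(\mathbf{X})$. Player II feeds $R$ every basic $B_k$ with $k\le i$ that meets $A_i \cup \{x_i\}$, runs $R$ until it outputs a basic open $W$ witnessing a commitment to some point, and then sets $V_i := W \cap \bigcap\{B_k : B_k \text{ already fed to } R \text{ and } x_i \in B_k\}$. The invariant to maintain is: $x_i\notin A_i$, $U_i\cap A_i=\emptyset$, the data fed to $R$ is consistent with the closed set $A_i\cup\{x_i\}$, and the partial output of $R$ can only extend to a name of a point in $\mathbf{X}\setminus A_i$, hence currently forces $x_i$. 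When Player I subsequently plays $x_{i+1}$, either $x_{i+1}=x_i$ and we keep $A_{i+1}=A_i$, or else $x_{i+1}\neq x_i$ and we absorb $x_i$ into $A_{i+1}:=A_i\cup\{x_i\}$.

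The main obstacle is preservation of the invariant in the second case, because some basic opens previously fed to $R$ contained $x_i$ but perhaps neither intersect $A_{i+1}$ in the strict sense nor need contain $x_{i+1}$. This is resolved by the construction of $V_i$: since $V_i$ is contained in every $B_k$ hitherto fed to $R$ that contains $x_i$, and $x_{i+1}\in U_{i+1}\subseteq V_i$, every basic open already given to $R$ is either seen to meet $A_{i+1}$ or still contains $x_{i+1}$, so the presentation is consistent with $A_{i+1}\cup\{x_{i+1}\}$. Similarly, $W\cap A_{i+1}=\emptyset$ together with $x_{i+1}\in W$ re-establishes the output-side conditions.

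Finally, I would verify the strategy is winning. The bookkeeping ``include all $B_k$ with $k\le i$ that meet $A_i\cup\{x_i\}$'' ensures that in the limit $R$ is being fed exactly the presentation of the closure $A$ of the sequence $(x_i)_{i\in\NN}$, so $R$ produces a genuine name of some $x\in A$. The $V_i$'s are decreasing, each contains $x_{i+1}$, and each is contained in the $i$-th basic open output by $R$; hence $(V_i)_{i\in\NN}$ is a neighbourhood basis of the unique point $x$, so Player II wins.
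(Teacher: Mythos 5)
Your proposal is correct and follows essentially the same route as the paper's proof: the same game-theoretic reduction, the same bookkeeping with the finite closed sets $A_i$ (using $T_1$), the same invariants forcing $R$'s output to track $x_i$, the same definition of $V_i$ as $W$ intersected with the previously fed basic opens containing $x_i$, and the same final verification that the limiting input is a valid presentation of the closure of $(x_i)_{i\in\NN}$. No substantive differences to report.
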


\begin{corollary}
\label{corr:overtchoiceqpcharac}
A countably-based $T_1$-space $\mathbf{X}$ is quasi-Polish if and only if $\VC_\mathbf{X}$ is continuous.
\begin{proof}
Combine Theorem \ref{theo:vcimpliesqp} with the relativization of Theorem \ref{theo:overtchoicequasipolish}.
\end{proof}
\end{corollary}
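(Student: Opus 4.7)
The plan is to establish the two implications separately and combine them. The reverse direction, that continuity of $\VC_\mathbf{X}$ forces $\mathbf{X}$ to be quasi-Polish, is precisely the content of Theorem \ref{theo:vcimpliesqp}, so I would simply invoke that result. (It is exactly here that the $T_1$ assumption of the corollary is used.)

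For the forward direction, I would relativize Theorem \ref{theo:overtchoicequasipolish}. By the classical characterisations of quasi-Polish spaces given in \cite{debrecht6}, every quasi-Polish space is homeomorphic to $\mathrm{RI}(\prec)$ for some transitive relation $\mathalpha{\prec} \subseteq \mathbb{N}\times\mathbb{N}$. Such a relation is trivially c.e.\ relative to itself, viewed as an oracle, so $\mathbf{X}$ satisfies Criterion 1 of Theorem \ref{theo:precomputableQP} relative to $\prec$; that is, $\mathbf{X}$ is precomputably quasi-Polish relative to $\prec$. The relativised version of Theorem \ref{theo:overtchoicequasipolish} then gives that $\VC_\mathbf{X}$ is computable relative to $\prec$, and since a map between represented spaces is continuous iff it is computable relative to some oracle, this yields continuity of $\VC_\mathbf{X}$.

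The only real verification needed, and hence the main (mild) obstacle, is to check that the proof of Theorem \ref{theo:overtchoicequasipolish} relativises cleanly. Inspection of that proof shows that computability was used only in two places: to enumerate the transitive relation $\prec$, and to semi-decide, given a presentation of $A \in \mathcal{V}(\mathrm{RI}(\prec))$, whether $A$ intersects a given basic open $\uparrow n$. Both tasks carry over verbatim once we allow an oracle encoding $\prec$; the chain construction and the passage to the rounded ideal generated by the chain are unchanged. With that observation in hand, the corollary reduces to a direct combination of Theorems \ref{theo:overtchoicequasipolish} and \ref{theo:vcimpliesqp}.
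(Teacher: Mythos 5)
Your proposal is correct and takes essentially the same route as the paper, which likewise proves the corollary by combining Theorem \ref{theo:vcimpliesqp} with the relativization of Theorem \ref{theo:overtchoicequasipolish}; your check that the latter proof relativizes (oracle-enumerating $\prec$ and semi-deciding intersection with basic opens) is exactly the implicit content of the paper's one-line argument. The only minor imprecision is attributing the presentation of an arbitrary quasi-Polish space as $\mathrm{RI}(\prec)$ directly to \cite{debrecht6}: that paper gives the $\Pi^0_2$-subspace-of-$\mathcal{O}(\mathbb{N})$ characterization, and the $\mathrm{RI}(\prec)$ form is then obtained via the relativization of Theorem \ref{theo:precomputableQP}.
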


A classical result by E.~Michael \cite{michael} states that if $X$ is a zero-dimensional metrizable space and $Y$ is a complete metric space, then every lower semi-continuous function from $X$ to the non-empty closed subsets of $Y$ admits a continuous selection. It was then shown in \cite{vanMill} that the completeness of $Y$ is necessary. If $X$ is a separable zero-dimensional metrizable space and $Y$ is a $\mathrm{QCB}_0$-space, then any lower semi-continuous function $F$ from $X$ to the closed subsets of $Y$ can be viewed as a continuous function from $X$ to ${\cal V}(Y)$. Since there exists a continuous reduction of $F$ to an admissible representation of ${\cal V}(Y)$, it is clear that if overt choice on $Y$ is continuous then $F$ has a continuous selection. Conversely, a continuous solution to overt choice on $Y$ is equivalent to the existence of a continuous selection for the admissible representation of ${\cal V}(Y)$. Therefore, we can view Corollary~\ref{corr:overtchoiceqpcharac} as an extension of these classical selection results to the case that $Y$ is a countably based (possibly non-metrizable) $T_1$-space.

Note that a computable version of Corollary \ref{corr:overtchoiceqpcharac} does not hold, because the computability of  $\VC_\mathbf{X}$ does not imply that $\mathbf{X}$ is even a precomputable quasi-Polish space. A trivial counter example is the singleton space $\{p\}$ where $p \in \Cantor$ is chosen such that $p \nleq_{\mathrm{M}} A$ for any non-empty $\Pi^0_2$-set $A$. Then $p$ can be computed from any non-empty $A \in \mathcal{V}(\{p\})$, even though $\{p\}$ is not precomputably quasi-Polish as it violates Condition 3 of Theorem \ref{theo:precomputableQP} by design.


\section{Other countably-based spaces}
\label{sec:other}
In this section, we study overt choice on countably-based spaces that are not quasi-Polish. We do not know of relevant examples in this class where overt choice would be computable, and as such our focus is on investigating the Weihrauch degrees of overt choice of such spaces. First we gather some auxiliary on Weihrauch reducibility in Subsection \ref{subsec:qb:aux}. The main content of this section is spread over Subsection \ref{subsec:qb:general} where we provide results pertaining to spaces fulfilling various general conditions, and Subsection \ref{subsec:qb:specific}, where we consider overt choice for two specific spaces, namely the Euclidean rationals $\mathbb{Q}$ and the space $S_0$ from \cite{debrecht8}. Plenty of questions are left open, and we state some of those in Subsection \ref{subsec:qb:questions}.

\subsection{Some auxiliary results}
\label{subsec:qb:aux}
We consider multivalued functions $f : \subseteq \mathcal{O}(\mathbb{N}) \mto \mathbf{X}$. Call such $f$ \emph{uncomputable everywhere} if for every finite $A \subseteq \mathbb{N}$ the restriction $f|_{\{U \in \mathcal{O}(\mathbb{N}) \mid A \subseteq U\}}$ is uncomputable.

\begin{proposition}
\label{prop:notbelowcn}
Let $f : \subseteq \mathcal{O}(\mathbb{N}) \mto \mathbf{X}$ be uncomputable everywhere, and satisfy $\mathbb{N} \in \dom(f)$. Then $f \nleqW \C_\mathbb{N}$.
\begin{proof}
Assume that $f \leqW \C_\mathbb{N}$ via some computable $K$, $H : \subseteq \Baire \to \Baire$. The semantics for $H$ mean that if $p$ is some enumeration of the input to $f$, then $H(p)$ is some enumeration of the input to $\C_\mathbb{N}$. We will construct some $p \in \dom(H)$ with $\range (p) = \mathbb{N}$ and $\range (H(p)) = \mathbb{N}$, i.e.~$p$ is a name for some valid input to $f$, but $H(p)$ is not a name for some valid input to $\C_\mathbb{N}$, and thus derive a contradiction.

Let $h : \subseteq \mathbb{N}^* \to \mathbb{N}^*$ be a word function for $H$. Assume that $\forall p \in \dom(H) \ 0 \notin \range(H(p))$. Then $0$ is a valid output to $\C_\mathbb{N}$ on any set represented by $H(p)$, which would imply computability of $f$. Thus, there is some finite prefix $w_0 \in \mathbb{N}^*$ such that $0 \in B_0 := \range( h(w_0))$.  As before, assuming that $\forall p \in \dom(H) \cap w_00\Baire \ 1 \notin \range(H(p))$ leads to a contradiction of $f$ being uncomputable everywhere, so we can extend to some $w_00w_1$ such that $1 \in B_1 := \range (h(w_00w_1))$, and so on. Let $p := w_00w_11w_22\ldots$. This $p$ is the desired contradictory input to $H$.
\end{proof}
\end{proposition}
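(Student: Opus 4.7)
The plan is a diagonalization argument by contradiction. Suppose $f \leqW \C_\mathbb{N}$ via computable $H, K$. A name of an input to $f$ is an enumeration $p \in \Baire$ of some $U = \range(p) \in \mathcal{O}(\mathbb{N})$; the reduction requires $H(p)$ to be a name of an input to $\C_\mathbb{N}$, i.e.~an enumeration of a \emph{proper} subset of $\mathbb{N}$, and for any $n \in \mathbb{N} \setminus \range(H(p))$ the output $K(\langle p, n^\omega\rangle)$ must be a name of an element of $f(U)$. My aim is to construct $p \in \Baire$ with $\range(p) = \mathbb{N}$ (a valid name for $\mathbb{N} \in \dom(f)$) but simultaneously $\range(H(p)) = \mathbb{N}$, violating the enumeration-of-proper-subset requirement.

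The engine of the construction is the following key observation. Suppose there is a finite word $w \in \mathbb{N}^*$ and an $n \in \mathbb{N}$ such that $n \notin \range(H(q))$ for every $q \in \dom(H)$ extending $w$. Then the constant stream $n^\omega$ is a legal $\C_\mathbb{N}$-output for every such $q$, so $q \mapsto K(\langle q, n^\omega\rangle)$ computably realizes $f$ on all inputs whose names extend $w$. These inputs are precisely $\{U \in \mathcal{O}(\mathbb{N}) \mid \range(w) \subseteq U\}$, since from any name of such a $U$ one can computably prepend $w$ to obtain an extended name with the same range. This contradicts uncomputability everywhere applied to $A = \range(w)$. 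Hence for every such $w$ and $n$ there exists $q \in \dom(H)$ extending $w$ with $n \in \range(H(q))$, and by continuity of $H$ a word-function extension $wv$ already satisfies $n \in \range(h(wv))$, where $h$ is a monotone word function for $H$.

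The construction of $p$ then proceeds stage by stage. Set $w_0 := \varepsilon$. Given $w_n$, apply the observation with $w = w_n$ to obtain an extension $w_n v_n$ with $n \in \range(h(w_n v_n))$, then append the symbol $n$ to define $w_{n+1} := w_n v_n n$. Let $p$ be the monotone limit of the $w_n$. By construction $n \in \range(p)$ for every $n$, so $\range(p) = \mathbb{N} \in \dom(f)$ and hence $p \in \dom(H)$; simultaneously $n \in \range(H(p))$ for every $n$, so $\range(H(p)) = \mathbb{N}$, which is the desired contradiction.

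The main subtlety to be careful about is the passage from ``$f$ restricted to the set of inputs whose names extend $w$ is computable'' to ``$f$ restricted to $\{U \in \mathcal{O}(\mathbb{N}) \mid \range(w) \subseteq U\}$ is computable,'' because the former is a condition on names and the latter on semantic inputs. The check is routine once one recalls that the standard representation of $\mathcal{O}(\mathbb{N})$ is total and permits arbitrary repetition, so prepending the finite word $w$ to any name of $U \supseteq \range(w)$ yields another name of the same $U$; but this is the conceptual hinge linking the hypothesis ``uncomputable everywhere'' with the combinatorics of word functions, and it is where I would take most care in writing out the proof.
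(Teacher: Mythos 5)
Your proposal is correct and follows essentially the same route as the paper's proof: a stage-by-stage diagonalization that builds a name $p$ of $\mathbb{N}$ while forcing $\range(H(p)) = \mathbb{N}$, using at each stage that a persistently omitted value $n$ would let $K(\langle \cdot, n^\omega\rangle)$ computably solve the restriction of $f$ to $\{U \mid \range(w) \subseteq U\}$, contradicting uncomputability everywhere. Your explicit remark about prepending $w$ to translate a condition on names into one on the semantic restriction is exactly the step the paper leaves implicit, and is handled correctly.
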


\begin{proposition}
\label{prop:closedchoicetransfer}
Let $\mathbf{Y}$ be a computable Hausdorff space. If $f : \subseteq \mathbf{X} \mto \mathbf{Y}$ satisfies $f \leqW \C_\Cantor \star g$, then also $f \leqW \C_\mathbf{Y} \star g$.
\begin{proof}
The reduction $f \leqW \C_\Cantor \star g$ provides us with, for any $x \in \dom(f)$, a tree $T_x \subseteq \{0,1\}^*$ and a continuous function $K_x : [T_x] \to \mathbf{Y}$ such that $K_x([T_x]) \subseteq f(x)$. We can compute $T_x$ and $K_x$ jointly with a single application of $g$, which also yields all the other information from $g$ we may desire. From $T_x$ and $K_x$ we can compute $K_x([T_x]) \in \mathcal{A}(\mathbf{Y})$ by noting that $y \notin K_x([T_x])$ iff $K_x^{-1}(\mathbf{X} \setminus \{y\}) \supseteq T_x$. The argument follows.
\end{proof}
\end{proposition}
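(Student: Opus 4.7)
The plan is to replicate the witness of $f \leqW \C_\Cantor \star g$, then replace the $\C_\Cantor$-call by a $\C_\mathbf{Y}$-call. Unfolding the compositional product, the hypothesis provides us with, uniformly from $x \in \dom(f)$ and one call to $g$ (together with computable pre- and post-processing), a non-empty closed set $[T_x] \in \mathcal{A}(\Cantor)$ together with a continuous function $K_x : [T_x] \to \mathbf{Y}$ satisfying $K_x([T_x]) \subseteq f(x)$. The aim is to feed the image $K_x([T_x]) \in \mathcal{A}(\mathbf{Y})$ to $\C_\mathbf{Y}$ in place of feeding $[T_x]$ to $\C_\Cantor$.

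The key step is to argue that $K_x([T_x])$ is uniformly computable as an element of $\mathcal{A}(\mathbf{Y})$ from $T_x$ and $K_x$. Because $\mathbf{Y}$ is computably Hausdorff, the map $y \mapsto \mathbf{Y} \setminus \{y\} : \mathbf{Y} \to \mathcal{O}(\mathbf{Y})$ is computable, hence so is $y \mapsto K_x^{-1}(\mathbf{Y} \setminus \{y\}) \in \mathcal{O}([T_x])$ by composition with $K_x$. Moreover, $[T_x]$ is computably compact, being a closed subset of the computably compact space $\Cantor$, so the predicate ``$U \supseteq [T_x]$'' on $U \in \mathcal{O}([T_x])$ is semi-decidable uniformly in $T_x$. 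Chaining these two facts, the condition ``$y \notin K_x([T_x])$'' is semi-decidable in $y$; that is, $K_x([T_x])$ is computable in $\mathcal{A}(\mathbf{Y})$, and it is non-empty because $[T_x]$ is.

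The reduction $f \leqW \C_\mathbf{Y} \star g$ is then assembled as follows: invoke $g$ exactly as in the original reduction to produce $(T_x, K_x)$ along with whatever other data the original post-processor needed, compute $K_x([T_x]) \in \mathcal{A}(\mathbf{Y}) \setminus \{\emptyset\}$ as above, apply $\C_\mathbf{Y}$ to obtain some $y^* \in K_x([T_x]) \subseteq f(x)$, and return $y^*$. The main obstacle is mostly bookkeeping with the formal semantics of $\star$ from \cite{paulybrattka4}, in particular ensuring that the substitution of choice principles respects the ``single-call'' discipline; the substantive content is the image computation above, which crucially uses both the computable Hausdorffness of $\mathbf{Y}$ (to make singletons uniformly open-complements) and the computable compactness of $[T_x]$ (to turn pointwise openness into a containment test).
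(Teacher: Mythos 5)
Your proposal is correct and follows essentially the same route as the paper: obtain $(T_x,K_x)$ from the single $g$-call, compute $K_x([T_x])\in\mathcal{A}(\mathbf{Y})$ via the equivalence $y\notin K_x([T_x])\Leftrightarrow K_x^{-1}(\mathbf{Y}\setminus\{y\})\supseteq [T_x]$, and feed this set to $\C_\mathbf{Y}$. You merely spell out in more detail the ingredients (computable Hausdorffness for the singleton complements, computable compactness of $[T_x]$ for the containment test) that the paper's proof leaves implicit.
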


\begin{proposition}
\label{prop:cqcn}
$\C_\mathbb{Q} \equivW \C_\mathbb{N}$
\begin{proof}
That $\C_\mathbb{N} \leqW \C_\mathbb{Q}$ follows from $\mathbb{N}$ embedding as a computably closed subspace into $\mathbb{Q}$ by \cite[Corollary 4.3]{paulybrattka}, and that $\C_\mathbb{Q} \leqW \C_\mathbb{N}$ follows from the existence of a computable surjection $s : \mathbb{N} \to \mathbb{Q}$ and \cite[Proposition 3.7]{paulybrattka}.
\end{proof}
\end{proposition}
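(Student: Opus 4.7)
The plan is to establish the two directions separately by invoking the general transfer results for closed choice that are recorded in \cite{paulybrattka}. For $\C_\mathbb{N} \leqW \C_\mathbb{Q}$, the task reduces to showing that $\mathbb{N}$ embeds as a computably closed subspace of the Euclidean rationals $\mathbb{Q}$; \cite[Corollary 4.3]{paulybrattka} (that $\C_\mathbf{X} \leqW \C_\mathbf{Y}$ whenever $\mathbf{X}$ is computably closed in $\mathbf{Y}$) then applies directly. Computable closedness is straightforward: from a rational $q$ we can compute its representation in lowest terms, decide whether it is an integer, and in the negative case output an explicit rational open interval around $q$ disjoint from $\mathbb{N}$ (of radius, say, half the distance to the nearest integer). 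Hence $\mathbb{Q} \setminus \mathbb{N} \in \mathcal{O}(\mathbb{Q})$ is computable. Concretely, given an enumeration of $\mathbb{N} \setminus A$ for nonempty closed $A \subseteq \mathbb{N}$, we convert it into an enumeration of $\mathbb{Q} \setminus A \in \mathcal{O}(\mathbb{Q})$, and the output of $\C_\mathbb{Q}$ automatically lies in $\mathbb{N} \cap A = A$.

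For $\C_\mathbb{Q} \leqW \C_\mathbb{N}$, I would use an effective enumeration $s : \mathbb{N} \to \mathbb{Q}$, which is a computable surjection of represented spaces. Then \cite[Proposition 3.7]{paulybrattka} (that a computable surjection $s : \mathbf{X} \to \mathbf{Y}$ yields $\C_\mathbf{Y} \leqW \C_\mathbf{X}$) applies. Directly: given $A \in \mathcal{A}(\mathbb{Q})$ presented via $\mathbb{Q} \setminus A \in \mathcal{O}(\mathbb{Q})$, continuity of $s$ lets us compute $s^{-1}(A) \in \mathcal{A}(\mathbb{N})$ by enumerating those $n$ for which $s(n)$ turns up in $\mathbb{Q} \setminus A$; feeding this into $\C_\mathbb{N}$ yields $n \in s^{-1}(A)$, and $s(n) \in A$ is the required output. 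There is no genuine obstacle in either direction; the only technical point worth verifying carefully is that $\mathbb{Q} \setminus \mathbb{N}$ really is \emph{computably} (not merely topologically) open in $\mathbb{Q}$, which is handled by the integer-distance computation above.
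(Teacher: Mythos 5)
Your proposal follows exactly the paper's route: $\C_\mathbb{N} \leqW \C_\mathbb{Q}$ via $\mathbb{N}$ sitting as a computably closed subspace of $\mathbb{Q}$ together with \cite[Corollary 4.3]{paulybrattka}, and $\C_\mathbb{Q} \leqW \C_\mathbb{N}$ via a computable surjection $s : \mathbb{N} \to \mathbb{Q}$ together with \cite[Proposition 3.7]{paulybrattka}. The extra details you supply (computability of $\mathbb{Q} \setminus \mathbb{N} \in \mathcal{O}(\mathbb{Q})$ and the explicit preimage construction) are correct and simply spell out what the cited results encapsulate.
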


\begin{corollary}
\label{corr:crreduction}
If $f : \subseteq \mathbf{X} \mto \mathbb{Q}$ satisfies $f \leqW \C_\mathbb{R}$, then already $f \leqW \C_\mathbb{N}$.
\begin{proof}
As shown in \cite{paulybrattka} we have $\C_\mathbb{R} \equivW \C_\Cantor \star \C_\mathbb{N}$. We can thus apply Proposition \ref{prop:closedchoicetransfer} to conclude that $f \leqW \C_\mathbb{Q} \star \C_\mathbb{N}$, and then use Proposition \ref{prop:cqcn} together with $\C_\mathbb{N} \equivW \C_\mathbb{N} \star \C_\mathbb{N}$ from \cite{paulybrattka}.
\end{proof}
\end{corollary}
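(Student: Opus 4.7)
My plan is to string together the three ingredients already set up in the preceding lemmas: the factorization of $\C_\mathbb{R}$, the transfer principle of Proposition \ref{prop:closedchoicetransfer}, and the collapse of $\C_\mathbb{Q}$ to $\C_\mathbb{N}$. The arithmetic is short; the content is really just verifying that the hypotheses of Proposition \ref{prop:closedchoicetransfer} are available.

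First I would invoke the decomposition $\C_\mathbb{R} \equivW \C_\Cantor \star \C_\mathbb{N}$, which is cited from \cite{paulybrattka}. From $f \leqW \C_\mathbb{R}$ this gives $f \leqW \C_\Cantor \star \C_\mathbb{N}$. Next I would apply Proposition \ref{prop:closedchoicetransfer} with $\mathbf{Y} = \mathbb{Q}$ (a computable Hausdorff space, so the hypothesis is met) and $g = \C_\mathbb{N}$. The conclusion of that proposition is exactly $f \leqW \C_\mathbb{Q} \star \C_\mathbb{N}$, since the proposition lets us replace a $\C_\Cantor$ on the left by $\C_\mathbf{Y}$ whenever the target of $f$ is computable Hausdorff.

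Then I would rewrite $\C_\mathbb{Q}$ using Proposition \ref{prop:cqcn} to obtain $f \leqW \C_\mathbb{N} \star \C_\mathbb{N}$. Finally, the idempotency $\C_\mathbb{N} \equivW \C_\mathbb{N} \star \C_\mathbb{N}$ from \cite{paulybrattka} closes the chain to $f \leqW \C_\mathbb{N}$, as required.

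There is essentially no obstacle; the one place I would be careful is ensuring that Proposition \ref{prop:closedchoicetransfer} is being applied with the correct choice of parameters, namely that it is the target space of $f$ (here $\mathbb{Q}$) that must be computable Hausdorff, not any intermediate space, and that the ``$\star g$'' on the right-hand side of the reduction is preserved intact under the transfer. Both are immediate from the statement of that proposition, so the final write-up can be essentially a three-line chain of $\leqW$-inequalities with citations.
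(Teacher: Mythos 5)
Your proposal is correct and follows exactly the paper's own argument: decompose $\C_\mathbb{R} \equivW \C_\Cantor \star \C_\mathbb{N}$, transfer $\C_\Cantor$ to $\C_\mathbb{Q}$ via Proposition \ref{prop:closedchoicetransfer} (with $\mathbf{Y}=\mathbb{Q}$ computable Hausdorff and $g=\C_\mathbb{N}$), then collapse using Proposition \ref{prop:cqcn} and the idempotency $\C_\mathbb{N} \equivW \C_\mathbb{N} \star \C_\mathbb{N}$. No gaps; your care about which space must be Hausdorff matches the intended use of the transfer proposition.
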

\subsection{General observations}
\label{subsec:qb:general}
Our first result shows that overt choice for countably-based spaces is never able to provide non-computable discrete information:
\begin{proposition}
\label{prop:cbelimination}
Let $\mathbf{X}$ be effectively countably-based. If $f : \subseteq \Cantor \mto \mathbb{N} \leqW \mathrm{VC}_\mathbf{X}$, then $f$ is computable.
\begin{proof}
We pick a standard countable basis $(U_n)_{n \in \mathbb{N}}$ for $\mathbf{X}$, and assume $\mathbf{X}$ to be represented by the associated standard representation. Consider the computable outer reduction witness $K : \subseteq \Cantor \times \Baire \mto \mathbb{N}$. This gives rise to a computable sequence $(n_i,w_i,k_i)_{i \in \mathbb{N}}$ such that $K(p,q)$ can return $n$ iff $\exists i \in \mathbb{N} \ n = n_i \wedge \delta(q) \in U_{k_i} \wedge w_i \prec p$. Let $H : \subseteq \Cantor \to \mathcal{V}(\mathbf{X})$ be the inner reduction witness. Given $p \in \dom(f)$, start testing for all $i \in \mathbb{N}$ in parallel whether $U_{k_i}$ intersects $H(p) \in \mathcal{V}(\mathbf{X})$ and $w_i \prec p$. This has to be true for at least one $i \in \mathbb{N}$, and once we have found a suitable candidate, we can return $n_i$ as a correct output to $f(p)$.
\end{proof}
\end{proposition}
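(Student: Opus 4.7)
The plan is to trace through the continuity of the outer witness $K$ of the reduction $f \leqW \VC_\mathbf{X}$ and exploit the fact that $K$'s codomain is $\mathbb{N}$, so a successful output is certified by only finite information from both sides of the reduction. I begin by fixing an effective enumeration $(U_n)_{n \in \mathbb{N}}$ of a countable basis of $\mathbf{X}$, closed under finite intersections WLOG, and use the associated standard representation $\delta$, for which a name of $x$ is any enumeration of $\{n \mid x \in U_n\}$. The reduction supplies computable $H : \subseteq \Cantor \to \mathcal{V}(\mathbf{X})$ and $K : \subseteq \Cantor \times \Baire \to \mathbb{N}$ such that for every $p \in \dom(f)$ and every name $q$ of some point in $H(p)$, the value $K(p,q)$ lies in $f(p)$.

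From a word function for $K$ I extract a c.e.~sequence of triples $(n_i, w_i, k_i)_{i \in \mathbb{N}}$ with the property that $K(p,q)$ returns $n$ iff there is some $i$ with $n = n_i$, $w_i \prec p$, and $\delta(q) \in U_{k_i}$: the index $k_i$ is obtained by reading the finite initial segment of basis indices that $K$ has consumed from $q$ and replacing it with the basis index of their intersection. The algorithm is then to enumerate, in parallel across all $i$, the semi-decidable conjunction ``$w_i \prec p$ and $U_{k_i} \cap H(p) \neq \emptyset$'' (the second clause is semi-decidable from $H(p) \in \mathcal{V}(\mathbf{X})$), and to output $n_i$ for the first $i$ found.

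For correctness I argue two things. First, some such $i$ must eventually be found: since $H(p)$ is non-empty, pick any $x \in H(p)$ and any name $q$ of $x$; then $K(p,q)$ halts with some value $n \in f(p)$, and the triple firing this output witnesses the conjunction (with $x$ as the common point of $U_{k_i}$ and $H(p)$). Second, whenever such an $i$ is found, $n_i \in f(p)$: choose any $y \in U_{k_i} \cap H(p)$ and construct a name $q$ of $y$ that begins by listing precisely the basis indices whose intersection defines $k_i$; then by the extraction property the triple $(n_i, w_i, k_i)$ fires on $(p,q)$, so $K(p,q) = n_i$, and validity of the reduction gives $n_i \in f(p)$.

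The main subtlety, and the place that requires the most care, is the second correctness step: I must ensure that by scheduling the enumeration of $q$ so as to list the $k_i$-defining indices first, I really force $K(p,q)$ to return $n_i$ rather than some other value coming from a different triple firing earlier. This is handled by enumerating the triples ``minimally'' (so that any earlier firing triple $(n_j, w_j, k_j)$ on $(p,q)$ agrees in its output with $n_i$, forced by $K$ being single-valued) and by invoking the flexibility of the standard representation, which allows any permutation of $\{n \mid y \in U_n\}$ as a name of $y$.
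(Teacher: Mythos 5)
Your proposal is correct and follows essentially the same route as the paper's proof: extract finite-use triples $(n_i,w_i,k_i)$ from the outer witness, semi-decide in parallel whether $w_i \prec p$ and $U_{k_i}$ intersects $H(p)$, and output the corresponding $n_i$, justified by building a name of a point of $U_{k_i}\cap H(p)$ on which $K$ outputs $n_i$. The subtlety you flag at the end dissolves once the extraction records the literal finite prefix of the $q$-tape that $K$ consumed (rather than only the index $k_i$): any $y \in U_{k_i}$ has a name extending that exact prefix, and determinism of $K$ then yields $K(p,q)=n_i$ outright, which is the paper's (implicit) argument.
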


\begin{corollary}
For effectively countably-based $\mathbf{X}$, its overt choice $\mathrm{VC}_\mathbf{X}$ is not $\omega$-discriminative (in the sense of \cite{kreuzer}).
\end{corollary}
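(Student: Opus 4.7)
The plan is to derive this corollary as an immediate consequence of Proposition~\ref{prop:cbelimination}. First I would look up the precise definition of $\omega$-discriminative in \cite{kreuzer}; the natural formulation (and the one that makes this corollary an immediate follow-up to the preceding proposition) is that a multivalued function $g$ is $\omega$-discriminative if there exists some non-computable multivalued function $f : \subseteq \Cantor \mto \mathbb{N}$ with $f \leqW g$. Under this reading the argument is essentially one line: any $f : \subseteq \Cantor \mto \mathbb{N}$ with $f \leqW \mathrm{VC}_\mathbf{X}$ is computable by Proposition~\ref{prop:cbelimination}, so no non-computable witness to $\omega$-discriminativity can exist.

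Concretely, I would argue by contradiction. Assume $\mathrm{VC}_\mathbf{X}$ is $\omega$-discriminative and let $f : \subseteq \Cantor \mto \mathbb{N}$ be a non-computable multivalued function witnessing this, i.e.~$f \leqW \mathrm{VC}_\mathbf{X}$. Since $\mathbf{X}$ is effectively countably-based, Proposition~\ref{prop:cbelimination} applies and forces $f$ to be computable, contradicting the choice of $f$. Therefore no such witness exists, and $\mathrm{VC}_\mathbf{X}$ is not $\omega$-discriminative.

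The only real obstacle is verifying that Kreuzer's definition of $\omega$-discriminative is indeed of the shape described above, rather than, say, demanding a total function $\Cantor \to \NN$, or demanding a particular kind of uniform discrimination between countably many alternatives. If the actual definition is more restrictive, the argument still goes through because Proposition~\ref{prop:cbelimination} is stated in sufficient generality: it applies to any partial multivalued $f : \subseteq \Cantor \mto \NN$ below $\mathrm{VC}_\mathbf{X}$, regardless of totality or uniformity, so any stronger notion of $\omega$-discriminativity would \emph{a fortiori} entail the existence of a non-computable such $f$ and thereby lead to the same contradiction.
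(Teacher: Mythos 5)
Your argument is correct and coincides with the paper's (implicit) proof: the corollary is meant to follow immediately from Proposition~\ref{prop:cbelimination}, exactly as you argue. The definition in the cited reference is that a problem is $\omega$-discriminative iff $\mathrm{ACC}_\mathbb{N} \leqW f$, and since $\mathrm{ACC}_\mathbb{N}$ is non-computable and Weihrauch equivalent (after a trivial recoding of names) to a problem of type $\subseteq \Cantor \mto \mathbb{N}$, your hedging remark does cover the actual definition.
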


Besides the degrees of problems inspired by computability theory (which would often return Turing degrees as outputs), the investigations of specific Weihrauch degrees so far have not yet encountered non-computable yet not $\omega$-discriminative degrees. We thus see that for countably-based spaces with non-computable overt choice principles we find ourselves in an unexplored region of the Weihrauch lattice. We can go even further for sufficiently homogeneous spaces:

\begin{corollary}
\label{corr:pipevcx}
Let $\mathbf{X}$ be effectively countably-based such that every non-empty open subset contains a copy of $\mathbf{X}$. If $\VC_\mathbf{X}$ is non-computable, then $\VC_\mathbf{X} \pipeW \C_\mathbb{N}$.
\begin{proof}
 We can identify $A \in \mathcal{V}(\mathbf{X})$ with $\{n \in \mathbb{N} \mid I_n \cap A \neq \emptyset\} \in \mathcal{O}(\mathbb{N})$ for some canonical basis $(I_n)_{n \in \mathbb{N}}$ of $\mathbf{X}$. Since $\mathbf{X}$ by assumption embeds into any of its non-trivial basic open sets, we find that the result map $\VC_\mathbf{X} : \subseteq \mathcal{O}(\mathbb{N}) \mto \mathbf{X}$ is even non-computable everywhere, and hence Proposition \ref{prop:notbelowcn} lets us conclude $\VC_\mathbf{X} \nleqW \C_\mathbb{N}$. That $\C_\mathbb{N} \nleqW \VC_\mathbf{X}$ follows from Proposition \ref{prop:cbelimination}.
\end{proof}
\end{corollary}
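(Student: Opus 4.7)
The plan is to prove the two non-reducibilities $\VC_\mathbf{X} \nleqW \C_\mathbb{N}$ and $\C_\mathbb{N} \nleqW \VC_\mathbf{X}$ independently, each via one of the auxiliary results from Subsection~\ref{subsec:qb:aux}. The common setup is to represent $A \in \mathcal{V}(\mathbf{X})$ by its trace on the fixed effective basis $(I_n)_{n\in\mathbb{N}}$, namely the element $\{n \in \mathbb{N} \mid I_n \cap A \neq \emptyset\} \in \mathcal{O}(\mathbb{N})$. Under this identification $\VC_\mathbf{X}$ becomes a partial multivalued map $\subseteq \mathcal{O}(\mathbb{N}) \mto \mathbf{X}$, and the full set $\mathbb{N}$ lies in its domain, corresponding to $A = \mathbf{X}$.

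For the direction $\VC_\mathbf{X} \nleqW \C_\mathbb{N}$, I would invoke Proposition~\ref{prop:notbelowcn}, whose hypothesis requires $\VC_\mathbf{X}$, viewed as above, to be uncomputable everywhere. Fix a finite $F \subseteq \mathbb{N}$ and suppose the restriction of $\VC_\mathbf{X}$ to inputs $U \supseteq F$ were computable. Using the hypothesis that $\mathbf{X}$ embeds into every non-empty open subset, pick computable embeddings $\iota_n : \mathbf{X} \hookrightarrow I_n$ for each $n \in F$. Given any nonempty $B \in \mathcal{V}(\mathbf{X})$, form the overt set $B' = \bigcup_{n \in F} \iota_n(B)$, which is computable in $\mathcal{V}(\mathbf{X})$ and whose trace contains $F$. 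Applying the supposed algorithm to $B'$ produces a point $x \in \mathbf{X}$ lying in some $\iota_n(\overline{B})$; pulling back via the computable inverse of $\iota_n$ yields a point of $B$. This would make $\VC_\mathbf{X}$ computable overall, contradicting the hypothesis, so no such restriction can be computable and Proposition~\ref{prop:notbelowcn} applies.

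For the direction $\C_\mathbb{N} \nleqW \VC_\mathbf{X}$, I would apply Proposition~\ref{prop:cbelimination} more or less directly. Precomposing $\C_\mathbb{N}$ with a standard computable surjection $\Cantor \to \mathcal{O}(\mathbb{N})$ yields a partial multivalued function $\subseteq \Cantor \mto \mathbb{N}$. If $\C_\mathbb{N} \leqW \VC_\mathbf{X}$ held, then this precomposition would also lie below $\VC_\mathbf{X}$, hence be computable by Proposition~\ref{prop:cbelimination}, and the computability would transfer back to $\C_\mathbb{N}$ via the surjection, contradicting the well-known non-computability of $\C_\mathbb{N}$.

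The main obstacle is making the ``push $B$ into all basic opens indexed by $F$'' step rigorous: one must interpret the hypothesis ``contains a copy of $\mathbf{X}$'' in the effective sense so that each $\iota_n$ together with its inverse on its image is computable, confirm that finite union is a computable operation on $\mathcal{V}(\mathbf{X})$, and arrange to semi-decide which $\iota_n$-image contains the returned point. When the images $\iota_n(\mathbf{X})$ overlap or fail to be closed in $\mathbf{X}$, a cleaner variant is to embed $B$ via a single $\iota_{n_0}$ and pad with computable witness points in the remaining $I_n$'s; in either case the argument goes through once one commits to an effective reading of the homogeneity hypothesis.
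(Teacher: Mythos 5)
Your overall route is the same as the paper's: identify $A \in \mathcal{V}(\mathbf{X})$ with its trace $\{n \mid I_n \cap A \neq \emptyset\} \in \mathcal{O}(\mathbb{N})$, obtain $\VC_\mathbf{X} \nleqW \C_\mathbb{N}$ from Proposition~\ref{prop:notbelowcn} by checking that the trace form of $\VC_\mathbf{X}$ is uncomputable everywhere, and obtain $\C_\mathbb{N} \nleqW \VC_\mathbf{X}$ from Proposition~\ref{prop:cbelimination} (your recoding of $\C_\mathbb{N}$ into a problem of type $\subseteq \Cantor \mto \mathbb{N}$ is fine and is exactly what the paper tacitly does). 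The paper leaves the ``uncomputable everywhere'' verification implicit; your union-of-copies argument $B \mapsto \overline{\bigcup_{n \in F}\iota_n(B)}$ is the natural way to make it explicit, and your caveat that the homogeneity hypothesis must be read effectively (computable embeddings with computable inverses on their images, as indeed hold for $\mathbb{Q}$ and $S_0$, the spaces this corollary is applied to) correctly identifies what the paper glosses over.

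The genuine problem is your proposed repair for the case where the copies $\iota_n(\mathbf{X})$ are not closed. The padding variant does not work: if $B' = \overline{\iota_{n_0}(B)} \cup \{w_n \mid n \in F \setminus \{n_0\}\}$ with dummy witness points $w_n \in I_n$, a realizer of the restricted problem may legitimately output one of the $w_n$, which carries no information whatsoever about $B$, so no reduction of the unrestricted $\VC_\mathbf{X}$ is obtained (moreover, computable points of $I_n$ need not exist at all). The closure issue has to be resolved inside your first argument instead: the returned point lies in $\bigcup_{n \in F}\overline{\iota_n(B)}$, and to apply an inverse you need it to lie in some $\iota_n(B)$ itself; this requires choosing the copies to be \emph{closed} in $\mathbf{X}$ (so that $\iota_n(B)$ is closed), pairwise disjoint, and such that you can effectively determine which copy the returned point belongs to (for $\mathbb{Q}$ the copies can be taken clopen, for $S_0$ the copies $v\mathbb{N}^*$ are closed and membership in the other copies is refutable, which suffices since $F$ is finite). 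With this reading of ``contains a copy of $\mathbf{X}$'' your first variant goes through and matches the paper's intent; without it, neither of your two variants does, so the claim that ``in either case the argument goes through'' is an overstatement. A minor additional normalization: arrange the basis so that every $I_n$ is non-empty, both so that $\mathbb{N}$ lies in the domain of the trace form (as Proposition~\ref{prop:notbelowcn} requires) and so that uncomputability everywhere is not vacuously destroyed by indices of empty basic opens.
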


We can actually obtain some upper bounds for overt choice on countably-based spaces. Recall that $\Pi^0_2\C_\mathbf{X}$ takes as input a non-empty $\Pi^0_2$-subset of $\mathbf{X}$ (coded in the usual way via an appropriate Borel code), and outputs an element of that set.

\begin{proposition}
\label{prop:vcandpi2}
Let $s : \mathbf{X} \to \mathbf{Y}$ be a computable surjection, and $\mathbf{Y}$ be effectively countably-based. Then $\VC_\mathbf{Y} \leqW \Pi^0_2\C_\mathbf{X}$.
\begin{proof}
Let $(U_n)_{n \in \mathbb{N}}$ be an effective countable basis of $\mathbf{Y}$. Given $A \in \mathcal{V}(\mathbf{Y})$ we can compute $\{x \in \mathbf{X} \mid \forall n \in \mathbb{N} \ s(x) \in U_n \Rightarrow U_n \cap A \neq \emptyset\} \in \Pi^0_2(\mathbf{X})$, apply $\Pi^0_2\C_\mathbf{X}$ to obtain an element $x_0$ of that set, and then notice that $s(x_0) \in A$.
\end{proof}
\end{proposition}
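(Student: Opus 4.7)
The approach is to turn a $\mathcal{V}$-name of a non-empty $A \in \mathcal{V}(\mathbf{Y})$ into a $\Pi^0_2$-code of a non-empty subset $B_A \subseteq \mathbf{X}$ that $s$ maps entirely into $A$; one application of $\Pi^0_2\C_\mathbf{X}$ then extracts some $x_0 \in B_A$, and $s(x_0)$ is the desired output in $A$.

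To define $B_A$, I would fix an effective countable basis $(U_n)_{n \in \mathbb{N}}$ of $\mathbf{Y}$, so that $A$ is presented by $I_A := \{n \in \mathbb{N} \mid U_n \cap A \neq \emptyset\} \in \mathcal{O}(\mathbb{N})$, and set
\[
B_A \;:=\; \{x \in \mathbf{X} \mid \forall n \in \mathbb{N},\ s(x) \in U_n \Rightarrow n \in I_A\}.
\]
Three properties are then to be checked in turn: (a) $B_A \neq \emptyset$, by picking any $y \in A$ and any $x \in s^{-1}(\{y\})$ using surjectivity of $s$, noting that whenever $s(x) = y \in U_n$ the point $y$ itself witnesses $U_n \cap A \neq \emptyset$; (b) $s(B_A) \subseteq A$, since if $x_0 \in B_A$ yet $s(x_0) \notin A$, closedness of $A$ would yield a basic $U_n \ni s(x_0)$ with $U_n \cap A = \emptyset$, i.e.\ $n \notin I_A$, contradicting $x_0 \in B_A$; and (c) a $\Pi^0_2$-code for $B_A$ is uniformly computable from the $\mathcal{V}$-name of $A$.

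The only delicate point is (c): the predicate ``$n \notin I_A$'' is only co-semi-decidable from an overt name of $A$, so $B_A$ is not directly expressible as a computable intersection of open sets. The trick I would use is to describe the complement as
\[
\mathbf{X} \setminus B_A \;=\; \bigcup_{n \in \mathbb{N}} \bigl( s^{-1}(U_n) \setminus W_n \bigr),
\]
where $W_n \in \mathcal{O}(\mathbf{X})$ is the open set that ``fills up to $\mathbf{X}$'' as soon as $n$ is enumerated into $I_A$, and otherwise remains $\emptyset$. Each $W_n$ is uniformly computable as an element of $\mathcal{O}(\mathbf{X})$ from the $\mathcal{V}$-name of $A$ (it only requires semi-deciding $n \in I_A$), and each $s^{-1}(U_n) \in \mathcal{O}(\mathbf{X})$ is uniformly computable from $n$ since $s$ is computable. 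Together these exhibit $\mathbf{X} \setminus B_A$ in standard $\Sigma^0_2$-form with a uniformly computable code, and hence yield a uniformly computable $\Pi^0_2$-code of $B_A$. Feeding this code into $\Pi^0_2\C_\mathbf{X}$ and applying $s$ completes the reduction.
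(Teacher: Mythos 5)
Your proposal is correct and follows essentially the same route as the paper: it uses exactly the set $\{x \in \mathbf{X} \mid \forall n,\ s(x) \in U_n \Rightarrow U_n \cap A \neq \emptyset\}$, applies $\Pi^0_2\C_\mathbf{X}$, and pushes the result forward with $s$. You merely spell out the details the paper leaves implicit, in particular the uniform computation of the $\Pi^0_2$-code via the $\Sigma^0_2$-presentation $\bigcup_n\bigl(s^{-1}(U_n)\setminus W_n\bigr)$ of the complement, which is a valid rendering of the paper's claim that the set is computable in $\Pi^0_2(\mathbf{X})$.
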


\begin{corollary}
\label{corr:cbaireupper}
Let $\mathbf{X}$ be a $\Sigma^1_1$-subspace of $\mathcal{O}(\mathbb{N})$. Then relative to some oracle it holds that $\VC_\mathbf{X} \leqW \C_\Baire$.
\begin{proof}
A $\Sigma^1_1$-subspace is the range of a continuous surjection from $\Baire$. This is computable relative to some oracle, and the relativization of Proposition \ref{prop:vcandpi2} then gives $\VC_\mathbf{X} \leqW \Pi^0_2\C_\Baire$. That $\Pi^0_2\C_\Baire \equivW \C_\Baire$ is straight-forward, it was observed e.g.~in \cite{pauly-kihara4}.
\end{proof}
\end{corollary}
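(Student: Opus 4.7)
The plan is to assemble the statement from the two ingredients already at hand: the relativization of Proposition~\ref{prop:vcandpi2}, which converts a computable surjection from a nice space onto $\mathbf{X}$ into an upper bound of the form $\Pi^0_2\C$, and the known collapse $\Pi^0_2\C_\Baire \equivW \C_\Baire$. So the only real work is producing the computable surjection $s:\Baire \to \mathbf{X}$ and verifying that the hypotheses of Proposition~\ref{prop:vcandpi2} are met (up to relativization).

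First I would recall the standard fact that a non-empty $\Sigma^1_1$-subset of a Polish (in fact, quasi-Polish) space is, by definition, the image of $\Baire$ under a continuous function. Applied to $\mathbf{X} \subseteq \mathcal{O}(\mathbb{N})$, this yields a continuous surjection from $\Baire$ onto $\mathbf{X}$. By picking an oracle that encodes both this surjection and a realizer of the inclusion $\mathbf{X} \hookrightarrow \mathcal{O}(\mathbb{N})$, we obtain a computable surjection $s:\Baire \to \mathbf{X}$ relative to that oracle. Using the same oracle, $\mathbf{X}$ inherits effective countable basedness from $\mathcal{O}(\mathbb{N})$: the traces of the standard basis of $\mathcal{O}(\mathbb{N})$ to $\mathbf{X}$ form an effective basis once we have computable access to the inclusion.

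With these data in place, the relativization of Proposition~\ref{prop:vcandpi2} (with $\mathbf{Y} = \mathbf{X}$ and source space $\Baire$) gives $\VC_\mathbf{X} \leqW \Pi^0_2\C_\Baire$ relative to the chosen oracle. To conclude, I would invoke the easy observation $\Pi^0_2\C_\Baire \equivW \C_\Baire$ referenced in \cite{pauly-kihara4}: a non-empty $\Pi^0_2$-subset of $\Baire$ is computably homeomorphic (via the usual tree-of-extensions construction that turns the $\Pi^0_2$-code into a pruned tree) to the set of branches through a computably obtained subtree of $\mathbb{N}^{<\mathbb{N}}$, so closed choice on $\Baire$ suffices.

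The main potential obstacle is purely bookkeeping: making sure the oracle is chosen once and uniformly absorbs (i) the continuous surjection from $\Baire$ witnessing $\Sigma^1_1$-ness, (ii) the effective basis of $\mathbf{X}$, and (iii) the computations used inside Proposition~\ref{prop:vcandpi2}. Since all three are countably presented continuous data, a single oracle coding all of them works, and the chain $\VC_\mathbf{X} \leqW \Pi^0_2\C_\Baire \equivW \C_\Baire$ then holds relative to that oracle, which is exactly the claim.
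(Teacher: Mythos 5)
Your proposal follows the paper's own argument exactly: realize the $\Sigma^1_1$-subspace as the range of a continuous surjection from $\Baire$, absorb that surjection (and the basis data for $\mathbf{X}$ as a subspace of $\mathcal{O}(\mathbb{N})$) into an oracle, apply the relativized Proposition~\ref{prop:vcandpi2} to get $\VC_\mathbf{X} \leqW \Pi^0_2\C_\Baire$, and conclude with $\Pi^0_2\C_\Baire \equivW \C_\Baire$. The extra bookkeeping you supply (the inherited effective basis and the tree-of-extensions justification of the collapse) is correct and only fleshes out details the paper leaves implicit.
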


\begin{corollary}
\label{corr:vcqpi02}
Let $\mathbf{X}$ be effectively countable and effectively countably-based. Then $\VC_\mathbf{X} \leqW \Pi^0_2\C_\mathbb{N}$.
\end{corollary}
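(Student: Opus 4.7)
The plan is to read this off as an immediate specialization of Proposition \ref{prop:vcandpi2}. Effective countability of $\mathbf{X}$ is to be interpreted in the standard way, as the existence of a computable surjection $s : \mathbb{N} \to \mathbf{X}$. Applying Proposition \ref{prop:vcandpi2} with source space $\mathbb{N}$ and target space $\mathbf{X}$ (which is effectively countably-based by hypothesis) then delivers $\VC_\mathbf{X} \leqW \Pi^0_2\C_\mathbb{N}$ with no further argument.

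To recall the mechanism produced by Proposition \ref{prop:vcandpi2}: given $A \in \mathcal{V}(\mathbf{X})$ presented by the enumeration of those basic opens $U_n$ meeting $A$, the reduction forms the set
\[ S_A = \{ k \in \mathbb{N} \mid \forall n \in \mathbb{N}. \ s(k) \in U_n \Rightarrow U_n \cap A \neq \emptyset \}, \]
which is $\Pi^0_2$ in $k$ uniformly in the data. Since $s$ is surjective and $A$ is non-empty, any $k$ with $s(k) \in A$ witnesses $S_A \neq \emptyset$. An application of $\Pi^0_2\C_\mathbb{N}$ then yields some $k_0 \in S_A$, and $s(k_0) \in A$ follows because every basic neighborhood of $s(k_0)$ meets the closed set $A$.

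Since the claim is a one-line consequence of Proposition \ref{prop:vcandpi2}, there is no genuine obstacle; the only point requiring care is to align the two ingredients in ``effectively countable and effectively countably-based'' (a computable surjection from $\mathbb{N}$ together with an effective countable basis) with the two corresponding hypotheses of that proposition.
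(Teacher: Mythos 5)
Your proposal is correct and is exactly the intended argument: the corollary is a direct specialization of Proposition \ref{prop:vcandpi2}, taking the source space to be $\mathbb{N}$ via the computable surjection $s : \mathbb{N} \to \mathbf{X}$ supplied by effective countability, with $\mathbf{X}$ itself as the effectively countably-based target. Your recollection of the mechanism (the $\Pi^0_2$ set $S_A$, its non-emptiness from surjectivity, and $s(k_0) \in A$ since $A$ is closed) matches the paper's proof of that proposition.
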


\subsection{Overt choice for specific spaces}
\label{subsec:qb:specific}
We consider overt choice for two specific countably-based yet not quasi-Polish spaces. The first space is $\mathbb{Q}$, seen as a subspace of $\mathbb{R}$. The second specimen is the space $S_0$ defined as follows:

\begin{definition}
The underlying set of $S_0$ is $\mathbb{N}^*$, the set of finite sequences of natural numbers (including the empty word $\varepsilon$). The topology is generated by the sets $\{\{u \in \mathbb{N}^* \mid u \nsucceq w\} \mid w \in \mathbb{N}^*\}$. This means membership in a basic open set provides the knowledge of finitely many words which are not a prefix of the given point.
\end{definition}

Our choice of studying these specific spaces is not completely arbitrary: They both belong to the four canonic counter-examples for being quasi-Polish (meaning that a coanalytic subspace of $\mathcal{O}(\mathbb{N})$ is either quasi-Polish or contains a $\boldp^0_2$ copy of one of the canonic counter-examples) \cite{debrecht8}. Given our interest in the continuity of overt choice as a completeness notion, this makes these spaces high priority targets for classification.

Since $\mathbb{Q}$ respectively $S_0$ is computably isomorphic $\mathbb{Q} \times \mathbb{Q}$ respectively $S_0 \times S_0$, we can conclude the following from Corollary \ref{corr:vcproducts}:

\begin{corollary}
$\VC_\mathbb{Q} \equivW \VC_\mathbb{Q}^*$ and $\VC_{S_0} \equivW \VC_{S_0}^*$.
\end{corollary}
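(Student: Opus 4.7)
The plan is to derive both equivalences directly from Corollary~\ref{corr:vcproducts} together with the stated computable self-isomorphisms $\mathbb{Q} \cong \mathbb{Q} \times \mathbb{Q}$ and $S_0 \cong S_0 \times S_0$. Writing $\mathbf{X}$ for either $\mathbb{Q}$ or $S_0$, the direction $\VC_\mathbf{X} \leqW \VC_\mathbf{X}^*$ holds trivially, as $f \leqW f^*$ for every multivalued function $f$. The real content is thus the converse $\VC_\mathbf{X}^* \leqW \VC_\mathbf{X}$.

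I would first establish the finitary bound $\VC_\mathbf{X}^n \leqsW \VC_\mathbf{X}$ for every $n \geq 1$, uniformly in $n$. Iterating Corollary~\ref{corr:vcproducts} yields $\VC_\mathbf{X}^n \leqsW \VC_{\mathbf{X}^n}$. The given computable homeomorphism $\phi : \mathbf{X} \to \mathbf{X} \times \mathbf{X}$ can be iterated to a computable homeomorphism $\phi_n : \mathbf{X} \to \mathbf{X}^n$ by the obvious recursion $\phi_1 = \id$ and $\phi_{n+1}(x) = (\pi_1\phi(x), \phi_n(\pi_2\phi(x)))$, which is manifestly uniform in $n$. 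Combined with the earlier corollary that computable isomorphism of spaces induces Weihrauch equivalence of the associated overt choice problems, this gives $\VC_{\mathbf{X}^n} \equivW \VC_\mathbf{X}$ uniformly in $n$, and hence $\VC_\mathbf{X}^n \leqsW \VC_\mathbf{X}$ uniformly in $n$.

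To finish, the uniform family of reductions assembles into the single reduction $\VC_\mathbf{X}^* \leqW \VC_\mathbf{X}$: from an input to $\VC_\mathbf{X}^*$ one reads off the parameter $n$, applies the uniform reduction witness for $\VC_\mathbf{X}^n$ to convert the $n$ overt-choice instances into a single instance over $\mathbf{X}$, invokes the oracle $\VC_\mathbf{X}$, and then decodes the resulting point into an $n$-tuple via $\phi_n^{-1}$. I do not anticipate any serious obstacle; the argument hinges only on the transparent uniformity of the iteration $\mathbf{X} \cong \mathbf{X}^n$ in $n$, and the same reasoning applies verbatim to both $\mathbb{Q}$ and $S_0$.
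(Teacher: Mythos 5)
Your proposal is correct and follows essentially the same route as the paper, which derives the corollary in one line from the computable isomorphisms $\mathbb{Q} \cong \mathbb{Q}\times\mathbb{Q}$ and $S_0 \cong S_0\times S_0$ together with Corollary~\ref{corr:vcproducts}; you merely spell out the uniformity in $n$ needed to pass from the finite products to the parallelization $\VC_\mathbf{X}^*$, which is implicit in the paper's argument.
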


Overt choice on $\mathbb{Q}$ was already studied by Brattka in \cite{brattka14} (without having the modern terminology available) and shown to be uncomputable. Together with the result from \cite{presser} that overt choice on Polish spaces is continuous, and the Hurewicz dichotomy stating that a coanalytic separable metric space is either Polish or has a copy of $\mathbb{Q}$ as a closed subspace, it already follows that:

\begin{corollary}[\footnote{Note that Corollary \ref{corr:overtchoiceqpcharac} above shows that the restriction to coanalytic spaces is not actually necessary here.}]
For a coanalytic separable metric space $\mathbf{X}$ we find that $\VC_\mathbf{X}$ is continuous iff $\mathbf{X}$ is Polish.
\end{corollary}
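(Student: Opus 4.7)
The statement is an "iff", so I will treat the two directions separately, and in both cases I will just assemble results already established in the paper or cited from the literature. The proof requires no new technical content beyond plugging in the right pieces.

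\textbf{Forward direction.} Suppose $\mathbf{X}$ is Polish. Since a computable Polish space is (pre)computably quasi-Polish, and Polish-ness is preserved under relativization to an oracle, $\mathbf{X}$ is precomputably quasi-Polish relative to some oracle. I then invoke the relativization of Theorem \ref{theo:overtchoicequasipolish} to conclude that $\VC_\mathbf{X}$ has a continuous realizer. (Alternatively, one can cite Presser's result from \cite{presser} directly, which is the analogue for Polish spaces.)

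\textbf{Backward direction (contrapositive).} Suppose $\mathbf{X}$ is a coanalytic separable metric space that is \emph{not} Polish. By the Hurewicz dichotomy (in the form stated in the text just before the corollary), $\mathbf{X}$ contains a closed subspace $\mathbf{Q}'$ homeomorphic to $\mathbb{Q}$. Relative to a suitable oracle, this closed copy is computably closed, so the relativization of the closed-subspace proposition (the proposition in Section 3 stating $\VC_\mathbf{X} \leqW \VC_\mathbf{Y}$ when $\mathbf{X}$ is computably closed in $\mathbf{Y}$) gives $\VC_{\mathbf{Q}'} \leqWcont \VC_\mathbf{X}$. Combined with the isomorphism corollary applied to $\mathbf{Q}' \cong \mathbb{Q}$, this yields $\VC_\mathbb{Q} \leqWcont \VC_\mathbf{X}$. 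Now $\mathbb{Q}$ is a countably-based $T_1$-space that fails to be quasi-Polish (any quasi-Polish metrizable space is Polish, and $\mathbb{Q}$ is not Polish). Hence Corollary \ref{corr:overtchoiceqpcharac} tells us that $\VC_\mathbb{Q}$ is discontinuous, and this discontinuity transfers upward along the Weihrauch reduction to $\VC_\mathbf{X}$.

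\textbf{Main difficulty.} There is essentially no technical obstacle: everything reduces to citing (i) Theorem \ref{theo:overtchoicequasipolish}, (ii) the closed-subspace transfer for $\VC$, (iii) Corollary \ref{corr:overtchoiceqpcharac} (or Brattka's discontinuity of $\VC_\mathbb{Q}$ from \cite{brattka14}), and (iv) the Hurewicz dichotomy. The only small care needed is to relativize Theorem \ref{theo:overtchoicequasipolish} and the closed-subspace proposition to \emph{continuous} rather than \emph{computable} reductions, which is routine since all constructions involved are uniform in the representations. The footnote attached to the corollary already observes that, thanks to Corollary \ref{corr:overtchoiceqpcharac}, the coanalyticity hypothesis is unnecessary; but in this proof it is what enables the use of the Hurewicz dichotomy, which is the only reason the hypothesis appears.
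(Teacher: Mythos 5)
Your proposal is correct and follows essentially the same route as the paper: Presser's result (equivalently the relativized Theorem \ref{theo:overtchoicequasipolish}) for the Polish direction, and the Hurewicz dichotomy plus the closed-subspace transfer plus the discontinuity of $\VC_\mathbb{Q}$ for the converse. Your substitution of Corollary \ref{corr:overtchoiceqpcharac} for Brattka's result as the source of the discontinuity of $\VC_\mathbb{Q}$ is a harmless variant, not a different argument.
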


The starting point of our investigation of the degree of $\VC_\mathbb{Q}$ will be to introduce a somewhat more accessible problem defined on trees. We say that a tree $T \subseteq \{0,1\}^*$ has eventually constant paths everywhere, if for each $w \in T$ there is $u \succ w$ and $b \in \{0,1\}$ such that for all $n \in \mathbb{N}$ we have $ub^n \in T$. In words, every vertex in the tree can be extended to a path that eventually goes always right or always left. The principle $\mathrm{ECP}$ has to find such an eventually constant path from an enumeration of the tree.

\begin{definition}
Let $\mathrm{ECP} : \subseteq \mathcal{O}(\{0,1\}^*) \mto \Cantor$ be defined by $T \in \dom(\mathrm{ECP})$ iff $T \neq \emptyset$ has eventually constant paths everywhere, and $p \in \mathrm{ECP}(T)$ if $p = w0^\omega$ or $p = w1^\omega$ for some $w \in \{0,1\}$ and $\forall n \in \mathbb{N} \ p_{\leq n} \in T$.
\end{definition}

\begin{proposition}
\label{prop:ecpnc}
$\mathrm{ECP}$ is not computable.
\begin{proof}
We describe a strategy how to construct an input on which a putative algorithm fails. Start by enumerating longer and longer prefixes of $01^\omega$. If the algorithm does not eventually output $01$, then continuing to output all prefixes of $01^\omega$ makes the algorithm fail. If the algorithm output $01$ at the moment where the longest prefix enumerated so far is $01^{k_0}$, then we enumerate all $0^n$, as well as longer and longer prefixes of $01^{k_0}0^\omega$. The algorithm has to output $01^{k_0}0$ eventually. At that moment, we enumerate all prefixes of $01^\omega$, and start enumerating prefixes of $01^{k_0}0^{k_1}1^\omega$, where $01^{k_0}0^{k_1}$ is the longest prefix of $01^{k_0}0^\omega$ we enumerated so far. Continuing this process will force the algorithm to either deviate at some stage and fail, or to output a sequence with infinitely many alternations between $0$ and $1$, and hence fail, too.
\end{proof}
\end{proposition}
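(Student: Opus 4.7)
The plan is to build an adversary argument: given a putative computable algorithm $M$ solving $\mathrm{ECP}$, I would describe a computable enumeration strategy that produces, based on the bits $M$ outputs, a tree $T \in \dom(\mathrm{ECP})$ on which $M$'s output is not a valid eventually constant path of $T$. The overall idea is to maintain a current ``working spine'' that is being actively extended, while every previously considered spine is kept alive only up to a fixed level and then redirected into a constant tail in the opposite bit, so that the tree stays in $\dom(\mathrm{ECP})$ at every finite stage no matter when the construction halts.

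First I would start enumerating the prefixes of $01^\omega$ into $T$. If $M$ never commits to outputting a second $1$ past its initial $0$, the full sequence $01^\omega$ gives a valid tree in $\dom(\mathrm{ECP})$ on which $M$'s output fails to be a complete eventually constant path. Otherwise, at the first stage where $M$ has output the bits $01$, let $k_0$ be the current maximal $1$-block length. I would then switch direction and begin enumerating prefixes of $01^{k_0}0^\omega$, while still enumerating all remaining prefixes of $01^\omega$ so that the $01^\omega$ branch itself survives as an eventually constant path in the final tree. Since $M$ is committed to the prefix $01$, to produce a valid output it must eventually commit to $01^{k_0}0$; at that stage, with $k_1$ the current $0$-block length, I iterate the construction, taking $01^{k_0}0^{k_1}1^\omega$ as the next candidate spine while continuing to flesh out $01^{k_0}0^\omega$ fully.

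Iterating, the final tree $T$ contains the eventually constant paths $01^\omega$, $01^{k_0}0^\omega$, $01^{k_0}0^{k_1}1^\omega$, $\ldots$, and every node of $T$ extends to one of them, so $T \in \dom(\mathrm{ECP})$. Meanwhile, $M$'s output is forced to alternate between $0$ and $1$ infinitely often and hence cannot have the shape $w0^\omega$ or $w1^\omega$; alternatively, if at some stage $M$ refuses to produce the forced next bit, the construction freezes the active spine into a constant tail that $M$'s output does not match. The main obstacle is the bookkeeping verification: one must confirm that at every finite stage the partial tree enumerated so far is extendible to something in $\dom(\mathrm{ECP})$, that earlier spines are properly sealed with their opposite-bit tails before each switch, and that the adversary strategy is itself computable in the history of $M$'s outputs so that it really refutes every putative algorithm uniformly.
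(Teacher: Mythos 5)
Your overall plan is the same diagonalization as the paper's: keep an active spine, switch it each time the algorithm commits, and arrange that abandoned spines become eventually constant branches so the limit tree lies in $\dom(\mathrm{ECP})$. However, there is a genuine gap in \emph{when} you complete an abandoned spine, and it destroys the forcing your argument relies on. You complete the old spine at the moment of the switch: when $M$ has output $01$ you begin enumerating $01^{k_0}0^\omega$ \emph{while also} enumerating all remaining prefixes of $01^\omega$, and you then claim that $M$ ``must eventually commit to $01^{k_0}0$''. For the tree you are building this claim is false: since the full branch $01^\omega$ is guaranteed to appear, the algorithm that simply keeps emitting $1$'s after $01$ produces the output $01^\omega$, which is a legitimate element of $\mathrm{ECP}(T)$ for your limit tree, and your adversary never acts again. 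The same objection recurs at every later switch (you ``flesh out $01^{k_0}0^\omega$ fully'' at the very moment you start $01^{k_0}0^{k_1}1^\omega$), so the strategy fails against the trivial algorithm that forever repeats its last output bit; nothing forces the infinitely many alternations your conclusion needs.

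The paper's construction avoids this by truncating the old spine at the switch: after $M$ outputs $01$, the tree is grown only along $0^\omega$ and $01^{k_0}0^\omega$, so every infinite eventually constant path extending $01$ must pass through $01^{k_0}0$, and $M$ is genuinely forced either to output that bit or to stall or leave the tree and fail. Only \emph{after} $M$ has committed past the branch point does the paper complete the abandoned branch $01^\omega$; this is harmless for the forcing but is needed so that, when infinitely many switches occur, every node of the limit tree still extends to an eventually constant path. So both ingredients you half-have are required: abandoned spines must eventually be completed (you do this), but only strictly after the algorithm has emitted the forced alternating bit (you do it too early). With that one change in scheduling, your argument becomes the paper's proof.
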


\begin{proposition}
\label{prop:ecpvcq}
$\mathrm{ECP} \equivW \mathrm{VC}_\mathbb{Q}$.
\begin{proof}
We construct an overt subset of $[0,1] \cap \mathbb{Q}$ from the enumeration of the tree. Let $(q_n)_{n \in \mathbb{N}}$ be some standard enumeration of $[0,1] \cap \mathbb{Q}$. We construct a basis $(B_w)_{w \in \{0,1\}^*}$ of $[0,1] \cap \mathbb{Q}$ as follows: $B_\varepsilon := [0,1] \cap \mathbb{Q}$. Once $B_{wb} = (a,b) \cap \mathbb{Q}$ is defined, pick some irrational $\tau \in (\frac{2}{3}a + \frac{1}{3}b, \frac{1}{3}a + \frac{2}{3}b)$. Of the intervals $(a,\tau) \cap \mathbb{Q}$ and $(\tau,b) \cap \mathbb{Q}$ one contains the least rational (w.r.t. $(q_n)_{n \in \mathbb{N}}$). If it is $(a,\tau) \cap \mathbb{Q}$ ,then $B_{wbb} = (a,\tau) \cap \mathbb{Q}$ and $B_{wb\overline{b}} = (\tau,b) \cap \mathbb{Q}$ (here $\overline{b}$ denotes the complementary bit to $b$), otherwise the intervals are assigned in reversed roles.

The neighborhood filter of some $q \in \mathbb{Q} \cap [0,1]$ now corresponds to an eventually constant element $p \in \Cantor$, and an overt set in $\mathcal{V}(\mathbb{Q} \cap [0,1])$ corresponds to an enumeration of a tree where each vertex can be extended into an eventually constant path. This constitutes the desired equivalence.
\end{proof}
\end{proposition}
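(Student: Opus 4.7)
The plan is to prove the two reductions $\mathrm{ECP} \leqW \mathrm{VC}_{\mathbb{Q}}$ and $\mathrm{VC}_{\mathbb{Q}} \leqW \mathrm{ECP}$ through a single concrete correspondence between rationals in $[0,1]$ and eventually constant points of $\Cantor$, using the basis $(B_w)_{w \in \{0,1\}^*}$ sketched in the statement. I would first verify the following properties of the construction: (a) each $B_w = (a,b) \cap \mathbb{Q}$ has irrational endpoints for $w \neq \varepsilon$, so no rational ever coincides with a splitting boundary; (b) $B_{w0}$ and $B_{w1}$ partition the rationals in $B_w$; (c) since each split chooses $\tau$ in the middle third, the diameters shrink by a factor of at least $2/3$ per level; and (d) once a rational $q_k$ becomes the lowest-indexed rational inside some $B_u$ whose final bit is $b$, the splitting rule forces $q_k \in B_{ubb}$, where it remains lowest-indexed (every competitor has a strictly higher index). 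By induction $q_k \in B_{ub^m}$ for all $m$, and since the diameters shrink to $0$ we get $\bigcap_m B_{ub^m} = \{q_k\}$. Consequently the filter of every rational is eventually constant, and conversely every eventually constant $p = ub^\omega \in \Cantor$ is the filter of a unique rational $q_p := \bigcap_m B_{ub^m}$.

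For $\mathrm{VC}_{\mathbb{Q}} \leqW \mathrm{ECP}$, I take an overt $A \in \mathcal{V}(\mathbb{Q} \cap [0,1])$ and form the tree $T := \{w \in \{0,1\}^* \mid B_w \cap A \neq \emptyset\}$, which is obtained by relabelling the overt presentation of $A$. Every $w \in T$ contains some $q \in A$, whose eventually constant filter extends $w$ and lies entirely in $T$, so $T$ has eventually constant paths everywhere. Applying $\mathrm{ECP}$ yields a path $p = ub^\omega \in [T]$, from which the corresponding rational $q_p$ is produced as a rapidly convergent Cauchy sequence by outputting the midpoints of the nested intervals $B_{ub^m}$. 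The final verification that $q_p \in A$ uses closedness of $A$: otherwise some basic neighbourhood $B_{p_{\leq n}}$ of $q_p$ would be disjoint from $A$, contradicting $p_{\leq n} \in T$.

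For $\mathrm{ECP} \leqW \mathrm{VC}_{\mathbb{Q}}$, given a tree $T$ with eventually constant paths everywhere, I define $A := \{q \in \mathbb{Q} \cap [0,1] \mid \text{the filter of } q \text{ is a path in } T\}$. Then $B_w \cap A \neq \emptyset$ iff $w \in T$: the direction $\Leftarrow$ uses that each $w \in T$ extends to an eventually constant path $p$ in $T$ with $q_p \in B_w \cap A$; the direction $\Rightarrow$ is immediate since any $q \in A \cap B_w$ has $w$ as a prefix of its (in-$T$) filter. An enumeration of $T$ therefore yields an overt presentation of $A$, and the same filter argument shows $A$ is closed in $\mathbb{Q} \cap [0,1]$. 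Apply $\mathrm{VC}_{\mathbb{Q}}$ to obtain $q \in A$, and read off its filter bit by bit: at each level one decides whether $q$ lies below or above the irrational split-point $\tau$, which is decidable from a Cauchy name of $q$ together with a computable witness of irrationality for $\tau$.

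The main obstacle is to make property (d) watertight so that the bijection between rationals and eventually constant paths holds in both directions simultaneously. The ``continuing bit equals direction of currently lowest-indexed rational'' rule must both trap every rational along a tail of continuing bits (giving the map rational-to-path) and prevent any eventually constant path in the filter tree from failing to be realised by a rational (giving the inverse map). Once this combinatorial core is set up carefully, the remaining content is routine translation between overt subsets, their associated trees, and the eventually constant paths of those trees.
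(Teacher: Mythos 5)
Your proposal is correct and follows essentially the same route as the paper: the identical bisection basis with irrational split points and the ``continue the last bit towards the least-indexed rational'' rule, yielding the correspondence between rationals and eventually constant paths, and between overt subsets of $\mathbb{Q}\cap[0,1]$ and trees with eventually constant paths everywhere; your write-up simply spells out both reductions (and the needed closedness and decidability-of-comparison-with-$\tau$ points) that the paper leaves as a sketch. The only blemishes are cosmetic and shared with the paper's own sketch (the treatment of the rational endpoints $0,1$ at the top level, the implicit identification of $\mathbb{Q}$ with $\mathbb{Q}\cap[0,1]$, and an off-by-one in writing $B_{ubb}$ for the continuing child of $B_u$).
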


\begin{corollary}
\label{corr:vcqcn}
$\VC_\mathbb{Q} \pipeW \C_\mathbb{N} \equivW \C_\mathbb{Q}$.
\begin{proof}
By Propositions \ref{prop:ecpnc}, \ref{prop:ecpvcq} $\VC_\mathbb{Q}$ is not computable, and we can thus apply Corollary \ref{corr:pipevcx} to conclude $\VC_\mathbb{Q} \pipeW \C_\mathbb{N}$. That $\C_\mathbb{N} \equivW \C_\mathbb{Q}$ is Proposition \ref{prop:cqcn}.
\end{proof}
\end{corollary}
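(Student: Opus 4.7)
The plan is to deduce the corollary entirely from results already assembled, with the main task being to verify the hypotheses of Corollary \ref{corr:pipevcx} for $\mathbf{X} = \mathbb{Q}$. There are three independent claims to discharge: non-computability of $\VC_\mathbb{Q}$, the incomparability $\VC_\mathbb{Q} \pipeW \C_\mathbb{N}$, and the equivalence $\C_\mathbb{N} \equivW \C_\mathbb{Q}$.

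First, I would invoke Proposition \ref{prop:ecpvcq} to replace the target $\VC_\mathbb{Q}$ by the more tractable tree problem $\mathrm{ECP}$, and then quote Proposition \ref{prop:ecpnc}, which shows $\mathrm{ECP}$ (and hence $\VC_\mathbb{Q}$) is not computable. Since Weihrauch equivalence preserves computability, this gives the non-computability needed to trigger Corollary \ref{corr:pipevcx}.

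Second, I would apply Corollary \ref{corr:pipevcx} to $\mathbf{X} = \mathbb{Q}$. This requires two things: that $\mathbb{Q}$ is effectively countably-based, which is standard (the basic open rational intervals form an effective basis), and that every non-empty open subset of $\mathbb{Q}$ contains a copy of $\mathbb{Q}$. The latter is the only point where a small verification is needed: any non-empty basic open set in $\mathbb{Q}$, i.e.~an intersection $(a,b)\cap \mathbb{Q}$ with $a<b$ in $\mathbb{R}$, is again countable, metrizable, dense-in-itself, and has no isolated points, hence is homeomorphic to $\mathbb{Q}$ by Sierpi\'nski's characterization; moreover the homeomorphism can be chosen computably. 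With these two hypotheses verified, Corollary \ref{corr:pipevcx} yields $\VC_\mathbb{Q} \pipeW \C_\mathbb{N}$ at once.

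Third, the equivalence $\C_\mathbb{N} \equivW \C_\mathbb{Q}$ is simply Proposition \ref{prop:cqcn}, so nothing more needs to be done. There is no real obstacle in this argument; the only place where a little care is required is in confirming the self-similarity hypothesis of Corollary \ref{corr:pipevcx}, but this is a classical feature of $\mathbb{Q}$ and follows from Sierpi\'nski's theorem. Combining the three ingredients produces the full statement $\VC_\mathbb{Q} \pipeW \C_\mathbb{N} \equivW \C_\mathbb{Q}$.
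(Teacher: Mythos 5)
Your proposal is correct and follows the paper's own argument exactly: non-computability of $\VC_\mathbb{Q}$ via Propositions \ref{prop:ecpvcq} and \ref{prop:ecpnc}, then Corollary \ref{corr:pipevcx} for the incomparability with $\C_\mathbb{N}$, and Proposition \ref{prop:cqcn} for $\C_\mathbb{N} \equivW \C_\mathbb{Q}$. The only difference is that you spell out the verification of the self-similarity hypothesis of Corollary \ref{corr:pipevcx} (via Sierpi\'nski's characterization of $\mathbb{Q}$, with computable copies), which the paper leaves implicit; this is a welcome but inessential addition.
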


\begin{corollary}
\label{corr:cvqncr}
$\VC_\mathbb{Q} \nleqW \C_\mathbb{R}$.
\begin{proof}
Combine Corollary \ref{corr:vcqcn} and Corollary \ref{corr:crreduction}.
\end{proof}
\end{corollary}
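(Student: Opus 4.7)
My plan is to derive the non-reduction by contradiction, combining the two corollaries that have just been established. The statement $\VC_\mathbb{Q} \nleqW \C_\mathbb{R}$ requires no new technical work; it is essentially a bookkeeping argument that exploits the fact that $\VC_\mathbb{Q}$ has codomain $\mathbb{Q}$.

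First I would observe that $\VC_\mathbb{Q}$ fits the hypothesis of Corollary \ref{corr:crreduction}: it is a partial multivalued function of the form $\subseteq \mathcal{V}(\mathbb{Q}) \mto \mathbb{Q}$, so its codomain is indeed $\mathbb{Q}$. Hence if one assumed for contradiction that $\VC_\mathbb{Q} \leqW \C_\mathbb{R}$, then Corollary \ref{corr:crreduction} would immediately upgrade this to $\VC_\mathbb{Q} \leqW \C_\mathbb{N}$.

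Next I would invoke Corollary \ref{corr:vcqcn}, which states $\VC_\mathbb{Q} \pipeW \C_\mathbb{N}$ and in particular gives $\VC_\mathbb{Q} \nleqW \C_\mathbb{N}$. This directly contradicts the conclusion of the previous paragraph, completing the proof.

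There is no real obstacle to overcome here, since the heavy lifting is already packaged into Corollary \ref{corr:crreduction} (which in turn rests on $\C_\mathbb{R} \equivW \C_\Cantor \star \C_\mathbb{N}$, Proposition \ref{prop:closedchoicetransfer} applied with the computable Hausdorff space $\mathbb{Q}$, and $\C_\mathbb{Q} \equivW \C_\mathbb{N}$) and into Corollary \ref{corr:vcqcn} (which uses that $\VC_\mathbb{Q}$ is uncomputable everywhere via the self-similarity of $\mathbb{Q}$). The only thing worth double-checking when writing the argument up is that the codomain condition in Corollary \ref{corr:crreduction} is genuinely met by $\VC_\mathbb{Q}$, which it is by definition.
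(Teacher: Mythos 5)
Your proposal is correct and is exactly the paper's argument: the paper's proof is literally ``combine Corollary \ref{corr:vcqcn} and Corollary \ref{corr:crreduction}'', and you have simply spelled out the contrapositive/contradiction step, correctly noting that the codomain hypothesis of Corollary \ref{corr:crreduction} is satisfied since $\VC_\mathbb{Q}$ maps into $\mathbb{Q}$. Nothing further is needed.
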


Before moving on from $\VC_\mathbb{Q}$ to $\VC_{S_0}$ we shall examine the content of Brattka's proof from \cite{brattka14} that $\VC_\mathbb{Q}$ is non-computable. From his construction we extract the following definition:

\begin{definition}
Let $\mathrm{HitSparse} :\subseteq \mathcal{A}(\mathbb{N}) \times \Baire \to \mathcal{O}(\mathbb{N})$ be defined as follows:
\begin{itemize}
\item $(A,f) \in \dom(\mathrm{HitSparse})$ if $A$ is infinite, and
\item $U \in \mathrm{HitSparse}(A,f)$ if $U \cap A \neq \emptyset$ and $\forall n \in \mathbb{N} \ |[n,f(n)] \cap U| \leq 1$
\end{itemize}
\end{definition}

The intuition is that we are trying to solve the usual discrete choice $\C_\mathbb{N}$, but are allowed to make infinitely many guesses. These guesses, however, have to be sparse -- to make up for that, we assume that there are actually infinitely many correct solutions (on its own this requirement has no impact on the degree of $\C_\mathbb{N}$.

\begin{theorem}[Brattka \cite{brattka14}]
\begin{enumerate}
 \item $\mathrm{HitSparse}$ is not computable.
 \item $\mathrm{HitSparse} \leqW \VC_\mathbb{Q}$
 \end{enumerate}
 \end{theorem}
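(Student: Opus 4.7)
My plan is a direct diagonalisation against any hypothetical computable realiser $\Phi$ of $\mathrm{HitSparse}$. Fix $f(n) = n+1$, so that the sparsity constraint becomes ``no two consecutive naturals in $U$''. Start $\Phi$ on the empty initial segment of the $A^c$-enumeration together with this $f$; whenever $\Phi$ commits a new element $n$ to its enumeration of $U$, adversarially list $n$ into $A^c$ and continue. Two cases arise. Either $\Phi$ eventually produces no further output, in which case $U$ is finite; taking $A := \mathbb{N}\setminus U$ gives an infinite (in fact cofinite) $A$ with $U\cap A=\emptyset$. Or $\Phi$ enumerates infinitely many elements; then $U$ is infinite and sparse, hence coinfinite, and the same choice $A := \mathbb{N}\setminus U$ is again infinite with $U\cap A=\emptyset$. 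Either way $\Phi$ fails on a genuine input, so no such $\Phi$ exists. The only subtle point is that the continuity of $\Phi$ makes the interactive ``see-and-list'' strategy actually describe a legitimate $\mathcal{A}(\mathbb{N})\times\Baire$-name.

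\textbf{Part 2.} My plan is to realise the reduction via the equivalence $\VC_\mathbb{Q}\equivW \mathrm{ECP}$ of Proposition~\ref{prop:ecpvcq}, by constructing from $(A,f)$ a tree $T\subseteq\{0,1\}^*$ with eventually constant paths everywhere, together with a continuous post-processor sending every $p\in\mathrm{ECP}(T)$ to a valid $U\in\mathrm{HitSparse}(A,f)$. I fix an enumeration $(n_k)_{k\in\mathbb{N}}$ of $\mathbb{N}$ chosen so that any finite subset of the $n_k$'s is automatically $f$-sparse, and interpret binary strings of the form $0^{k_1}10^{k_2}1\cdots 0^{k_r}10^\omega$ as a ``revision history'': tentatively commit to $n_{k_1}$, then revise to $n_{k_1+k_2+1}$, and so on, finally settling on the last commitment. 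The tree is enumerated in real time: a ``commit'' node spawns a straight run of $0$'s as long as the most recent committed index has not been enumerated into $A^c$; once that index does appear in $A^c$, instead of stranding the vertex we graft a fresh ``recommit'' branch and recurse the same procedure with a later index. The post-processor reads the sequence of commitments off $p$ and lists them into $U$; by construction the committed $n_{k_i}$ are $f$-sparse, and the fact that $p$ ends in a run of $0$'s certifies that the last commitment is in $A$, so $U\cap A\neq\emptyset$.

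\textbf{Main obstacle.} I expect the principal challenge in Part 2 to be verifying that $T$ genuinely has an eventually constant extension at every vertex, even along adversarial enumerations of $A^c$ that force arbitrarily many recommits. This rests on $A$ being infinite (guaranteeing that ``safe'' indices remain available for every recommit) together with a careful choice of gap sizes $k_i$ so that $f$-sparsity of the induced $U$ is preserved throughout the recursive grafting. Once the ECP property of $T$ is confirmed, the computability of both $T$ and the post-processor is manifest, and the reduction follows.
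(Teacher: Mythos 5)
First, a remark on context: the paper does not prove this theorem at all -- it is imported from Brattka \cite{brattka14} -- so you are supplying a proof where the paper only cites one; that is legitimate, and your argument has to stand on its own. Your Part 1 is essentially a correct wait-and-see diagonalisation and is fine up to one small sub-case: in your second case you assert that the infinite output $U$ is sparse, but that is only guaranteed if the input you built is valid, which is exactly what is at stake -- if $\Phi$ violated sparsity and moreover enumerated a cofinite $U$, your $A=\mathbb{N}\setminus U$ would be finite and no contradiction would follow. The patch is easy: the moment the output violates $f$-sparsity, freeze the enumeration of the complement of $A$ at the finite stage reached, so that $A$ is cofinite (hence a valid input) while the sparsity violation persists by monotonicity of the output.

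Part 2 has a genuine gap, and it sits precisely at the point you flag as the ``main obstacle''; your proposed resolution (infinitude of $A$ plus careful gap sizes) does not address it. In your tree, each time the current commitment is enumerated into the complement of $A$ you graft a \emph{single} recommit continuation, and the value committed next is determined by the run lengths, i.e.\ by the timing of the input enumeration. But the reduction only ever receives \emph{negative} information about $A$: it never learns that any particular number belongs to $A$. Hence an adversary can choose $A$ and the timing of the complement-enumeration so that every value your deterministic chain of recommitments ever commits to lies outside $A$ (for instance, by delaying each kill it can steer all committed values into the even numbers and take $A$ to be the odds, which is infinite). Below the corresponding vertices the only infinite branch then has infinitely many $1$'s, so $T$ has no eventually constant path there and $T\notin\dom(\mathrm{ECP})$: the inner map of your reduction fails to land in the domain of the oracle, and the reduction is void. (``Safe indices remain available'' is true but useless -- the construction has no way to find one along a single branch.) A related defect: if immediate deaths can create runs of $1$'s, then an eventually-$1$ path is a legitimate $\mathrm{ECP}$ answer all of whose commitments are dead, so the decoded $U$ need not meet $A$. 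The repair is to make the tree \emph{branch} at recommitment points, offering below every vertex recommitment branches for (eventually) all values not yet enumerated into the complement of $A$, suitably spread out with respect to $f$ along each individual branch; since $A$ is infinite, below every vertex some branch's final commitment lies in $A$ and carries an infinite $0$-run, which restores the everywhere-eventually-constant condition, and your decoding of the commitments along the answer path then does yield a sparse $U$ with $U\cap A\neq\emptyset$. As written, however, the verification that $T\in\dom(\mathrm{ECP})$ -- the heart of the reduction -- fails.
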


 Since trivially $\mathrm{HitSparse} \leqW \C_\mathbb{N}$, from Corollary \ref{corr:vcqcn} it follows that:
 \begin{corollary}
 $\mathrm{HitSparse} \leW \VC_\mathbb{Q}$
 \end{corollary}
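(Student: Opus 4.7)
The plan is to combine the already stated Weihrauch reduction $\mathrm{HitSparse} \leqW \VC_\mathbb{Q}$ (from Brattka's theorem) with the incomparability $\VC_\mathbb{Q} \pipeW \C_\mathbb{N}$ from Corollary~\ref{corr:vcqcn} in order to rule out the reverse reduction. Concretely, I would argue by contradiction: suppose for contradiction that $\VC_\mathbb{Q} \leqW \mathrm{HitSparse}$.

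Next I would invoke the observation that $\mathrm{HitSparse} \leqW \C_\mathbb{N}$ (explicitly stated as trivial in the excerpt, since a solution to $\mathrm{HitSparse}(A,f)$ consists of \emph{any} open set whose sparsely distributed elements lie in $A$, a task easily reduced to finding a single natural number in $A$ through one application of $\C_\mathbb{N}$). Transitivity of Weihrauch reducibility then yields $\VC_\mathbb{Q} \leqW \C_\mathbb{N}$, which directly contradicts the incomparability $\VC_\mathbb{Q} \pipeW \C_\mathbb{N}$ established in Corollary~\ref{corr:vcqcn}.

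Therefore $\VC_\mathbb{Q} \nleqW \mathrm{HitSparse}$, which together with the already stated $\mathrm{HitSparse} \leqW \VC_\mathbb{Q}$ yields the strict inequality $\mathrm{HitSparse} \leW \VC_\mathbb{Q}$. There is really no obstacle here: the entire statement is a formal consequence of the two ingredients just above Brattka's theorem, and the only thing to verify explicitly (if desired) is the trivial reduction $\mathrm{HitSparse} \leqW \C_\mathbb{N}$, namely that given $(A,f)$ one calls $\C_\mathbb{N}$ on $A$ to obtain some $n \in A$ and outputs the principal open filter $\{U \in \mathcal{O}(\mathbb{N}) \mid n \in U\}$-style witness $U = \{n\}$, which vacuously satisfies the sparseness condition $|[k,f(k)] \cap U| \leq 1$ for every $k$.
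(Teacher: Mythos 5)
Your argument is exactly the paper's: the strictness follows because $\mathrm{HitSparse} \leqW \C_\mathbb{N}$ trivially (e.g.\ via the singleton witness $U=\{n\}$, which satisfies the sparseness condition vacuously), so a reduction $\VC_\mathbb{Q} \leqW \mathrm{HitSparse}$ would contradict the incomparability $\VC_\mathbb{Q} \pipeW \C_\mathbb{N}$ of Corollary~\ref{corr:vcqcn}, while the forward reduction is Brattka's theorem. This is correct and coincides with the paper's own proof.
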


To make $\VC_{S_0}$ more accessible, we again introduce a problem on trees. This time, we need a new represented space $\mathcal{S}\{0,1\}^*$ of finite sequences via the representation $\delta_\mathcal{S}$ defined inductively as $\delta_\mathcal{S}(0^\omega) = \varepsilon$, $\delta_\mathcal{S}(00p) = \delta_\mathcal{S}(11p) = \delta_\mathcal{S}(p)$, $\delta_\mathcal{S}(10p) = 0\delta_\mathcal{S}(p)$ and $\delta_\mathcal{S}(01p) = 1\delta_\mathcal{S}(p)$. Intuitively, if we are given $w \in \mathcal{S}\{0,1\}^*$ we never know for sure that we have seen the end of the finite sequence, for it can always be extended again.

\begin{definition}
Let $\mathrm{FSL} : \subseteq \mathcal{O}(\{0,1\}^*) \mto \mathcal{S}\{0,1\}^*$ be defined by $T \in \dom(\mathrm{FSL})$ if $T$ is a non-empty tree such that there exists a leaf below any vertex, and $w \in \mathrm{FSL}(T)$ if $w$ is a leaf of $T$.
\end{definition}

\begin{proposition}
\label{prop:fslnc}
$\mathrm{FSL}$ is non-computable.
\begin{proof}
We describe how to diagonalize against a hypothetical algorithm solving $\mathrm{FSL}$. The argument is essentially the same as in Proposition \ref{prop:ecpnc}.
We start off with the input $\{\varepsilon, 0\}$. The algorithm needs at some point to commit to output a leaf extending $0$. At this point, we add $1$ and $00$ to the tree. Since the new leaves are $1$ and $00$, and the algorithm can no longer output $1$, it needs to commit to $00$ eventually. At that point, we add $01$ and $000$ to the tree, and so on. Either the algorithm will at some point fail to commit to an extension of the current output, and thus output an internal vertex, or it will commit infinitely often, and thereby not output a vertex at all.
\end{proof}
\end{proposition}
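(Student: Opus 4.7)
The plan is to diagonalize against any putative algorithm realizing $\mathrm{FSL}$, adapting the adversarial strategy from Proposition~\ref{prop:ecpnc}. The representation $\delta_\mathcal{S}$ encodes a finite word bit by bit via occurrences of the pairs $10$ and $01$, while $00$ and $11$ are ``padding''. So the algorithm ``commits'' to successive output bits through these pairs, and denoting a finite word requires only finitely many such commitments; producing infinitely many automatically fails the realizer condition.

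I begin by enumerating $T_{-1} = \{\varepsilon, 0, 1\}$, whose leaves are $0$ and $1$. If the algorithm never outputs a $10$ or $01$ pair, then its output represents $\varepsilon$, which is not a leaf of $T_{-1}$, so I am done by keeping the input unchanged. Otherwise the algorithm eventually commits to a first bit $b_0$, at which point I enumerate $b_0 0$ and $b_0 1$, promoting $b_0$ from leaf to internal node while turning $b_0 0, b_0 1$ into new leaves. The algorithm is now bound to output a word with prefix $b_0$, so to output a leaf it must commit to a second bit $b_1$. Iterating: once the algorithm has committed to $w_i = b_0 b_1 \cdots b_i$, I add $w_i 0$ and $w_i 1$ to the tree and await the next commitment.

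There are two outcomes. Either at some stage the algorithm fails to commit further, in which case its output represents the word $w_i$, but by construction $w_i$ is an internal vertex of the current (and hence final) tree, not a leaf; or the algorithm commits infinitely often, in which case its output name contains infinitely many $10$/$01$ pairs and denotes no element of $\mathcal{S}\{0,1\}^*$ at all. Either way the algorithm is wrong. The main point to verify is that the final tree $T = \bigcup_i T_i$ is a valid input to $\mathrm{FSL}$: it is plainly non-empty and prefix-closed, and each trunk vertex $w_i$ has a leaf strictly below it (the sibling $w_i \overline{b_{i+1}}$), while every off-trunk vertex is itself a leaf. This invariant is the crux (as in Proposition~\ref{prop:ecpnc}): one must keep ``every vertex has a leaf below it'' alive throughout while simultaneously forcing either indefinite commitment or a non-leaf output.
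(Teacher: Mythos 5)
Your proof is correct and is essentially the paper's own diagonalization: force the realizer to commit to successive output bits (via the $10$/$01$ blocks of $\delta_\mathcal{S}$), and each time it commits, extend the tree so the committed word becomes internal, so that it either outputs an internal vertex or commits infinitely often and outputs no word at all. The only cosmetic differences are that you let the algorithm choose its bits and add both children of each committed word (the paper forces the trunk $0,00,000,\dots$ by adding one sibling and one extension) and that you spell out the invariant that every vertex keeps a leaf below it, which the paper leaves implicit.
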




\begin{proposition}
\label{prop:fslvcs}
$\mathrm{FSL} \leqW \VC_{S_0}$
\begin{proof}
We construct some $A \in \mathcal{V}(S_0)$ from the tree $T \in \dom(\mathrm{FSL})$ by iteratively updating a partial mapping $\phi : \subseteq \{0,1\}^* \to \mathbb{N}^*$ such that if $L$ is the set of leaves of our current approximation to $T$, then our current approximation to $A$ is consistent with $A = \phi[L]$. If we learn at some point that $w$ is not actually a leaf of $T$ (because it has some extension $wi \in T$), we will have given a finite amount of information about $A$ yet. In particular, there is some $N \in \mathbb{N}$ such that no mentioning of $\phi(w)N$ and $\phi(w)(N+1)$ has been given so far. This ensures that if we update our assumption that $\phi(w) \in A$ to either $\phi(w)N \in A$ or $\phi(w)(N+1) \in A$ this is consistent with all information given so far. We can thus set $\phi(w0) = \phi(w)N$ and $\phi(w1) = \phi(w)(N+1)$ without compromising our construction. The promise that there is a leaf below any vertex in $T$ ensures that any candidate put into $A$ will have a surviving candidate below it, which provides the well-definedness of $A$.

Let us assume that we are given some $u = \phi(w) \in A$ by $\VC_{S_0}$. If we knew $\phi(w) \in \mathbb{N}^*$, we could obviously reconstruct $w \in \{0,1\}^*$ and complete the reduction. However, we only know $\phi(w) \in S_0$, but only need $w \in \mathcal{S}\{0,1\}^*$. In particular, we can wait with extending our current candidate $w'$ for $w$ until we learn that $w$ indeed has an extension in $T$. But at that moment, we know the values of $\phi(w'0)$ and $\phi(w'1)$. Since at least one of them is not $\phi(w)$, we will eventually learn about $\phi(w)$ that it is either not below $\phi(w'0)$ or not below $\phi(w'1)$. But that answer then tells us how we should extend $w'$ to obtain a longer prefix of $w$.
\end{proof}
\end{proposition}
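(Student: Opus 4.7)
The plan is to encode the input tree $T$ as an overt subset $A \in \mathcal{V}(S_0)$ by building a labelling $\phi: \{0,1\}^* \rightharpoonup \mathbb{N}^*$ lazily, and then recover a leaf of $T$ from a point of $A$ in the $\mathcal{S}\{0,1\}^*$ representation. I would initialize $\phi(\varepsilon)=\varepsilon$ and keep a current ``active set'' $L$ of vertices consisting of the leaves of the current approximation to $T$; at each moment, the overt set $A$ being presented is intended to be (the closure of) $\phi[L_\infty]$, where $L_\infty$ is the set of leaves of the final $T$. Each time the enumeration of $T$ reveals that some vertex $w \in L$ acquires a child in $T$, I assign $\phi(w0) = \phi(w)N$ and $\phi(w1) = \phi(w)(N+1)$ for a sufficiently large fresh $N \in \mathbb{N}$, and update the $A$-commitment previously witnessed by $\phi(w)$ to be now witnessed by the appropriate child(ren).

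The consistency of this construction hinges on the specific shape of the basic opens of $S_0$: each such basic open is of the form $\{u \in \mathbb{N}^* \mid u \nsucceq v_1, \dots, u \nsucceq v_k\}$, i.e.~an avoid-prefix condition. If at some stage $\phi(w)$ lies in a previously-declared such basic open $V$, then each $v_j$ fails to be a prefix of $\phi(w)$; by choosing $N$ larger than all coordinates of the finitely many $v_j$'s seen so far (and larger than any labels previously used), the extensions $\phi(w)N$ and $\phi(w)(N+1)$ automatically also lie in $V$. Hence the emitted overt information remains consistent under any number of future splits. The promise that every vertex of $T$ has a leaf below it is what ensures that every commitment $\phi(w) \in A$ is eventually realized by the image of an actual leaf, so $A$ is nonempty and equals the closure of $\phi[L_\infty]$.

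For the inverse computation, given $u \in A$ presented in the $S_0$ representation (so we receive a growing enumeration of finite words not occurring as prefixes of $u$), we must emit some leaf $w$ of $T$ in the $\mathcal{S}\{0,1\}^*$ representation; by the inductive shape of $\delta_\mathcal{S}$ we may postpone committing to the length of the output indefinitely and simply emit longer prefixes. We maintain a current candidate prefix $w'$ of $w$, initially $\varepsilon$, and we pause until the enumeration of $T$ shows that $w'$ has a child. At that moment $\phi(w'0)$ and $\phi(w'1)$ have both been defined and are strict extensions of $\phi(w')$ by distinct fresh symbols. Since $u = \phi(w)$ for some leaf $w \succeq w'$, precisely one of these two words is a prefix of $u$, and the $S_0$-information about $u$ will eventually certify that $u$ does not lie above the other; this reveals which bit to append to $w'$, after which we repeat. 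If $w$ is actually a leaf equal to $w'$, the procedure simply stops emitting extensions, which is consistent with $w' \in \mathcal{S}\{0,1\}^*$.

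The main obstacle to make rigorous will be the bookkeeping showing that the produced $A$ is a \emph{well-defined} overt set whose every ``in $A$'' assertion is eventually realized by an actual leaf: phantom splits $\phi(w1)$ introduced when only $w0$ is ever enumerated into $T$ must either be absorbed into later refinements (once further children appear) or else not be committed to $A$ at all. The leaf-below-every-vertex hypothesis is precisely the combinatorial condition that makes this bookkeeping work out, and the required counting arguments (fresh-symbol choices, finiteness of information revealed at each stage) are then routine.
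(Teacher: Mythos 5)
Your proposal matches the paper's proof essentially step for step: the same lazily built labelling $\phi$ with fresh symbols guaranteeing that the avoid-prefix information already emitted about $A$ stays valid under splits, and the same recovery loop that waits for the current candidate $w'$ to acquire a child in $T$ and then uses the $S_0$-name's non-prefix certificates to decide which bit to append, exploiting that $\mathcal{S}\{0,1\}^*$ never forces a commitment to the end of the output. The ``phantom split'' bookkeeping you flag as the main obstacle is resolved exactly as you suggest (and as in the paper): only values $\phi(v)$ of vertices already enumerated into $T$ and currently leaves of the approximation are used to witness basic opens, and the leaf-below-every-vertex promise guarantees each such witness is eventually realized by the value of an actual leaf.
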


\begin{corollary}
$\mathrm{FSL} \leW \VC_{S_0}$
\begin{proof}
The reduction is Proposition \ref{prop:fslvcs}. It is easy to see that $\mathrm{FSL} \leqW \C_\mathbb{N}$, as we can just guess a potential leaf and the time it will be enumerated into the tree. By Proposition \ref{prop:cbelimination} then $\VC_{S_0} \leqW \mathrm{FSL} \leqW \C_\mathbb{N}$ would imply that $\VC_{S_0}$ is computable, and thus also that $\mathrm{FSL}$ is computable by Proposition \ref{prop:fslvcs}. But that contradicts Proposition \ref{prop:fslnc}.
\end{proof}
\end{corollary}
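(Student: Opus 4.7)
The reduction $\mathrm{FSL} \leqW \VC_{S_0}$ is already established in Proposition~\ref{prop:fslvcs}, so the plan is to witness strictness by separating the two degrees through $\C_\NN$: namely, to show $\mathrm{FSL} \leqW \C_\NN$ while $\VC_{S_0} \nleqW \C_\NN$. Any hypothetical reduction $\VC_{S_0} \leqW \mathrm{FSL}$ would then compose to the forbidden $\VC_{S_0} \leqW \C_\NN$.

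For the upper bound $\mathrm{FSL} \leqW \C_\NN$, the plan is to encode a candidate leaf as a pair $(w,n) \in \{0,1\}^* \times \NN$, certifying that $w \in T$ has appeared in the enumeration of $T$ by stage $n$ and that no proper extension of $w$ is ever added. The set of \emph{invalid} codes is semi-decidable (either $w$ fails to appear by stage $n$, or we later observe some $wb \in T$), and valid pairs exist because $T$ has a leaf below every vertex. A call to $\C_\NN$ over the complement returns a valid $(w,n)$, from which we read off the leaf $w$.

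For the lower bound $\VC_{S_0} \nleqW \C_\NN$, I would apply Corollary~\ref{corr:pipevcx}. Non-computability of $\VC_{S_0}$ follows from Propositions~\ref{prop:fslnc} and~\ref{prop:fslvcs}. The main ingredient to verify is the homogeneity hypothesis: every non-empty basic open subset of $S_0$ contains a computable copy of $S_0$. Given such a non-empty basic open $\bigcap_{i=1}^k \{u : u \nsucceq w_i\}$ (necessarily with every $w_i \neq \varepsilon$), pick some $n \in \NN$ strictly larger than the first letter of each $w_i$ and consider the map $\phi : S_0 \to S_0$ defined by $v \mapsto n \cdot v$. The image consists of words starting with $n$, which therefore extend none of the $w_i$, and a direct check on sub-basic opens shows that $\phi$ is a computable topological embedding.

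The main obstacle, though routine, is verifying this homogeneity of $S_0$; once it is in hand, the three reductions compose as advertised, completing the separation.
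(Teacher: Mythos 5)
Your proposal is correct, and its first two ingredients coincide with the paper's: the reduction $\mathrm{FSL} \leqW \VC_{S_0}$ is quoted from Proposition \ref{prop:fslvcs}, and $\mathrm{FSL} \leqW \C_\NN$ is established exactly as in the paper (guess a leaf together with the stage by which it is enumerated; your observation that the invalid codes are semi-decidable and valid codes exist is the needed detail). Where you diverge is the separation step. The paper argues that a hypothetical $\VC_{S_0} \leqW \mathrm{FSL} \leqW \C_\NN$ would render $\VC_{S_0}$ computable (invoking Proposition \ref{prop:cbelimination}), and then derives the contradiction with Proposition \ref{prop:fslnc} through Proposition \ref{prop:fslvcs}. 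You instead prove directly that $\VC_{S_0} \nleqW \C_\NN$ by applying Corollary \ref{corr:pipevcx} (hence ultimately Proposition \ref{prop:notbelowcn}), which requires the additional verification that every non-empty (basic) open subset of $S_0$ contains a copy of $S_0$; your shift embedding $v \mapsto nv$ with $n$ exceeding the first letters of the excluded words does establish this (checking basic opens suffices, since every non-empty open contains a non-empty basic one), and the image is even a computably closed copy. Your route is slightly longer but buys more: it makes explicit a self-similarity property of $S_0$ that the paper never records, and it yields the stronger by-product $\VC_{S_0} \pipeW \C_\NN$, exactly parallel to Corollary \ref{corr:vcqcn} for $\mathbb{Q}$, whereas the paper's argument only needs (and only gives) the strictness $\mathrm{FSL} \leW \VC_{S_0}$. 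Both arguments bottom out in the same non-computability facts, Propositions \ref{prop:fslnc} and \ref{prop:fslvcs}.
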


\subsection{Open questions}
\label{subsec:qb:questions}
We have presented our results on overt choice for countably-based non-quasi-Polish spaces not with the intention of concluding their investigation, but in the hope to spark further interest. We do not even dare to list a comprehensive list of open questions that we deem worthy of future work, but instead only list some prototypical questions.

If we consider upper bounds for overt choice amongst the usual Weihrauch degrees used for calibration in the literature, we see a huge gap between our negative result ruling out $\C_\mathbb{N}$ (Proposition \ref{prop:cbelimination}) and the positive answer providing $\C_\Baire$ as upper bound for a large class of spaces (Corollary \ref{corr:cbaireupper}). We thus ask whether this can be tightened. On the upper end of that gap, an initial question would be whether $\UC_\Baire$ might suffice\footnote{While a number of intermediate (between $\UC_\Baire$ and $\C_\Baire$) principles were studied in \cite{pauly-kihara4}, $\UC_\Baire$ still seems like a reasonable step down from $\C_\Baire$.}.
\begin{question}
Is there an effectively analytic effectively countably-based space $\mathbf{X}$ with $\VC_\mathbf{X} \nleqW \UC_\Baire$?
\end{question}

On the lower end of the gap, comparing $\VC_\mathbb{Q}$ and $\VC_{S_0}$ with degrees such $\lim$ and $\Sort$. Between $\C_\mathbb{R}$ not being an upper bound (Corollary \ref{corr:cvqncr}) and $\Pi^0_2\C_\mathbb{N}$ serving as such (Corollary \ref{corr:vcqpi02}), these would seem to be the next suitable candidates:

\begin{question}
Are $\VC_\mathbb{Q}$ and/or $\VC_{S_0}$ reducible to $\lim$, or even to $\Sort$?
\end{question}

\begin{question}
How are $\VC_\mathbb{Q}$ and $\VC_{S_0}$ related?
\end{question}

It seems very desirable to study overt choice for a broader range of countably-based non-quasi-Polish spaces than just two examples (however well the choice of these is motivated). Another simple and natural example would be the space $\mathbb{N}_{\mathrm{cof}}$ of integers with the cofinite topology (this is essentially the subspace $\{\{n\} \in \mathcal{A}(\mathbb{N}) \mid n \in \mathbb{N}\} \subseteq \mathcal{A}(\mathbb{N})$. This space is the typical example of a $T_1$ non-$T_2$-space. As such, we know from Theorem \ref{theo:vcimpliesqp} below that $\VC_{\mathbb{N}_{\mathrm{cof}}}$ is discontinuous -- but without a concrete proof giving us a meaningful lower bound in the Weihrauch lattice.

\begin{question}
What else can we say about $\VC_{\mathbb{N}_{\mathrm{cof}}}$?
\end{question} 

\section{Overt choice for CoPolish spaces}
\label{sec:copolish}
\subsection{Background on coPolish spaces}
In general, non-countably based spaces are often very difficult to understand (see e.g.~\cite{hoyrup8}). A nice class of not-necessarily countably-based topological spaces is formed by the class of CoPolish spaces.
They play a role in Type-2-Complexity Theory \cite{schroder6}
by allowing simple complexity. Concrete examples of CoPolish spaces relevant for analysis include the space of polynomials over the reals, the space of analytic functions and the space of compactly-supported continuous real functions.

\begin{definition}[\cite{schroder6}]
 A \emph{CoPolish space} $\XX$ is the direct limit
 of an increasing sequence of compact metrisable subspaces $\XX_k$.
\end{definition}

Any CoPolish space is a Hausdorff normal qcb-space.
We present a characterization of  Copolish spaces.

\begin{proposition}[\cite{schroder6}] \label{p:characterization:CoPolish}
 Let $\XX$ be a Hausdorff qcb-space. Then the following are equivalent:
\begin{enumerate}
 \item
  $\XX$ is a CoPolish space.
 \item
  The space $\mathcal{O}(\XX)$ of open subsets of $\XX$ equipped with the Scott-topology
  is a quasi-Polish space and $\XX$ is regular.
 \item
  $\XX$ has an admissible representation with a locally compact domain.
 \item
  $\XX$ has a countable pseudobase consisting of compact subsets.
\end{enumerate}
\end{proposition}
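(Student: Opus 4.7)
The plan is to prove the four conditions equivalent via the cycle $1 \Rightarrow 4 \Rightarrow 3 \Rightarrow 2 \Rightarrow 1$, since each step has a reasonably canonical witness while the direct equivalences feel less natural.

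For $1 \Rightarrow 4$, I would start from an exhaustion $\mathbf{X} = \bigcup_k \mathbf{X}_k$ by compact metrizable subspaces with $\mathbf{X}_k \subseteq \mathbf{X}_{k+1}$. Each $\mathbf{X}_k$ admits a countable base $\mathcal{B}_k$ of open sets with compact closures in $\mathbf{X}_k$, and since $\mathbf{X}_k$ is compact in $\mathbf{X}$, these closures are also compact in $\mathbf{X}$. The union $\mathcal{B} := \bigcup_k \{\overline{U}^{\mathbf{X}_k} : U \in \mathcal{B}_k\}$ is a countable family of compact sets in $\mathbf{X}$. To check it is a pseudobase, one uses that any convergent sequence $x_n \to x$ in a direct limit of a $T_1$-sequence of compacta must be eventually contained in some $\mathbf{X}_k$ — this is the standard fact about qcb-direct limits of compacta. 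Once the sequence and $x$ sit inside $\mathbf{X}_k$, one picks a basic open $U \in \mathcal{B}_k$ containing $x$ whose closure sits inside a given open neighborhood, and extracts a tail lying in $\overline{U}^{\mathbf{X}_k}$.

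For $4 \Rightarrow 3$, given a countable compact pseudobase $(K_n)_{n \in \NN}$, I would construct an admissible representation whose domain is a locally compact Polish space. The idea is to represent $x \in \mathbf{X}$ by an enumeration of those $n$ with $x \in K_n$, together with a "tightness" certificate ensuring we capture a full neighborhood filter — this yields a domain that is homeomorphic to a closed subspace of $\Baire \times \prod_n \{0,1\}^{\text{something}}$, but structured so as to be locally compact. Concretely, one can follow Schröder's construction by working inside a product of finite ordinal spaces indexed by whether $x$ lies in each $K_n$, then restricting by the compact Hausdorff trace — this gives a locally compact representation domain. Admissibility follows from the pseudobase property.

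For $3 \Rightarrow 2$, if $\delta : \subseteq Y \to \mathbf{X}$ is an admissible representation with $Y$ locally compact, then $\mathcal{O}(\mathbf{X})$ is computably isomorphic (after taking sequentializations) to a retract of $\mathcal{O}(Y)$. For locally compact Polish (or more generally, locally compact sober countably-based) $Y$, the lattice $\mathcal{O}(Y)$ under Scott topology is quasi-Polish (it is continuous, countably-based, and sober). Retracts of quasi-Polish spaces are quasi-Polish, yielding the first half. For regularity of $\mathbf{X}$, local compactness of the domain combined with admissibility gives the local-compact-pseudobase structure of (4), and one can separate a point from a closed set by shrinking inside a compact pseudobase element.

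The main obstacle is $2 \Rightarrow 1$. The strategy is to use that $\mathcal{O}(\mathbf{X})$ being quasi-Polish gives a countable base of $\mathbf{X}$ in an appropriate sense, while regularity lets us promote pseudobase elements to genuine compacta. Concretely, from $\mathcal{O}(\mathbf{X})$ quasi-Polish one extracts a countable family $(U_n)$ of opens that is a base for the Scott topology on $\mathcal{O}(\mathbf{X})$; saturating under the way-below relation in $\mathcal{O}(\mathbf{X})$ gives a family of pairs $(V,U)$ with $\overline{V} \subseteq U$ compact (using regularity to upgrade closures). The compacta $K_k$ obtained by taking finite unions of closures of such $V$ are metrizable (as compact Hausdorff subspaces of a qcb-space with countable pseudobase) and cover $\mathbf{X}$; one then verifies that $\mathbf{X}$ carries the direct limit topology by chasing convergent sequences through the Scott structure. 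This final verification — that the qcb-topology on $\mathbf{X}$ agrees with $\varinjlim \mathbf{X}_k$ and not merely that the $\mathbf{X}_k$ exhaust $\mathbf{X}$ — is where I expect the technical bulk of the argument to lie, and where referring to Schröder's original treatment in \cite{schroder6} is likely indispensable.
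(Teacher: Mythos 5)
First, a point of reference: the paper does not prove this proposition at all --- it is imported verbatim from \cite{schroder6} --- so there is no in-paper argument to compare against, and your proposal has to stand on its own as a proof. As it stands it is an outline rather than a proof, and the two hardest implications are precisely the ones you defer. For $4 \Rightarrow 3$ you only gesture at a construction (``an enumeration of the $K_n$ containing $x$ together with a tightness certificate \ldots following Schr\"oder's construction''); nothing in the sketch explains why the resulting representation is admissible or why its domain is locally compact, and this is exactly the non-trivial content of that implication. For $2 \Rightarrow 1$ you say explicitly that Schr\"oder's treatment is ``likely indispensable''; moreover the sketch you do give leans on the way-below relation in $\mathcal{O}(\XX)$ producing pairs $(V,U)$ with $\overline{V}$ compact, which presupposes that $\mathcal{O}(\XX)$ is a continuous lattice (equivalently that $\XX$ is core-compact / exponentiable in topological spaces). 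That is not part of hypothesis (2) and would itself need proof; quasi-Polishness of the Scott topology does not by itself give continuity of the lattice.

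There are also two soft spots in $3 \Rightarrow 2$. The claim that $\mathcal{O}(\XX)$ is a retract of $\mathcal{O}(\dom\delta)$ is unjustified: admissibility (indeed, $\delta$ being a topological quotient) gives the embedding $U \mapsto \delta^{-1}(U)$, but there is no obvious Scott-continuous retraction onto the $\delta$-saturated opens, and quasi-Polishness is not inherited by arbitrary subspaces, so you need either a retraction argument or a proof that the image is $\Pi^0_2$ --- neither is supplied. (The natural route to ``$\mathcal{O}(\XX)$ quasi-Polish'' is from (1): writing $\XX = \varinjlim \mathbf{K}_n$ gives $\mathcal{O}(\XX)$ as a countable limit of the $\omega$-continuous lattices $\mathcal{O}(\mathbf{K}_n)$, hence a $\Pi^0_2$ subspace of a countable product of quasi-Polish spaces.) Second, you obtain regularity in $3 \Rightarrow 2$ by invoking ``the local-compact-pseudobase structure of (4)'', but in your cycle $1 \Rightarrow 4 \Rightarrow 3 \Rightarrow 2 \Rightarrow 1$ condition (4) is not available from (3); you would need a separate proof of $3 \Rightarrow 4$ (or a direct regularity argument) to avoid circularity. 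The step $1 \Rightarrow 4$ is essentially fine, resting on the standard fact (also used elsewhere in the paper) that a convergent sequence together with its limit in such a direct limit lies in some $\XX_m$; but the remaining implications need genuine arguments, not pointers back to the reference being proved.
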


In the realm of countably-based Hausdorff spaces, Copolishness is just
 local compactness.

 \begin{lemma}\label{l:Copolish:vs:localcompact}
  A countably-based space is CoPolish if, and only if, if it is locally
 compact.
 \end{lemma}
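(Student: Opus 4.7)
The plan is to use the defining characterisation of a CoPolish space as a direct limit $\XX=\bigcup_k\XX_k$ of an increasing chain of compact metrisable subspaces, together with the classical fact
\[
(\star)\qquad\text{every compact }C\subseteq\XX\text{ sits inside some }\XX_k.
\]
I would prove $(\star)$ first: if $C$ met $\XX\setminus\XX_k$ for every $k$, pick $c_k\in C\setminus\XX_k$ and consider $S=\{c_k:k\in\NN\}$. For each $m$, $S\cap\XX_m\subseteq\{c_0,\dots,c_{m-1}\}$ is finite hence closed in the Hausdorff (so $T_1$) space $\XX_m$. The same holds for every subset of $S$. By the definition of the direct-limit topology, $S$ and every one of its subsets is closed in $\XX$, so $S$ is an infinite discrete closed subset of the compact set $C$, contradicting compactness.

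For the forward direction, assume $\XX$ is countably-based and CoPolish, fix $x\in\XX$ and a countable decreasing neighbourhood base $(V_k)_{k\in\NN}$ at $x$. If $x$ had a compact neighbourhood $K$, then by $(\star)$ we would have $K\subseteq\XX_m$ for some $m$, and $\XX_m$ would be a compact neighbourhood of $x$; so it suffices to assume (for contradiction) that no $\XX_k$ is a neighbourhood of $x$ and derive a contradiction. For each $k$ with $x\in\XX_k$, we may pick $y_k\in V_k\setminus\XX_k$; then $y_k\to x$ by the base property, so $C:=\{y_k\}\cup\{x\}$ is compact in $\XX$. By $(\star)$ we have $C\subseteq\XX_m$ for some $m$, contradicting $y_m\notin\XX_m$. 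Hence $\XX$ is locally compact.

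For the backward direction, suppose $\XX$ is countably-based, Hausdorff and locally compact (Hausdorffness is forced on the CoPolish side, so it is the natural reading of the lemma). A standard exhaustion argument for second-countable locally compact Hausdorff spaces yields an increasing sequence of compact sets with $K_k\subseteq\mathrm{int}(K_{k+1})$ and $\bigcup_kK_k=\XX$. Each $K_k$ is compact Hausdorff and second-countable, hence metrisable by Urysohn. It remains to check that the original topology on $\XX$ is the direct-limit topology of the chain $(K_k)_k$. The inclusion $\XX_k\hookrightarrow\XX$ being continuous shows that the original topology is coarser than the direct-limit one. Conversely, if $U\cap K_k$ is open in $K_k$ for every $k$ and $x\in U$, choose $m$ with $x\in\mathrm{int}(K_m)$; since $U\cap K_{m+1}$ is open in $K_{m+1}$, pick an open $V\subseteq\XX$ with $V\cap K_{m+1}=U\cap K_{m+1}$, and note that $V\cap\mathrm{int}(K_{m+1})$ is an open neighbourhood of $x$ in $\XX$ contained in $U$.

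The main obstacle is $(\star)$, whose verification uses crucially that the $\XX_k$'s are $T_1$ together with the precise formulation of the direct-limit topology. The remaining work is the delicate but routine bookkeeping in the backward direction, which requires choosing the exhaustion so that each $K_k$ lies in the interior of $K_{k+1}$.
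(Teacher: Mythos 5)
Your proof is correct, and it diverges from the paper's in one of the two directions. For the forward direction (CoPolish $\Rightarrow$ locally compact) you do essentially what the paper does: assume no $\XX_k$ is a neighbourhood of $x$, build a sequence $y_k \to x$ with $y_k \notin \XX_k$, and contradict the fact that a convergent sequence together with its limit lies in some $\XX_m$; the only difference is that the paper invokes this containment fact without proof, whereas you derive it from the more general statement $(\star)$ that every compact subset of the direct limit is contained in some $\XX_k$, which you verify via the closed-discrete-subset argument (a small point you leave implicit is that the set $S=\{c_k\}$ is infinite, but this is immediate since each point of $\XX$ lies in some $\XX_N$ and hence can occur as $c_k$ only for $k<N$). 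For the backward direction the routes genuinely differ: the paper gets away with two lines by appealing to Proposition~\ref{p:characterization:CoPolish}(4) --- in a second-countable locally compact Hausdorff space the relatively compact basic opens form a basis, and their closures give a countable compact pseudobase, hence CoPolishness --- while you work straight from the definition, building an exhaustion $K_k \subseteq \mathrm{int}(K_{k+1})$, metrizing each $K_k$ via Urysohn, and checking by hand that the original topology coincides with the direct-limit topology. Your version is more self-contained (it does not lean on the pseudobase characterization, whose equivalence with the direct-limit definition is itself nontrivial), at the cost of the exhaustion bookkeeping; the paper's version is shorter but only as strong as the cited characterization. Both arguments, like the paper's, use Hausdorffness on the locally compact side, which you correctly flag as the intended reading of the lemma.
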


 \begin{proof}
  Let $\XX$ be a locally compact Hausdorff space with countable basis
 $\mathcal{B}$.
  Then the countable subfamily $\mathcal{B}'$ of basic open sets whose
 closure is compact forms a basis as well.
  The family $\mathcal{K}$ of closures of sets in $\mathcal{B}'$ is then
 a countable pseudobase for $\XX$ consisting of compact sets.
  \\
  Conversely, let $\XX$ be the direct limit of a increasing sequence of
 compact metrisable subspaces $\XX_m$.
  Let $x$ be a point in $\XX$ with countable neighbourhood basis $\{B_i
 \,|\, i \in \NN\}$.
  Assume for contradiction $\bigcap_{i=0}^n B_i \nsubseteq \XX_n$ for
 all $n \in \NN$.
  Then for every $n$ there exists some $y_n \in \bigcap_{i=0}^n B_i
 \setminus \XX_n$.
  Clearly $(y_n)_n$ converges to $x$, so there is some $m$ such that
 $\{x,y_n \,|\, n \in \NN \} \subseteq \XX_m$,
  a contradiction.
  \\
  We conclude that $\XX_m$ is a compact neighbourhood of $x$.
  By Hausdorffness this implies that $\XX$ is locally compact.
 \end{proof}

 CoPolish spaces can be separated into three classes,
 the countably-based ones, the non-countably-based Fr{\'e}chet-Urysohn
 ones and the non-Fr{\'e}chet-Urysohn ones.

\subsection{Fr{\'e}chet-Urysohn spaces}

A topological space $X$ is called \emph{Fr{\'e}chet-Urysohn},
if the closure of any subset $M$ is equal to the set of all limits of sequences in $M$.
Any countably-based space and any metrisable space is a Fr{\'e}chet-Urysohn space.
We present an example of a Fr{\'e}chet-Urysohn CoPolish space $\Tmin$ that does not have a countable base.
In Lemma~\ref{l:FUCoPol:props} we will see that $\Tmin$ is a minimal such space.

\begin{example}
 The underlying set of $\Tmin$ is $\NN^2 \cup \{\infty\}$.
 A basis of the topology is given by the sets
 \[
  \{(a,b)\} \quad\text{and}\quad
  U_\ell:=\{\infty \} \cup \big\{(a,b)\,\big|\, b\geq \ell(a)\big\}
 \]
 for all $(a,b) \in \NN^2$ and $\ell \in \Baire$.
 Clearly, $\Tmin$ is the direct limit of the compact subspaces $\XX_m$
 that have $\{\infty\} \cup \{(a,b)\,|\, a \leq m \}$ as their respective  underlying sets.
 So $\Tmin$ is CoPolish.
 It is Fr{\'e}chet-Urysohn, because it is sequential and has only one point that does not form an open singleton.
 A computably admissible representation $\delta_{\Tmin}$ for $\Tmin$ has
 $\big\{m0^\omega, m0^b(a+1)0^\omega \,\big|\, a,b,m \in \NN,\, a \leq m \big\}$
 as its locally compact domain.
 It maps $m0^\omega$ to $\infty$ and $m0^b(a+1)0^\omega$ to $(a,b)$.
\end{example}

Overt choice on $\Tmin$ is not computable, because all-or-co-unique-choice on the natural numbers, denoted by $\mathrm{ACC}_\NN$ in
\cite{pauly-handbook},
is Weihrauch-reducible to $\VC_\Tmin$.

\begin{proposition}
\label{prop:accNtmin}
 $\mathrm{ACC}_\NN \leqsW \VC_\Tmin$.
\end{proposition}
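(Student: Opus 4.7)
The plan is to construct, from an $\mathrm{ACC}_\NN$-input $p \in \Baire$ (an enumeration of at most one forbidden $n \in \NN$), a name $H(p)$ of a non-empty overt set $A_p \in \mathcal{V}(\Tmin)$ such that any element of $A_p$ allows the outer map $K$ to recover a natural number different from $n$. The natural first attempt is $A_p = \{\infty\} \cup \{(n,b) : b \in \NN\}$ when $n$ is revealed in $p$, and $A_p = \{\infty\}$ otherwise; both are closed in $\Tmin$. The overt presentation declares the neighborhoods $U_\ell$ of $\infty$ as hitting (always, via $\infty \in A_p$), and, upon seeing $n$ enumerated, additionally declares $\{(n,b)\}$ as hitting for every $b$.

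The outer function $K$ processes the $\Tmin$-name $q$ returned by the realizer. A name of $(a,b)$ has the form $m0^b(a+1)0^\omega$; when $K$ detects the non-zero symbol $(a+1)$ after the initial run of zeros, it outputs $a+1$, and by construction $a=n$, so $a+1 \neq n$ is safe. The delicate case is when the realizer returns an $\infty$-name $m0^\omega$: here $K$ only observes the first symbol $m$, and for adversarially chosen $m$ a naive rule like $K(q) = m$ might coincide with the forbidden $n$.

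The main obstacle is precisely this $\infty$-case, and the resolution exploits the structure of $\delta_{\Tmin}$: the prefix $m0^T$ can only be extended to names of elements $(a,b)$ with $a \leq m$ (because the name of $(a,b)$ requires $a \leq m$). Therefore, by arranging $A_p$ to contain only points $(a,0)$ whose first coordinate $a$ exceeds every value so far enumerated in $p$, and by \emph{omitting} $\infty$ from $A_p$ in the revealed case (which is possible since $\{(a,0) : a \geq n+1\}$ is already closed in $\Tmin$ -- the neighborhood $U_\ell$ with $\ell \equiv 1$ witnesses that $\infty$ lies in its complement), any valid realizer output must be a $(a,0)$-name with $m \geq a > n$. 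Then $K$ simply outputs the first symbol $m$, which is guaranteed $\neq n$.

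The technical work, which I expect to be the hardest step, is producing a monotone overt presentation of this $A_p$ from $p$: at any finite stage we do not know whether a new $n$ will be enumerated, so we cannot directly commit to specific $(a,0) \in A_p$ for small $a$. This is handled by delaying declarations -- at stage $t$ we only declare $\{(a,0)\}$ as hitting once we have \emph{verified} $a \neq n$ (e.g., because some $a' \neq a$ has already been enumerated, pinning $n = a'$), together with a fallback device that keeps $A_p$ non-empty in the truly empty case (for instance by retaining $\infty$ there, which is harmless since no $n$ needs to be avoided). Correctness of the reduction then follows because, for any realizer $G \vdash \VC_\Tmin$, the only valid elements of $A_p$ in the $n$-revealed case are $(a,0)$'s with $a > n$, forcing $G$'s output to encode a level $m > n$, from which $K$ recovers a safe natural number.
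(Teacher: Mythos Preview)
Your preprocessor is not continuous, and this is a genuine gap rather than a detail to be filled in. You want $A_{0^\omega}$ to contain $\infty$ (your ``fallback device'') while $A_{0^j(n+1)0^\omega} = \{(a,0) : a > n\}$ omits $\infty$. But the very open set $U_\ell$ with $\ell\equiv 1$ that you invoke to show $\{(a,0):a>n\}$ is closed also shows that this set is uniformly bounded away from $\infty$: no matter how large $j$ is, $U_{\ell\equiv 1}$ meets $A_{0^\omega}=\{\infty\}$ but misses $\{(a,0):a>n\}$. Since $A\mapsto [A\cap U\neq\emptyset]$ is continuous on $\mathcal{V}(\Tmin)$, this blocks $A_{0^j(n+1)0^\omega}\to A_{0^\omega}$ as $j\to\infty$, so no continuous (let alone computable) realizer for your $H$ exists. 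Phrased in terms of your ``delaying declarations'': in the empty case you must eventually declare every neighbourhood $U_\ell$ of $\infty$ as hitting, but any such declaration made after reading only the prefix $0^j$ commits you to $U_\ell\cap A_{0^{j'}(n+1)0^\omega}\neq\emptyset$ for all $j'\geq j$ and all $n$, which your sets of the form $\{(a,0):a>n\}$ cannot satisfy. Including $\infty$ in the revealed-case $A_p$ resurrects exactly the $\infty$-name problem you were trying to avoid.

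The paper resolves this with a different, two-point construction: it maps $0^j(i+1)0^\omega$ to $\{(i+1,j),(0,i)\}$. The point is that one of these two elements is always close to $\infty$: if $i$ is large then $(0,i)$ lies in $U_\ell$ (since $i\geq\ell(0)$), while if $i$ is bounded then $(i+1,j)$ lies in $U_\ell$ once $j$ is large. This guarantees convergence to $\{\infty\}$ regardless of how $i$ varies with $j$. The postprocessor then exploits the name constraint $a\leq m$ that you noticed: on a name $m0^b(a+1)0^\omega$ it outputs $m$ unless $b=m$, in which case it outputs $m+1$. For the element $(i+1,j)$ one has $m\geq i+1>i$; for the element $(0,i)$ one has $b=i$, so either $m\neq i$ already or the correction $m+1$ kicks in. Your intuition about using the bound $a\leq m$ is on the right track, but it only handles one of the two points needed to make the preprocessor continuous.
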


\begin{proof}
 $\mathrm{ACC}_\NN$ is the problem of finding an element
 in a given set $A$ in the family $\{ \NN, \NN \setminus \{i\} \,|\, i \in \NN \} \subseteq \mathcal{A}(\mathbb{N})$.
 A computably admissible representation $\psi$ of this family is given by
 \[
  \psi(0^\omega):= \NN \quad\text{and}\quad
  \psi\big(0^j(i+1)0^\omega\big):= \NN \setminus \{i\} \,.
 \]
 We define the preprocessor $K$ to map $0^\omega$ to the set $\{\infty\}$
 and $0^j(i+1)0^\omega$ to the closed set $\big\{(i+1,j),(0,i)\big\}$.
 Let the postprocessor $H\colon \dom(\Tmin) \to \NN$ be defined by
 \[
  H(m0^\omega):=m \quad\text{and}\quad
  H\big(m0^b(a+1)0^\omega\big):= \left\{
  \begin{array}{cl}
    m+1 & \text{if $b=m$}  \\ 
    m   & \text{otherwise.}
  \end{array}\right.
 \]
 Let $p \in \dom(\psi)$ and $A:=\psi(p)$.
 If a realizer $G$ of overt choice applied to $K(p)$ returns $m0^\omega$, then the input set $A$ is $\NN$ so that $m$ is a legitimate result.
 If $G$ returns $m0^b(a+1)0^\omega$, then $A$ is
 either $\NN \setminus \{a-1\}$ or $\NN \setminus \{b\}$.
 As $a \leq m$, we have $H\big(m0^b(a+1)0^\omega \big) \in A$ as required. It is easy to see that $K$ and $H$ are both computable.
\end{proof}

We list a few properties of Fr{\'e}chet-Urysohn CoPolish spaces that will be instrumental to understand the complexity of their overt choice principles:

\begin{lemma}\label{l:FUCoPol:props}
 Let $\XX$ be a Fr{\'e}chet-Urysohn CoPolish space.
\begin{enumerate}
  \item
   The subspace $\XX_\omega$ of the points in $\XX$ that have a countable neighbourhood base
   is open.
  \item
   The complement $\XX_{\mathrm{nc}}:= \XX \setminus \XX_\omega$ forms a closed and discrete subspace of $\XX$.
  \item
   If $\XX_{\mathrm{nc}} \neq \emptyset$, then $\Tmin$ embeds into $\XX$ as a closed subspace.
\end{enumerate}
\end{lemma}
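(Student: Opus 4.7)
The common thread throughout is the following property of direct limits of compact Hausdorff spaces, already implicit in the proof of Lemma~\ref{l:Copolish:vs:localcompact}: every compact subset of a CoPolish space $\XX$, and in particular every convergent sequence together with its limit, lies in some $\XX_M$. Indeed, otherwise one could pick distinct $x_k \in K \setminus \XX_k$; the set $\{x_k\}$ meets each $\XX_m$ finitely often, so it is closed and discrete in $\XX$, contradicting compactness of $K$.

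For (1), I would essentially repeat the proof of Lemma~\ref{l:Copolish:vs:localcompact}: given $x \in \XX_\omega$ with countable neighborhood base $(B_i)_i$, the sequence trick there produces some $n$ with $V := \bigcap_{i \leq n} B_i \subseteq \XX_n$. The extra observation needed beyond the countably-based case is that the subspace topology on $V$ from $\XX$ coincides with the metric topology inherited from $\XX_n$: since $V$ is open in $\XX$ it is open in every $\XX_k$, so an open subset $U' \subseteq V$ in the metrizable $\XX_n$ transfers to being open in every $\XX_k$ (via the standard fact that each inclusion $\XX_n \hookrightarrow \XX_k$ is a subspace embedding and $V$ is open in $\XX_k$), hence open in $\XX$. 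Thus $V$ is metrizable, so every $y \in V$ has a countable neighborhood base.

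Part (2) is the main obstacle. Closedness of $\XX_{\mathrm{nc}}$ is immediate from (1); for discreteness, suppose some $z \in \XX_{\mathrm{nc}}$ were a limit of distinct points of $\XX_{\mathrm{nc}} \setminus \{z\}$. By Fr\'echet-Urysohn a sequence $z_n \to z$ of such points can be chosen, and by the compactness property $\{z, z_n\} \subseteq \XX_M$. Fix a strictly increasing $(M_n)$ with $M_0 > M$. Since $z_n \in \XX_{\mathrm{nc}}$, we have $z_n \in \overline{\XX \setminus \XX_{M_n}}$, so Fr\'echet-Urysohn supplies $y_n^{(k)} \to z_n$ with $y_n^{(k)} \notin \XX_{M_n}$. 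Every open neighborhood of $z$ eventually contains some $z_n$, hence a tail of $(y_n^{(k)})_k$, so $z \in \overline{\bigcup_n \{y_n^{(k)} : k \in \NN\}}$; Fr\'echet-Urysohn then yields $y_{n_j}^{(k_j)} \to z$, which by the compactness property sits in some $\XX_{M'}$ eventually. If some $n^*$ recurs infinitely often among the $n_j$, restrict to the constant-$n^*$ subsequence: unbounded $k_j$ gives $z = z_{n^*}$ (as $y_{n^*}^{(k)} \to z_{n^*}$), while bounded $k_j$ gives $z = y_{n^*}^{(k^*)}$ for some $k^*$, but $y_{n^*}^{(k^*)} \notin \XX_{M_{n^*}} \supseteq \XX_M \ni z$, both contradictions. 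Otherwise $n_j \to \infty$, so $M_{n_j} > M'$ eventually, and $y_{n_j}^{(k_j)} \notin \XX_{M_{n_j}} \supseteq \XX_{M'}$ contradicts $y_{n_j}^{(k_j)} \in \XX_{M'}$. The subtlety that makes this step hard is the two-level application of Fr\'echet-Urysohn, first producing sequences from each $z_n$ that escape $\XX_{M_n}$, then at $z$ collapsing these into one sequence whose level behaviour contradicts CoPolish convergence.

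For (3), pick any $z \in \XX_{\mathrm{nc}}$ and inductively build sequences $(x_a^{(b)})_b \to z$ together with indices $m_0 < K_0 < m_1 < K_1 < \cdots$ so that $x_a^{(b)} \in \XX_{K_a} \setminus \XX_{m_a}$: use $z \in \overline{\XX \setminus \XX_{m_a}}$ and Fr\'echet-Urysohn to get the sequence, then the compactness property to put it inside some $\XX_{K_a}$, then set $m_{a+1} > K_a$; pass to subsequences to make the $x_a^{(b)}$ distinct and all different from $z$. Define $S = \{z\} \cup \{x_a^{(b)} : a,b \in \NN\}$ with the intended bijection $\infty \mapsto z$, $(a,b) \mapsto x_a^{(b)}$ to $\Tmin$. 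A single analysis handles both closedness of $S$ and the homeomorphism: any sequence $(u_j)$ in $S$ converging in $\XX$ lies in some $\XX_{L'}$, which bounds the $a$-indices (since $x_a^{(b)} \notin \XX_{m_a}$ with $m_a \to \infty$), and a constant-$a^*$ subsequence then converges to $z$ (if the $b$-indices tend to infinity) or to some $x_{a^*}^{(b^*)}$. In particular $E_\ell := \{x_a^{(b)} : b < \ell(a)\}$ does not accumulate at $z$ for any $\ell \in \Baire$, so $\XX \setminus \overline{E_\ell}$ witnesses that $\{z\} \cup \{x_a^{(b)} : b \geq \ell(a)\}$ is open in $S$; combined with isolation of each $x_a^{(b)}$ (by the same analysis) and the fact that every open $V \ni z$ contains tails of each axis, this gives exactly the $\Tmin$ neighborhood base at $\infty$.
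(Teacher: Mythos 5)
Your proposal is correct and takes essentially the same route as the paper: the key fact that convergent sequences (indeed compact sets) lie in some $\XX_m$, the identification of $\XX_\omega$ with $\bigcup_m \mathrm{int}(\XX_m)$, the two-level Fr\'echet-Urysohn double-sequence contradiction for discreteness, and the band construction $x_a^{(b)} \in \XX_{K_a} \setminus \XX_{m_a}$ yielding the closed copy of $\Tmin$ all match the paper's proof, with your explicit check of the neighbourhood base at $\infty$ (via the closed sets $E_\ell$) playing the role of the paper's ``reflects convergent sequences'' step. The only detail to make explicit is that $m_0$ should be chosen large enough that $z \in \XX_{m_0}$ (the paper's $m_{-1}$), so that the Fr\'echet-Urysohn sequences automatically avoid $z$ and injective subsequences can be extracted by Hausdorffness.
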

\begin{proof}
  Let $(\XX_m)_m$ an increasing sequence of compact metrisable subspaces
 such that $\XX$ is the direct limit of $(\XX_m)_m$.
 \begin{enumerate}
  \item \label{en:XXomega:open}
   In the proof of Lemma~\ref{l:Copolish:vs:localcompact}  we have seen
 that for any point $x$ with a countable neighbourhood base there is some
 $m$ such that $x$ is in the interior $\mathrm{int}(\XX_m)$ of some
 $\XX_m$.
   Since $\XX_m$ has a countable base, any point in the interior of
 $\XX_m$ has a countable neighbourhood base in $\XX$,
   namely the one in the subspace $\mathrm{int}(\XX_m)$.
   Hence $\XX_\omega= \bigcup_m \mathrm{int}(\XX_m)$ is open.
  \item \label{en:XXnc:discrete}
   For any point $x \in \XX_{\mathrm{nc}}$ and any $m \in \NN$
   there is a sequence $(y_i)_i$ outside $\XX_m$ which converges to $x$,
   as otherwise $x$ were in the interior of $\XX_m$ due to the
 Fr{\'e}chet-Urysohn property,
   which would imply $x \in \XX_\omega$ by the discussion in item
 \eqref{en:XXomega:open}.
   \\
   Assume that there exists an injective sequence $(x_n)_n$ in
 $\XX_{\mathrm{nc}}$
   that converges to some point $x_\infty \in \XX_{\mathrm{nc}}$.
   W.l.o.g.\ $x_\infty \notin \{ x_n \,|\, n \in \NN\}$.
   Set $m_{-1}:=\min\big\{ i \in \NN \,\big|\, \{x_\infty,x_n \,|\, n
 \in \NN \} \subseteq \XX_i \big\}$.
   By the above observation we can construct an increasing sequence
 $(m_a)_a$ of natural numbers strictly above $m_{-1}$
   and a double sequence $(y_{a,b})_{a,b}$ such that $(y_{a,b})_b$
 converges to $x_a$
   and $y_{a,b} \in \XX_{m_a} \setminus \XX_{m_{a-1}}$ for every $a,b$.
   Obviously, $x_\infty$ is in the closure of $\{ y_{a,b} \,|\, a,b \in
 \NN\}$.
   So there are functions $s,t\colon \NN \to \NN$ such that
 $(y_{s(i),t(i)})_i$ converges to $x_\infty$ by the Fr{\'e}chet-Urysohn
 property.
   Then $s$ is bounded, because any convergent sequence in $\XX$ is
 contained in some subspace $\XX_m$.
   But then there is a subsequence of $(y_{s(i),t(i)})_i$ converging
 either to some $x_n \neq x_\infty$ or to some $y_{a,b} \neq x_\infty$.
 This contradicts the Hausdorff property.
   \\
   We conclude that in $\XX_{\mathrm{nc}}$ all converging sequences are
 eventually constant.
   Since $\XX_{\mathrm{nc}}$ is sequential by being a closed subspace of
 a qcb-space and Hausdorff,
   $\XX_{\mathrm{nc}}$ is discrete.
   Discrete qcb-spaces are at most countable because of the existence of
 a countable pseudobase.
  \item
   Choose some point $x \in \XX_{\mathrm{nc}}$.
   Set $m_{-1}:=\min\{ i \,|\, x \in \XX_i\}$.
   In a similar way as in the proof of \eqref{en:XXnc:discrete},
   we construct an increasing sequence $(m_a)_a$ of natural numbers
 strictly above $m_{-1}$
   and a double sequence $(y_{a,b})_{a,b}$ such that $(y_{a,b})_b$
 converges (now) to $x$
   and $y_{a,b} \in \XX_{m_a} \setminus \XX_{m_{a-1}}$ for every $a,b$.
   Since $\XX$ is Hausdorff and $y_{a,b} \neq x$,
   for all $a$ the sequence $(y_{a,b})_b$ contains an injective
 subsequence $(z_{a,j})_j$.
   We define $e\colon \Tmin \to \XX$ by $e(\infty):=x$ and
 $e(a,b):=z_{a,b}$.
   By construction $e$ is injective and continuous.
   Moreover if $(t_n)_n$ is an injective sequence in $\Tmin$ such that
 $(e(t_n))_n$ converges to some point $y$ in $\XX$,
   then $y=x$ by the Hausdorffness of $\XX$ and again by the fact that
 any convergent sequence is contained in some $\XX_m$.
   So the image of $e$ is closed and $e$ reflects converging sequences
   (meaning that $(t_n)_n$ converges to $t_\infty$, whenever
 $(e(t_n))_n$ converges to $e(t_\infty)$).
   Therefore $\Tmin$ embeds topologically into $\XX$ as a closed
 subspace.
 \end{enumerate}
 \end{proof}

\begin{theorem}
  Let $\XX$ be a Fr{\'e}chet-Urysohn CoPolish space.
  Then overt choice on $\XX$ is continuous if, and only if, $\XX$ is countably-based.
\end{theorem}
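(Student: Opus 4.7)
The plan is to prove the two directions separately. For the forward direction, suppose $\XX$ is countably-based. By Lemma~\ref{l:Copolish:vs:localcompact} $\XX$ is locally compact Hausdorff, and being second-countable it is regular, hence metrizable by Urysohn, and in fact Polish (as an open subspace of its metrizable one-point compactification). By the topological relativization of Theorem~\ref{theo:overtchoicequasipolish} (or equivalently by Corollary~\ref{corr:overtchoiceqpcharac}), $\VC_\XX$ is continuous.

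For the reverse direction I will show the contrapositive: if $\XX$ is not countably-based, then $\VC_\XX$ is not continuous. The first step is to argue that $\XX_{\mathrm{nc}} \neq \emptyset$. Suppose otherwise, so that every point of $\XX$ has a countable neighborhood base. From the proof of Lemma~\ref{l:Copolish:vs:localcompact}, each such point lies in the interior of some $\XX_m$, so $\XX = \bigcup_m \mathrm{int}(\XX_m)$. Each $\mathrm{int}(\XX_m)$ is an open subspace of the compact metrisable space $\XX_m$ and hence second-countable; since $\mathrm{int}(\XX_m)$ is open in $\XX$, its subspace basis is a family of open subsets of $\XX$. The countable union of these countable bases yields a countable basis of $\XX$, contradicting the assumption.

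Once $\XX_{\mathrm{nc}} \neq \emptyset$, Lemma~\ref{l:FUCoPol:props}(3) gives a closed embedding $\Tmin \hookrightarrow \XX$. By the topological version of the earlier proposition on closed subspaces (the inclusion $\mathcal{V}(\Tmin) \to \mathcal{V}(\XX)$ is continuous since $\Tmin$ is closed in $\XX$), we obtain $\VC_\Tmin \leqWcont \VC_\XX$. Composing with Proposition~\ref{prop:accNtmin} yields $\mathrm{ACC}_\NN \leqWcont \VC_\XX$. Since $\mathrm{ACC}_\NN$ is discontinuous by a standard diagonal argument (any continuous realizer must commit to some value $n$ after reading a finite prefix of $0^\omega$, but this prefix is also consistent with the encoding $0^j(n{+}1)0^\omega$ of $\NN \setminus \{n\}$, on which $n$ is not a valid answer), we conclude that $\VC_\XX$ cannot be continuous.

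The main obstacle is the first step of the reverse direction, namely verifying that the absence of a countable base forces the existence of a point without a countable neighborhood base in this CoPolish setting; this is where the precise information extracted from the proof of Lemma~\ref{l:Copolish:vs:localcompact} (that points with countable neighborhood bases lie in the interiors of the $\XX_m$) is essential, together with the observation that a countable open cover by countably-based open subspaces suffices to give a countable basis of the whole space. Everything else is a packaging of previously established reductions.
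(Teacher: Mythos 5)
Your proof is correct and follows essentially the same route as the paper: countably-based CoPolish $\Rightarrow$ locally compact $\Rightarrow$ Polish $\Rightarrow$ $\VC_\XX$ continuous, and otherwise $\XX_{\mathrm{nc}}\neq\emptyset$, a closed copy of $\Tmin$ via Lemma~\ref{l:FUCoPol:props}, and $\mathrm{ACC}_\NN \leqWcont \VC_\Tmin$ yield discontinuity. The only deviation is that where the paper cites Schr\"oder's result that first-countable qcb-spaces are countably based, you justify this step directly via $\XX=\bigcup_m \mathrm{int}(\XX_m)$ and the second-countability of each $\mathrm{int}(\XX_m)$, which is a harmless, self-contained substitution.
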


\begin{proof}
 If $\XX$ has a countable base, then $\XX$ is locally compact and therefore a Polish space.
 Hence overt choice on $\XX$ is continuous (see \cite{presser}). 
 \\
 If $\XX$ is not countably-based, then $\XX$ is not first-countable by \cite[Proposition 3.3.1]{schroder5}, thus $\XX_{\mathrm{nc}} \neq \emptyset$.
 Therefore $\Tmin$ embeds topologically into $\XX$ as a closed subspace by Lemma~\ref{l:FUCoPol:props}.
 Since $\VC_\Tmin$ is discontinuous by Proposition \ref{prop:accNtmin}, $\VC_\XX$ is discontinuous as well.
\end{proof}

Overt choice on Fr{\'e}chet-Urysohn CoPolish spaces turns out to have $\lpo$ as an upper bound in the topological Weihrauch lattice.
Remember that $\lpo\colon \Baire \to \{0,1\}$ is defined by
$\lpo(r)=1 :\Longleftrightarrow \exists k \in \NN. r(k)=0$.

\begin{theorem}\label{th:FUCoPol:lpo}
 Let $\XX$ be a Fr{\'e}chet-Urysohn CoPolish space.
 Then $\VC_\XX \leqWcont \lpo$.
\end{theorem}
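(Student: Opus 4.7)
The plan is to use Lemma~\ref{l:FUCoPol:props} to decompose $\XX = \XX_\omega \cup \XX_{\mathrm{nc}}$, where $\XX_\omega$ is an open, countably-based, Polish subspace and $\XX_{\mathrm{nc}}$ is a closed discrete (hence at most countable) subspace. A single query to $\lpo$ will decide whether the input $A \in \mathcal{V}(\XX)$ meets $\XX_\omega$, and in each branch the task can be solved continuously.

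First I would verify that $\XX_\omega = \bigcup_m \mathrm{int}_\XX(\XX_m)$ is second-countable (a countable union of open subsets of the compact metrisable $\XX_m$), Hausdorff, and locally compact (each $\mathrm{int}_\XX(\XX_m)$ is locally compact as an open subspace of the compact metrisable $\XX_m$, and this is a local property); therefore $\XX_\omega$ is a Polish space, and by the relativization of Theorem~\ref{theo:overtchoicequasipolish}, $\VC_{\XX_\omega}$ is continuous. Moreover, from $A \in \mathcal{V}(\XX)$ the restriction $A \cap \XX_\omega \in \mathcal{V}(\XX_\omega)$ can be computed continuously, since every open subset of the open subspace $\XX_\omega$ is open in $\XX$ and its intersection with $A$ is semi-decidable from the overt presentation. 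In particular the condition ``$A \cap \XX_\omega \neq \emptyset$'' is $\Sigma^0_1$, so it can be encoded as an input to $\lpo$. In parallel I would fix once and for all an enumeration $\XX_{\mathrm{nc}} = \{x_0, x_1, \ldots\}$ together with open sets $W_n \subseteq \XX$ satisfying $W_n \cap \XX_{\mathrm{nc}} = \{x_n\}$, which exist by discreteness of $\XX_{\mathrm{nc}}$ in $\XX$; such data is permissible in the topological setting of $\leqWcont$.

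The preprocessor $K$ then produces the $\lpo$-input witnessing ``$A \cap \XX_\omega \neq \emptyset$''. If $\lpo$ returns $1$, the postprocessor runs the continuous realizer for $\VC_{\XX_\omega}$ on the non-empty overt set $A \cap \XX_\omega$ and returns the resulting point, which lies in $A$. If $\lpo$ returns $0$, then $A \subseteq \XX_{\mathrm{nc}}$ and, since $A$ is non-empty, at least one $W_n$ must eventually be observed to meet $A$; as $W_n \cap \XX_{\mathrm{nc}} = \{x_n\}$ and $A \subseteq \XX_{\mathrm{nc}}$, such an observation witnesses $x_n \in A$, which can then be output.

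The main obstacle will be checking that the map $A \mapsto A \cap \XX_\omega$ genuinely lands in $\mathcal{V}(\XX_\omega)$ in a way that is continuous even when $A \cap \XX_\omega = \emptyset$ (so the $\lpo$-branch is meaningful), and that the Polishness of $\XX_\omega$ really follows from the internal description provided in the proof of Lemma~\ref{l:FUCoPol:props}; the Fréchet-Urysohn hypothesis itself enters only through that lemma in order to guarantee the decomposition $\XX_\omega \cup \XX_{\mathrm{nc}}$ with the required properties.
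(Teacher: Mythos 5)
Your proposal is correct and follows essentially the same route as the paper's proof: one $\lpo$ query to test whether $A$ meets the open part $\XX_\omega$, continuous Polish overt choice on $A\cap\XX_\omega$ (after converting the $\mathcal{V}(\XX)$-name via the open-subspace inclusion of $\mathcal{O}(\XX_\omega)$ into $\mathcal{O}(\XX)$), and otherwise a dovetailed search through the countable closed discrete set $\XX_{\mathrm{nc}}$ using open sets isolating its points. The only (harmless) deviations are that you establish Polishness of $\XX_\omega$ directly from $\XX_\omega=\bigcup_m \mathrm{int}(\XX_m)$ instead of invoking that open subspaces of CoPolish spaces are CoPolish and first-countable qcb-spaces are countably based, and you use generic isolating opens $W_n$ where the paper takes $W_i=\{z_i\}\cup\XX_\omega$.
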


\begin{proof}
 Given a positive name $p$ of a non-empty closed set $A$, we first use $\lpo$ to decide whether or not $A$ intersects the open set $\XX_\omega$.
\begin{enumerate}
 \item
  If it does, we proceed as follows.
  By being an open subspace of a CoPolish space, $\XX_\omega$ is CoPolish as well,
  because those elements of a countable compact pseudobase that are contained in $\XX_\omega$ form a countable compact pseudobase for $\XX_\omega$.
  Since $\XX_\omega$ is first-countable, it has a countable base by \cite[Proposition 3.3.1]{schroder5}.
  Therefore $\XX_\omega$ is locally compact and Polish.
  Since $\mathcal{O}(\XX_\omega)$ is a retract of $\mathcal{O}(\XX)$,
  we can continuously convert the given name of $A$ into a positive name of the closed subset $A \cap \XX_\omega$ in the space $\XX_\omega$.
  Now we can apply the continuous version of the algorithm from \cite{presser} to obtain an element of $A \cap \XX_\omega$.
 \item
  Now we consider the case $A \subseteq \XX_{\mathrm{nc}}$.
  Since $\XX_{\mathrm{nc}}$ is a discrete qcb-space, it is countable.
  So there are elements $z_i$ with $\{ z_i \,|\, i \in \NN\}=\XX_{\mathrm{nc}}$.
  By Lemma~\ref{l:FUCoPol:props}
  the sets $W_i:=\{z_i\} \cup \XX_\omega$ are open.
  By dovetailing we systematically search for a set $W_i$ that intersects $A$.
  Once we have found one, we output the corresponding element $z_i$.
\end{enumerate}
\end{proof}

\begin{remark}
 If we require that the subspace $\XX_\omega$ of $\XX$ is computable equivalent to a computable Polish space, the set $\XX_\omega$ is computably open in $\XX$ and the elements of $\XX_{\mathrm{nc}}$ form a computable sequence,
 then we have $\VC_\XX \leqW \lpo$.
\end{remark}

We proceed to show that the reduction in Theorem \ref{th:FUCoPol:lpo} is strict by revealing the weakness of $\VC_\mathbf{X}$ for Fr\'echet-Urysohn spaces $\mathbf{X}$. Again we use a technical lemma:

\begin{lemma}\label{l:charact:convrel:FUspace}
  Let $\XX$ be a admissibly represented Fr{\'e}chet-Urysohn space,
  and let $(A_n)_{n \leq \infty}$ be a sequence of non-empty closed
 subsets.
  Then $(A_n)_n$ converges to $A_\infty$ in $\mathcal{V}(\XX)$ if, and
 only if,
  for any $x_\infty \in A_\infty$ and any strictly increasing function
 $\varphi\colon\NN \to \NN$
  there is a sequence $(x_n)_n$ converging to $x_\infty$ and a strictly
 increasing function $\xi\colon\NN \to \NN$
  with $x_n \in A_{\varphi\xi(n)}$ for all $n \in \NN$.
 \end{lemma}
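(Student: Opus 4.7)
The plan is to prove both implications directly. For the easier direction $(\Leftarrow)$, I would argue by contradiction: if $(A_n)_n$ does not converge to $A_\infty$ in $\mathcal{V}(\XX)$, then since the topology of $\mathcal{V}(\XX)$ is generated by subbasic opens of the form $\{F \in \mathcal{V}(\XX) \mid F \cap U \neq \emptyset\}$ for $U \in \mathcal{O}(\XX)$, there must exist an open $U \subseteq \XX$ meeting $A_\infty$ and a strictly increasing $\varphi\colon \NN \to \NN$ with $U \cap A_{\varphi(k)} = \emptyset$ for every $k$. Picking any $x_\infty \in U \cap A_\infty$ and invoking the hypothesis yields a sequence $(x_n) \to x_\infty$ and a strictly increasing $\xi$ with $x_n \in A_{\varphi\xi(n)}$; openness of $U$ forces $x_n \in U$ eventually, contradicting $U \cap A_{\varphi\xi(n)} = \emptyset$.

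For the harder direction $(\Rightarrow)$, fix $x_\infty \in A_\infty$ and a strictly increasing $\varphi$. The key preliminary observation is that every subsequence of $(A_n)_n$ also converges to $A_\infty$, so $(A_{\varphi(k)})_k \to A_\infty$, and hence every open neighbourhood $U$ of $x_\infty$ eventually meets $A_{\varphi(k)}$. Consequently, for every $K \in \NN$, the point $x_\infty$ lies in the closure of $M_K := \bigcup_{k \geq K} A_{\varphi(k)}$. I then split on whether $J := \{k \in \NN \mid x_\infty \in A_{\varphi(k)}\}$ is finite: if $J$ is infinite, take the constant sequence $x_n = x_\infty$ and let $\xi$ enumerate $J$ in increasing order.

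Otherwise choose $K > \max J$, so that $x_\infty \notin A_{\varphi(k)}$ for any $k > K$. Applying the Fr{\'e}chet--Urysohn property of $\XX$ to $x_\infty \in \overline{M_K}$ produces a sequence $(x_n) \subseteq M_K$ with $x_n \to x_\infty$; write $x_n \in A_{\varphi(k_n)}$ with $k_n > K$. The sequence $(k_n)$ must be unbounded, since otherwise infinitely many $x_n$ would belong to a single closed set $A_{\varphi(k)}$, forcing $x_\infty \in A_{\varphi(k)}$ by closedness and contradicting $k > K \geq \max J$. Extracting a subsequence $(x_{n_j})$ along which $(k_{n_j})$ is strictly increasing and setting $\xi(j) := k_{n_j}$ yields the required data.

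The main obstacle is the strict monotonicity of $\xi$: a naive invocation of the Fr{\'e}chet--Urysohn property only yields a sequence in $\bigcup_k A_{\varphi(k)}$ converging to $x_\infty$, but without further analysis the indices might be bounded or repeat. The case split on $J$ and the use of $M_K$ rather than the whole union are precisely what force unbounded indices in the non-degenerate case, while the Fr{\'e}chet--Urysohn property is what converts the topological closure information $x_\infty \in \overline{M_K}$ into an actual convergent sequence in the first place.
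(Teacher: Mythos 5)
Your proof is correct and follows essentially the same route as the paper's: reduce to the subsequence $(A_{\varphi(k)})_k$, split on whether $x_\infty$ lies in infinitely many of the sets, use the Fr\'echet--Urysohn property on the closure of a tail union, and then force strictly increasing indices via closedness of the individual sets. The only (cosmetic) difference is that the paper builds the increasing $\xi$ by an explicit induction using closedness of the finite unions $\bigcup_{i=n_0}^n A_i$, whereas you argue that the witness indices are unbounded and extract a monotone subsequence.
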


 \begin{proof}
  The backward direction is obvious.
  For the forward direction, let $(A_n)_n$ be a sequence of non-empty
 closed sets converging in $\mathcal{V}(\XX)$ to $A_\infty$ and let
 $x_\infty \in A_\infty$.
  It suffices to consider $\varphi= \mathrm{id}_\NN$.
  If $x_\infty$ is contained in $A_n$ for infinitely many $n$, then we
 simply choose $x_n=x_\infty$
  and $\xi$ as the strictly increasing function with
 $\mathrm{range}(\xi)=\{n \in \NN \,|\, x_\infty \in A_n\}$.\
  \\
  Otherwise there is some $n_0$ with $x_\infty \notin \bigcup_{i \geq
 n_0} A_i$.
  Since $(A_n)_{n \geq n_0}$ converges to $A_\infty$ with respect to the
 lower fell topology,
  $x_\infty$ is in the closure of $\bigcup_{i \geq n_0} A_i$.
  By the Fr{\'e}chet-Urysohn property, there is a sequence in $(y_m)_m
 \in \bigcup_{i \geq n_0} A_i$
  converging to $x_\infty$.
  As the closed set $\bigcup_{i=n_0}^n A_i$ does not contain $x_\infty$,
 it contains $y_n$ for finitely many $n$'s.
  So we have a strictly increasing sequence $(m_n)_n$ such that $y_m
 \notin \bigcup_{i=n_0}^n A_i$ for all $m \geq m_n$.
  We inductively define $x_0:=y_0$, $\xi(0):=\min\{i \geq n_0 \,|\, x_0
 \in A_i\}$,
  $x_{k+1}:=y_{m_{\xi(k)}}$ and $\xi(k+1):=\min\{i \geq n_0 \,|\,
 x_{k+1} \in A_i\}$.
  Clearly $\xi(k+1)>\xi(k)$ and thus $m_{\xi(k+1)} > m_{\xi(k)}$.
  So $(x_k)_k$ converges to $x_\infty$.
 \end{proof}

Recall that $\mathrm{ACC}_m$ is the problem of finding an element in a given
 closed set $A$ in the family $\{ M, M \setminus \{i\} \,|\, i \in \NN
 \}$, where $M=\{0,\dotsc,m-1\}$. Note that $\mathrm{ACC}_2 \equivW \llpo$.
  A computably admissible representation $\psi$ of this family is given
 by
  \[
   \psi(0^\omega):= M \quad\text{and}\quad
 \psi\big(0^j(i+1)0^\omega\big):= M \setminus \{i\} \,.
  \]

 \begin{theorem}
 \label{theo:FUweak}
  Let $\XX$ be a admissibly represented Fr{\'e}chet-Urysohn space and
 let $m\geq 2$.
  Then $\mathrm{ACC}_m \nleqW \VC_\XX$.
 \end{theorem}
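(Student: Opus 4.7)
The plan is to argue by contradiction: a hypothetical continuous Weihrauch reduction will force a single natural number $k \in M$ to simultaneously avoid every $i \in M$. Suppose continuous $K$ (preprocessor) and $H$ (postprocessor) witness $\mathrm{ACC}_m \leqW \VC_\XX$. Writing $B_\infty \in \mathcal{V}(\XX)$ for the set named by $K(0^\omega)$ and $B_j^{(i)} \in \mathcal{V}(\XX)$ for the set named by $K(0^j(i+1)0^\omega)$, the fact that $0^j(i+1)0^\omega \to 0^\omega$ in $\Baire$ as $j\to\infty$ together with continuity of $K$ gives $B_j^{(i)} \to B_\infty$ in $\mathcal{V}(\XX)$ for each $i \in \{0,\dotsc,m-1\}$.

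First I would pick any $x_\infty \in B_\infty$ and, for each $i$ separately, apply Lemma~\ref{l:charact:convrel:FUspace} with $\varphi = \id_\NN$ to obtain a strictly increasing $\xi_i\colon\NN\to\NN$ and points $x_n^{(i)} \in B_{\xi_i(n)}^{(i)}$ with $x_n^{(i)} \to x_\infty$ in $\XX$. Since $m$ is finite, the interleaved sequence defined by $y_{mn+i} := x_n^{(i)}$ still converges to $x_\infty$ in $\XX$. Admissibility of $\delta_\XX$ then lets me lift this single convergence to the representation level: there exist names $s_\ell$ of $y_\ell$ and a name $q_\infty$ of $x_\infty$ with $s_\ell \to q_\infty$ in $\Baire$. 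In particular, for every fixed $i$, $(0^{\xi_i(n)}(i+1)0^\omega,\, s_{mn+i}) \to (0^\omega, q_\infty)$ in $\Baire \times \Baire$ as $n\to\infty$.

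To close the argument, $H(0^\omega, q_\infty)$ must name some $k \in M$, since this is a valid answer to $\mathrm{ACC}_m$ on input $M$. Continuity of $H$ combined with the discreteness of the standard representation of $\NN$ then forces $H(0^{\xi_i(n)}(i+1)0^\omega, s_{mn+i})$ to name the same $k$ for all sufficiently large $n$; but this output must be a valid answer to $\mathrm{ACC}_m$ on input $M\setminus\{i\}$, so $k \neq i$. Since this holds for every $i \in \{0,\dotsc,m-1\}$, we conclude $k \notin M$, contradicting $k \in M$ and ruling out the reduction. The delicate step is the coherent name-lifting above: I need a \emph{single} name $q_\infty$ of $x_\infty$ that is simultaneously the $\Baire$-limit of names for every family $(x_n^{(i)})_n$, and this is exactly what admissibility together with the finiteness of $m$ (via interleaving into one convergent sequence in $\XX$) provides.
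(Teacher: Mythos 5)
Your proof is correct and follows essentially the same route as the paper: apply Lemma~\ref{l:charact:convrel:FUspace} to the sets named by $K(0^j(i+1)0^\omega)$, interleave the finitely many approximating sequences into one sequence converging to $x_\infty$, lift it to names via admissibility, and use continuity of $H$ into discrete $\NN$ to pin down the output. The only cosmetic difference is that you rule out $k=i$ for every $i\in M$, whereas the paper specializes at once to $i=b:=H(0^\omega,s_\infty)$ to get the contradiction.
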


 \begin{proof}

  Assume, there were continuous functions $K\colon \dom(\psi) \to
 \mathcal{V}(\XX)$ and $H\colon \dom(\psi) \times \dom(\delta_\XX) \to
 \NN$ witnessing $\mathrm{ACC}_m \leqW \VC_\XX$.
  \\
  We choose some $x_\infty \in K(0^\omega)$.
  Let $a \in M$.
  Since $K\big(0^n(a+1)0^\omega \big)$ converges to $K(0^\omega)$,
  by the above lemma there is a strictly increasing function
 $\xi_a\colon \NN \to \NN$ and a sequence $(y_{a,n})_n$
  converging to $x_\infty$ such that $y_{a,n} \in K\big(
 0^{\xi_a(n)}(a+1)0^\omega \big)$.
  The sequence $(x_n)_n:=(y_{n \,\mathrm{mod}\, m,n \,\mathrm{div}\,
 m})_n$ converges to $x_\infty$ as well,
  because $M$ is finite.
  Since $\delta_\XX$ is admissible,
  there is a sequence $(s_n)_n$ converging to some name $s_\infty$ of
 $x_\infty$ such that $\delta_\XX(s_n)=x_n$.
  Now we consider $b:=H(0^\omega,s_\infty)$.
  For almost all $n$ we have
   $H\big( 0^{\xi_b(n)}(b+1)0^\omega,s_{mn+b} \big)=b$
   and $\delta_\XX(s_{mn+b})=y_{b,n} \in K\big(
 0^{\xi_b(n)}(b+1)0^\omega  \big)$,
  contradicting $b \notin \psi\big( 0^{\xi_b(n)}(b+1)0^\omega \big)=M
 \setminus \{b\}$.
 \end{proof}

\subsection{Non-Fr{\'e}chet-Urysohn spaces}

Now we turn our attention to non-Fr{\'e}chet-Urysohn $T_1$-spaces.
First we show that overt choice on them is above $\lpo$ in the continuous Weihrauch lattice.

\begin{theorem}\label{th:lpo:VCNonFU}
 Let $\YY$ be an admissibly represented space such that its topology is $T_1$, but not Fr{\'e}chet-Urysohn.
 Then $\lpo \leqWcont \VC_\YY$.
\end{theorem}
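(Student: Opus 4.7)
The plan is to exploit the failure of Fr\'echet-Urysohnness of $\YY$ to encode the discontinuity of $\lpo$ into the overt choice principle $\VC_\YY$. Fix $M \subseteq \YY$ and $y_\infty \in \overline{M}$ witnessing non-Fr\'echet-Urysohnness, so that no sequence from $M$ converges to $y_\infty$; the $T_1$-hypothesis then forces $y_\infty \notin M$. Continuity of $\delta_\YY$ upgrades this into a topological separation in $\Baire$: any sequence in $\delta_\YY^{-1}(M)$ converging pointwise to a name of $y_\infty$ would, via $\delta_\YY$, yield a sequence in $M$ converging to $y_\infty$. Hence $\delta_\YY^{-1}(y_\infty)$ is disjoint from $\overline{\delta_\YY^{-1}(M)}$ in $\Baire$, so the open set $W := \Baire \setminus \overline{\delta_\YY^{-1}(M)}$ covers every name of $y_\infty$ and avoids every name of any point of $M$. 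Write $W = \bigcup_{i} w_i \Baire$ as a union of basic cylinders.

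For the preprocessor $K \colon \Baire \to \mathcal{V}(\YY)$, extract a countable sequence $(m_i)_{i \in \NN} \subseteq M$ with $y_\infty \in \overline{\{m_i : i \in \NN\}}$; such a sequence exists by the countable tightness of qcb-spaces, and can be built concretely by enumerating chosen representatives $m_w \in \delta_\YY(w\Baire) \cap M$ for those $w \in \NN^*$ with $w\Baire \cap \delta_\YY^{-1}(M) \neq \emptyset$, using that every open $U \ni y_\infty$ and every $m \in U \cap M$ admit a prefix $w$ of a name of $m$ short enough that $\delta_\YY(w\Baire) \subseteq U$. Define $K(r) := \{y_\infty\}$ when $\lpo(r) = 0$ and $K(r) := \{m_0, m_1, \dotsc, m_{N_r}\}$ when $\lpo(r) = 1$, where $N_r$ denotes the least $n$ with $r(n) = 0$. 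Each $K(r)$ is closed, and continuity of $K$ amounts to checking that $\{r : K(r) \cap U \neq \emptyset\}$ is open in $\Baire$ for each $U \in \mathcal{O}(\YY)$: when $y_\infty \in U$ and $i_U$ denotes the least index with $m_{i_U} \in U$, this set equals the clopen $\{r : r(0), \dotsc, r(i_U-1) \neq 0\}$; when $y_\infty \notin U$ openness is immediate by inspection.

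The postprocessor $H(r, q)$ runs two semi-decisions in parallel: upon witnessing some $n$ with $r(n) = 0$ it outputs $1$, and upon seeing a prefix of $q$ appear among the $w_i$ it outputs $0$. If $\lpo(r) = 1$, the first process succeeds at the first zero of $r$, while the realizer's output $q$ lies in $\delta_\YY^{-1}(K(r)) \subseteq \delta_\YY^{-1}(M)$, disjoint from $W$, so the second process never fires. If $\lpo(r) = 0$, then $q \in \delta_\YY^{-1}(y_\infty) \subseteq W = \bigcup_i w_i \Baire$, so some prefix of $q$ eventually appears among the $w_i$, while no zero ever occurs in $r$. Either way $H$ correctly commits to $\lpo(r)$ at a finite stage.

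The main obstacle is ensuring the continuity of $K$ in the face of an apparent tension: demanding $K(r) = \{y_\infty\}$ on $\{r : \lpo(r) = 0\}$ while $K(r) \subseteq M$ elsewhere, under continuity, would na\"ively require a sequence in $M$ to converge to $y_\infty$ in $\YY$ --- precisely what non-Fr\'echet-Urysohnness forbids. The resolution is that convergence in the positive-information topology on $\mathcal{V}(\YY)$ is much weaker than pointwise convergence in $\YY$: for $K(r)$ to approach $\{y_\infty\}$ it suffices that each neighbourhood of $y_\infty$ eventually be intersected by $K(r)$, a condition that the growing finite family $\{m_0, \dotsc, m_{N_r}\}$ satisfies by virtue of $y_\infty \in \overline{\{m_i\}}$, without ever forming a genuinely convergent sequence in $M$.
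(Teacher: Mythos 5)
Your construction is correct, and its core is the same as the paper's: you realize $\lpo$ by feeding overt choice either $\{y_\infty\}$ (if $r$ has no zero) or the finite truncation $\{m_0,\dotsc,m_{N_r}\}$ of a countable subset of $M$ clustering at $y_\infty$, exploiting precisely the point you emphasize at the end --- these finite sets converge to $\{y_\infty\}$ in the lower Vietoris topology on $\mathcal{V}(\YY)$ even though no sequence of points of $M$ converges to $y_\infty$ (your cylinder-representative family $(m_i)$ plays the role of the paper's dense sequence $(z_i)$ in $M$). The differences are in implementation, and both of your variants check out. For the preprocessor, the paper lifts the convergent sequence of closed sets to a convergent sequence of names using the admissibility of the positive representation of closed sets with respect to the lower Fell topology, and defines $K$ directly on names; you instead verify that preimages of the subbasic sets $\{A : A \cap U \neq \emptyset\}$ are open in $\Baire$, which is fine, but be aware that the step ``continuity of $K$ amounts to checking this'' is exactly where that same admissibility fact enters (sequential continuity of $r \mapsto K(r)$ into $\mathcal{V}(\YY)$, whose topology is the sequentialization of the lower Vietoris topology, yields a continuous realizer), so it should be stated or cited rather than treated as the definition; also make explicit that $T_1$ is what makes $\{y_\infty\}$ and your finite sets closed, i.e.\ legitimate elements of $\mathcal{V}(\YY)$. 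For the postprocessor, the paper decides from the output name alone whether the chosen point is $y$ or some $z_i$, via sequential clopenness of $\{y\}$ in the subspace $\{y\}\cup\{z_i \mid i \in \NN\}$ (using $T_1$ and admissibility of $\delta_\YY$); you instead use metrizability of $\Baire$ to get the open set $W$ separating $\delta_\YY^{-1}(y_\infty)$ from $\overline{\delta_\YY^{-1}(M)}$ and run two parallel semi-decisions on the pair $(r,q)$. This buys a postprocessor needing nothing beyond continuity of $\delta_\YY$, at the cost of also consulting the original input $r$ --- which the definition of Weihrauch reducibility permits, so there is no gap there.
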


\begin{proof}
 We choose a subset $M$ such that on the one hand there is some $y$ in the closure of $M$,
 but on the other hand $y$ is not the limit of any sequence in $M$.
 By \cite[Proposition 3.3.1]{schroder5}, $M$ equipped with the subsequential topology contains a dense sequence $(z_i)_i$.
 For $n \in \NN$ we define the closed set $A_n$ by $\{ z_i \,|\, i \leq n \}$.
 Then $(A_n)_n$ converges to $A_\infty:=\{y\}$ in the lower Fell topology.
 Since the standard positive representation of closed is admissible w.r.t.\ the lower Fell topology by \cite[Proposition 4.4.5]{schroder5},
 there is a sequence $(p_n)_n$ of names for the $A_n$'s converging to some name $p_\infty$ of $A_\infty$.
 We define a continuous function $K\colon \Baire \to \Baire$ by
 \[
  K(r):=\left\{
  \begin{array}{ll}
    p_\infty & \text{if $r$ does not contain $0$}
    \\
    p_m & \text{if $m:=\min\big\{k \in \NN \,\big|\, r(k)=0\big\}$ exists.}
  \end{array}\right.
 \]
 The singleton $\{y\}$ is sequentially open
 in the subspace of $\YY$ with underlying set $\{y, z_i \,|\, i \in \NN\}$,
 because no sequence in $\{ z_i \,|\, i \in \NN\}$ converges to $y$.
 By the $T_1$-property, $\{y\}$ is even clopen.
 So there is continuous function $H:\subseteq \Baire \to \{0,1\}$ such that
 \[
   \delta_\YY(r)=y \implies H(r)=1
   \quad\text{and}\quad
   \delta_\YY(r)\in \{z_i\,|\, i \in \NN\} \implies H(r)=0 \,.
 \]
 Clearly, for any realizer $G$ of overt choice on $\YY$
 the function $HGK$ is a realizer for $\lpo$.
 Hence $\lpo \leqsWcont \VC_\YY$.
\end{proof}

We present an example of a CoPolish non-Fr{\'e}chet-Urysohn space $\Smin$ for which overt choice is Weihrauch equivalent to $\lpo$.

\begin{example}
 We choose
 $\{ (\infty,\infty) \} \cup ( \NN \times \{\infty\} ) \cup \NN^2$
 as the underlying set of $\Smin$.
 The topology of $\Smin$ is induced by the basis consisting of the sets
 \begin{itemize}
   \item $\{(a,b)\}$,
   \item $\{(a,\infty)\} \cup \{ (a,j) \,|\,  j \geq b \}$,
   \item
     $\{(\infty,\infty)\} \cup \{(i,\infty) \,|\, i\geq a\} \cup \{ (i,j) \,|\, i\geq a, j \geq \ell(i) \}$
 \end{itemize}
 for all $a,b \in \NN$, $\ell \in \Baire$.
 The space is not Fr{\'e}chet-Urysohn, because $(\infty,\infty)$ belongs to the closure of $\NN^2$, but fails to be a limit of any sequence in $\NN^2$.
 An admissible representation $\delta_\Smin$ of $\Smin$ has
 $\big\{i0^\omega, i0^j1^\omega \,\big|\, i,j \in \NN \big\}$ as its locally compact domain
 and is defined by
 \[
    \delta_\Smin(00^\omega):=(\infty,\infty),\,
    \delta_\Smin(00^a1^\omega):=\delta_\Smin\big((a+1)0^\omega\big):=(a,\infty),\,
    \delta_\Smin\big((a+1)0^b1^\omega\big):=(a,b)
  \]
 for all $a,b \in \NN$.
 A proof of admissibility can be found in \cite[Example 2.3.15]{schroder5}.
 The space $\Smin$ is the direct limit of its compact subspaces
 $\XX_m=\{ (\infty,\infty), (a,\infty), (i,b) \,|\, a,b \in \NN, i \leq m\}$.
 Hence $\Smin$ is CoPolish.
\end{example}

We remark that any CoPolish space that is not Fr{\'e}chet-Urysohn contains a copy of $\Smin$ as a closed subspace.
Now we show that overt choice on $\Smin$ is Weihrauch-equivalent to $\lpo$.

\begin{theorem}
 $\VC_\Smin \equivW \lpo$.
\end{theorem}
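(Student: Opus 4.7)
The plan is to prove the two reductions $\lpo \leqW \VC_\Smin$ and $\VC_\Smin \leqW \lpo$ separately.

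For the reduction $\lpo \leqW \VC_\Smin$, I would apply Theorem~\ref{th:lpo:VCNonFU}, noting that $\Smin$ is an admissibly represented $T_1$-space (indeed Hausdorff) that is not Fr\'echet-Urysohn, with witness $M := \NN^2$ and $y := (\infty,\infty)$. To refine the continuous reduction supplied by that theorem to a Weihrauch reduction, I instantiate the proof with a standard computable enumeration $(z_i)_{i\in\NN}$ of $\NN^2$. The finite sets $A_n := \{z_0,\dotsc,z_n\}$ and $A_\infty := \{(\infty,\infty)\}$ then admit uniformly computable positive names $p_n$ converging computably to a computable name $p_\infty$ of $A_\infty$, so the preprocessing $K$ sending $r \in \Baire$ to $p_m$ for the first $0$-position $m$ (or to $p_\infty$ if none exists) is computable. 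For the postprocessing, the key observation is that $\delta_\Smin^{-1}\{(\infty,\infty)\} = \{00^\omega\}$ while $\delta_\Smin^{-1}\{(a,b)\} = \{(a+1)0^b1^\omega\}$ for $(a,b) \in \NN^2$; hence the map $H(s) = 1$ if $s(0) = 0$ and $H(s) = 0$ otherwise is a total computable function that correctly distinguishes $\delta_\Smin$-names of $(\infty,\infty)$ from those of points in $\NN^2$. Consequently $HGK$ realizes $\lpo$ whenever $G$ realizes $\VC_\Smin$, and we even obtain $\lpo \leqsW \VC_\Smin$.

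For the reduction $\VC_\Smin \leqW \lpo$, let $U := \Smin \setminus \{(\infty,\infty)\}$. I would first verify that $U$ is computably open in $\Smin$ as the computable countable union of the basic open sets $\{(a,b)\}$ for $(a,b)\in\NN^2$ and $\{(a,\infty)\} \cup \{(a,j) : j\geq 0\}$ for $a\in\NN$. As a subspace, $U$ decomposes as the topological disjoint union $\bigsqcup_{a\in\NN} C_a$, where each column $C_a := \{(a,\infty)\} \cup \{(a,b): b\in\NN\}$ is clopen in $U$ and is a computable copy of the one-point compactification $\omega+1$ of $\NN$. Thus $U$ is a computable Polish space (computably homeomorphic to $\NN \times (\omega+1)$), and Theorem~\ref{theo:overtchoicequasipolish} yields that $\VC_U$ is computable. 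The reduction then proceeds as follows: from a positive name of $A \in \mathcal{V}(\Smin)$, compute in parallel a positive name of $A \cap U \in \mathcal{V}(U)$ and a sequence $r \in \Baire$ whose $k$-th entry is $0$ as soon as some basic open of $U$ has been witnessed to intersect $A$, and $1$ otherwise, so that $\lpo(r) = 1 \iff A \cap U \neq \emptyset$. If $\lpo(r) = 0$, then $A = \{(\infty,\infty)\}$ and we output the name $00^\omega$; if $\lpo(r) = 1$, apply the computable $\VC_U$ to $A\cap U$ and lift the resulting $\delta_U$-name to a $\delta_\Smin$-name along the computable inclusion $U \hookrightarrow \Smin$.

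The main technicality is the verification that $U$ carries the claimed computable Polish structure, so that Theorem~\ref{theo:overtchoicequasipolish} is applicable, and that the standard representation of $\mathcal{V}(\Smin)$ interacts computably with restriction to the computably open subspace $U$. Once these representation-level facts are settled, both reductions reduce to effectivizations of arguments already available in the paper.
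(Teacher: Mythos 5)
Your proposal is correct and follows essentially the same route as the paper's own proof: the forward direction effectivizes Theorem~\ref{th:lpo:VCNonFU} using an enumeration of finite subsets of $\NN^2$ converging to $\{(\infty,\infty)\}$ with the postprocessor $H(s)=1$ iff $s(0)=0$, and the backward direction uses one call to $\lpo$ to test whether $A$ meets the computably open set $\Smin\setminus\{(\infty,\infty)\}$ and then applies computable overt choice on that computable Polish subspace. The only (inessential) difference is that you invoke Theorem~\ref{theo:overtchoicequasipolish} via the explicit identification of the subspace with $\NN\times(\omega+1)$, whereas the paper cites the Polish-space result of \cite{presser} directly.
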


\begin{proof}
 For the direction $\lpo \leqW \VC_\Smin$ we effectivize the proof of Theorem~\ref{th:lpo:VCNonFU}.
 We choose a computable pairing function $\langle \cdot,\cdot\rangle$
 and define $A_n:= \big\{ (a,b) \,\big|\, \langle a,b\rangle \leq n \big\}$.
 Clearly, the sequence $(A_n)_n$ converges effectively to
 the singleton $\{(\infty,\infty)\}$ in the space of closed sets with the positive representation.
 So we have a computable sequence $(p_n)_n$ of names of the $A_n$'s
 which converges computably to a name $p_\infty$ of $\{(\infty,\infty)\}$.
 Therefore the preprocessor $K$ defined like in the proof of Theorem~\ref{th:lpo:VCNonFU}
 is computable.
 The computable postprocessor $H\colon \dom(\delta_{\Smin}) \to \{0,1\}$
 can be defined by $H(s)=1 :\Longleftrightarrow s(0)=0$.
 Clearly, $K$ and $H$ witness $\lpo \leqsW \VC(\Smin)$.
\medskip\\
 To show $\VC_\Smin \leqW \lpo$, we first employ $\lpo$ to decide whether or not the computably open set $\XX_\omega:=\Smin \setminus \{(\infty,\infty)\}$ intersects the given non-empty closed set $A$.
 If not, then we output the element $(\infty,\infty)$.
 In the positive case we employ the fact that
 the space $\XX_\omega$ forms a computable Polish space.
 Moreover, we can compute a name of $A \cap \XX_\omega$ in the positive representation for the closed subsets of $\XX_\omega$.
 Hence the algorithm from \cite{presser} computes for us an element
 of $A \cap \XX_\omega \neq \emptyset$.
\end{proof}

\subsection{Upper bound for overt choice on coPolish spaces}

Let the space $\Sigma_{\Pi^1_1}$ have the two elements $\top$ and $\bot$, with $p \in \Baire$ being a name for $\top$ iff $p$ codes a well-founded tree, and a name for $\bot$ iff it codes an ill-founded tree. The map $\id : \Sigma_{\Pi^1_1} \to \Sigma$ essentially lets us treat a single $\Pi^1_1$-set as an open set. Alternatively, we can view $\id : \Sigma_{\Pi^1_1} \to \Sigma$ as testing whether some $A \in \mathcal{A}(\Baire)$ is empty. We find that $\left (\id : \Sigma_{\Pi^1_1} \to \Sigma \right ) \leqW \mathrm{TC}_\Baire$ for the principle $\mathrm{TC}_\Baire$ introduced and studied in \cite{pauly-kihara4}, and that $\Pi^1_1\mathrm{CA} \equivW \lim \star \widehat{\left (\id : \Sigma_{\Pi^1_1} \to \Sigma\right)}$. In particular, it holds that $\left (\id : \Sigma_{\Pi^1_1} \to \Sigma \right ) \nleqW \C_\Baire$.

\begin{theorem}
\label{theo:copolishupper}
Let $\mathbf{X}$ be coPolish. Then $\VC_\mathbf{X} \leqWcont \widehat{\left (\id : \Sigma_{\Pi^1_1} \to \Sigma\right)}$.
\begin{proof}
We use the characterization of $\mathbf{X}$ as being a direct limit of a sequence of compact Polish spaces $\mathbf{K}_0 \embeds \mathbf{K}_1 \embeds \ldots$. Given some basic open set $\sigma$ of $\mathbf{K}_\ell$ and $f \in \Baire$, we inductively define $U^0(\sigma, f) = \sigma$ and $U^{n+1}(\sigma,f) = \{x \in \mathbf{K}_{\ell+n+1} \mid d(x,U^n(\sigma,f)) < 2^{-f(n)}\}$. Then let $U(\sigma,f) \subseteq \mathbf{X}$ be the corresponding direct limit. Note that $U(\sigma,f)$ is open in $\mathbf{X}$, and that the sets of the form $U(\sigma,f)$ form a basis of $\mathbf{X}$.

Now given some $A \in \mathcal{V}(\mathbf{X})$, we find that $\sigma \cap A \neq \emptyset$ iff $\forall f \in \Baire \ U(\sigma,f) \cap A \neq \emptyset$. Since $U(\sigma,f) \cap A \neq \emptyset$ is an open property, we will recognize that it holds true for some $f$ based on some finite prefix of $f$. From $A$ and $\sigma$ we can thus construct a tree $T$ such that the paths through $T$ are exactly those $f$ with $A \cap U(\sigma,f) = \emptyset$. Thus, we can use $\widehat{\left (\id : \Sigma_{\Pi^1_1} \to \Sigma\right)}$ to obtain a list of all basic open sets in any $\mathbf{K}_i$ intersecting $A$. Once we have identified a $\mathbf{K}_i$ with $\mathbf{K}_i \cap A \neq \emptyset$, this lets us obtain $\mathbf{K}_i \cap A \in \mathcal{V}(\mathbf{K}_i)$, from which we can compute a point $x \in \mathbf{K}_i \cap A$ since overt choice on Polish spaces is computable. We then translate $x \in \mathbf{K}_i$ into $x \in \mathbf{X}$.
\end{proof}
\end{theorem}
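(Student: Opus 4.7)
The plan is to exploit the characterization of coPolish spaces as direct limits of an increasing sequence of compact Polish subspaces $\mathbf{K}_0 \hookrightarrow \mathbf{K}_1 \hookrightarrow \cdots$. Given $A \in \mathcal{V}(\mathbf{X})$, the strategy is to identify some level $\ell$ at which $A \cap \mathbf{K}_\ell \neq \emptyset$, extract $A \cap \mathbf{K}_\ell$ as an overt subset of the Polish space $\mathbf{K}_\ell$, and invoke the fact that overt choice is computable on Polish spaces (via the algorithm of \cite{presser}) to produce a point, which we then push forward into $\mathbf{X}$.

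The key technical step is to express, in a way accessible to $\widehat{\id : \Sigma_{\Pi^1_1} \to \Sigma}$, the statement ``$\sigma \cap A \neq \emptyset$ in $\mathbf{K}_\ell$'' for each basic open $\sigma$ of $\mathbf{K}_\ell$. The intended trick is that, because the embedding $\mathbf{K}_\ell \hookrightarrow \mathbf{X}$ need not be ``overtly closed'', overtness information about $A$ in $\mathbf{X}$ only tells us when open sets \emph{of $\mathbf{X}$} intersect $A$; so we must approximate $\sigma$ from outside by genuine open sets in $\mathbf{X}$. I would do this by defining, for each $f \in \Baire$, a shrinking chain of $2^{-f(n)}$-neighborhoods of $\sigma$ taken in successive $\mathbf{K}_{\ell+n}$'s, and letting $U(\sigma,f)$ be their union, which is open in $\mathbf{X}$ and contains $\sigma$. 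The claim to verify, using compactness of $\mathbf{K}_\ell$ and metrizability, is that $\sigma \cap A \neq \emptyset$ in $\mathbf{K}_\ell$ if and only if $U(\sigma,f) \cap A \neq \emptyset$ in $\mathbf{X}$ for every $f \in \Baire$.

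Granting this equivalence, from $A$ one continuously builds, for each pair $(\sigma,\ell)$, a tree $T_{\sigma,\ell}$ whose infinite branches are precisely those $f$ with $U(\sigma,f) \cap A = \emptyset$; well-foundedness of $T_{\sigma,\ell}$ thus encodes $\sigma \cap A \neq \emptyset$ as a $\Pi^1_1$-in-the-input fact. Feeding the countable family $(T_{\sigma,\ell})$ into $\widehat{\id : \Sigma_{\Pi^1_1} \to \Sigma}$ returns, in the form of Sierpi\'nski-positive information, the collection of $(\sigma,\ell)$ for which $\sigma$ actually meets $A$ inside $\mathbf{K}_\ell$. Including $\sigma = \mathbf{K}_\ell$ itself in the family, we will eventually see a pair certifying $A \cap \mathbf{K}_\ell \neq \emptyset$; from that moment on the enumeration of $\sigma$'s at this fixed $\ell$ is precisely a positive presentation of $A \cap \mathbf{K}_\ell \in \mathcal{V}(\mathbf{K}_\ell)$, so overt choice on $\mathbf{K}_\ell$ delivers the desired point.

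The main obstacle I expect is the verification of the equivalence in the second paragraph: the forward direction is immediate since $\sigma \subseteq U(\sigma,f)$, but the converse requires that any point of $A$ lying in \emph{every} $U(\sigma,f)$ actually belongs to $\sigma$ as a subset of $\mathbf{K}_\ell$. This is where one exploits the compact metric structure of the $\mathbf{K}_i$'s together with the direct-limit topology: any such common point must lie in each $\mathbf{K}_{\ell+n}$ at distance at most $2^{-f(n)}$ from $\sigma$, and by a diagonal/compactness argument one can choose $f$ so that this forces the point into $\overline{\sigma}\cap \mathbf{K}_\ell$, and then refining $\sigma$ through a neighborhood base concludes. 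Everything else --- open-set arithmetic, admissibility of the representations, and invoking the Polish-space algorithm --- is routine and purely continuous, so the reduction will be topological rather than computable, matching the $\leqWcont$ in the statement.
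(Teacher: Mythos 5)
Your proposal is correct and follows essentially the same route as the paper: the same direct-limit decomposition into compact Polish pieces, the same iterated $2^{-f(n)}$-neighborhood sets $U(\sigma,f)$ with the equivalence $\sigma \cap A \neq \emptyset \Leftrightarrow \forall f\, U(\sigma,f)\cap A \neq \emptyset$, the same tree construction feeding $\widehat{\left(\id : \Sigma_{\Pi^1_1} \to \Sigma\right)}$, and the same finish via computable overt choice on the Polish level followed by inclusion into $\mathbf{X}$. The compactness verification you flag as the main obstacle is exactly the step the paper leaves implicit, and your sketch of it is sound.
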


\subsection{Summary}

We obtain the following corollary to Theorems~\ref{th:FUCoPol:lpo}, \ref{theo:FUweak}, \ref{th:lpo:VCNonFU} and \ref{theo:copolishupper}. It shows that the topological Weihrauch degree of overt choice for a CoPolish space characterizes whether or not the space is countably-based, and whether or not the space has the Fr\'echet-Urysohn property:

\begin{corollary}
\label{corr:copolishsummary}
For a CoPolish space $\mathbf{X}$ exactly one of the following cases holds:
\begin{enumerate}
\item $\XX$ is Polish and $\VC_\XX$ is continuous.
\item $\XX$ is not countably-based, Fr\'echet-Urysohn, and $\mathrm{ACC}_\mathbb{N} \leqWcont \VC_\XX \leWcont \lpo$.
\item $\XX$ is not Fr\'echet-Urysohn, and $\lpo \leqWcont \VC_\XX \leqWcont \widehat{\left (\id : \Sigma_{\Pi^1_1} \to \Sigma\right)}$.
\end{enumerate}
\end{corollary}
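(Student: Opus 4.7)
The plan is to verify that the three cases are mutually exclusive and exhaustive, and then combine the four cited theorems, one case at a time. Every CoPolish space is Hausdorff and hence $T_1$, so I would partition on whether $\XX$ is Fr\'echet-Urysohn and, within the Fr\'echet-Urysohn case, on whether $\XX$ is countably-based. A countably-based CoPolish space is locally compact Hausdorff by Lemma~\ref{l:Copolish:vs:localcompact}; being also second countable it is Polish, and countably-based spaces are automatically Fr\'echet-Urysohn. This yields the three disjoint cases, with ``$\XX$ is Polish'' in Case~1.

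Case~1 is immediate from Presser's result \cite{presser} that overt choice is continuous on Polish spaces (as already invoked in the proof of Theorem~\ref{th:FUCoPol:lpo}).

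For Case~2, the upper bound $\VC_\XX \leqWcont \lpo$ is Theorem~\ref{th:FUCoPol:lpo}. The lower bound $\mathrm{ACC}_\NN \leqWcont \VC_\XX$ is obtained by combining three ingredients: Lemma~\ref{l:FUCoPol:props}(3), which yields a closed topological embedding $\Tmin \embeds \XX$ (available because $\XX$ is not countably-based, so $\XX_{\mathrm{nc}} \neq \emptyset$); the continuous counterpart of the earlier proposition that $\VC$ is monotone under closed subspaces; and Proposition~\ref{prop:accNtmin} giving $\mathrm{ACC}_\NN \leqsW \VC_\Tmin$. Strictness $\VC_\XX \leWcont \lpo$ follows from Theorem~\ref{theo:FUweak} applied with $m=2$: since $\mathrm{ACC}_2 \equivW \llpo \leqW \lpo$, any reduction $\lpo \leqWcont \VC_\XX$ would force $\mathrm{ACC}_2 \leqWcont \VC_\XX$, contradicting Theorem~\ref{theo:FUweak}.

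For Case~3, the lower bound $\lpo \leqWcont \VC_\XX$ is Theorem~\ref{th:lpo:VCNonFU}, applicable because any CoPolish space is Hausdorff and hence $T_1$. The upper bound $\VC_\XX \leqWcont \widehat{\left(\id : \Sigma_{\Pi^1_1} \to \Sigma\right)}$ is exactly Theorem~\ref{theo:copolishupper}. The only mildly delicate step in the whole argument is in Case~2, where the topological embedding from Lemma~\ref{l:FUCoPol:props}(3) has to be promoted to a continuous Weihrauch reduction $\VC_\Tmin \leqWcont \VC_\XX$; this is handled as in the earlier closed-subspace reduction result, using the continuous map $\mathcal{O}(\XX) \to \mathcal{O}(\Tmin)$ dual to the embedding to transport an overt presentation of $A \subseteq \Tmin$ to one of its image in $\XX$, after which any element of $A$ returned by $\VC_\XX$ lies in the closed image and can be pulled back.
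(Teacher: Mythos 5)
Your proof is correct and follows essentially the same route as the paper, which derives the corollary by exactly this case split (Polish / Fr\'echet-Urysohn non-countably-based / non-Fr\'echet-Urysohn) from Theorems~\ref{th:FUCoPol:lpo}, \ref{theo:FUweak}, \ref{th:lpo:VCNonFU} and \ref{theo:copolishupper}, together with the closed embedding of $\Tmin$ and Proposition~\ref{prop:accNtmin} for the $\mathrm{ACC}_\NN$ lower bound. Your explicit handling of the details the paper leaves implicit (promoting the topological closed embedding to a continuous reduction $\VC_\Tmin \leqWcont \VC_\XX$, and obtaining strictness below $\lpo$ via Theorem~\ref{theo:FUweak} with $m=2$) matches the intended argument.
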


How much the Fr\'echet-Urysohn property fails for a sequential space can be characterized by the ordinal invariant $\sigma$ defined in \cite{arhangelskii}. $\sigma$ specifies how many times you need to iterate sequential closures to get the closure of an arbitrary subset. We wonder whether a more precise classification of overt choice for CoPolish spaces might be achievable depending on $\sigma$. Note that the interval of the Weihrauch lattice we know $\VC_\XX$ to fall into in this case contains the Baire hierarchy. 



\bibliographystyle{eptcs}
\bibliography{references}
\end{document}